\newtheorem{theorem}{Theorem}[section]
\newtheorem{lemma}[theorem]{Lemma}
\newtheorem{corollary}[theorem]{Corollary}
\newtheorem{proposition}[theorem]{Proposition}
\newtheorem{remark}[theorem]{Remark}
\newcommand{\bi}{\begin{itemize}}
\newcommand{\ei}{\end{itemize}}
\newcommand{\be}{\begin{equation}}
\newcommand{\ee}{\end{equation}}
\newcommand{\bdm}{\begin{displaymath}}
\newcommand{\edm}{\end{displaymath}}
\newcommand{\ba}{\begin{array}}
\newcommand{\ea}{\end{array}}
\newcommand{\ud}{\textrm{d}}
\newcommand{\dt}{\partial _{t}}
\newcommand{\dx}{\partial _{x}}
\newcommand{\dz}{\partial _{z}}
\newcommand{\R}{\mathbb{R}}
\newcommand{\Z}{\mathbb{Z}}
\newcommand{\N}{\mathbb{N}}
\newcommand{\C}{\mathcal{C}}
\newcommand{\PP}{\mathcal{P}}
\newcommand{\Ecal}{\mathcal{E}}
\newcommand{\Rcal}{\mathcal{R}}
\newcommand{\dv}{\partial _{v}}
\newcommand{\du}{\partial _{u}}
\newcommand{\dw}{\partial _{w}}
\newcommand{\eps}{\varepsilon}
\newcommand{\ueps}{u^{\varepsilon}}
\newcommand{\veps}{v^{\varepsilon}}
\def\signff{\bigskip\bigskip\hspace{80mm}
\vbox{{\sc Francis Filbet\par\vspace{3mm}
Universit\'e de Lyon,\par
UL1, INSAL, ECL, CNRS \par 
UMR5208, Institut Camille Jordan,\par
43 boulevard 11 novembre 1918,\par
F-69622 Villeurbanne cedex,  FRANCE
\par\vspace{3mm}e-mail:} filbet@math.univ-lyon1.fr }}
\def\signar{\bigskip\bigskip\hspace{80mm}
\vbox{{\sc Am\'elie Rambaud\par\vspace{3mm}
Universit\'e de Lyon,\par
UL1, INSAL, ECL, CNRS \par 
UMR5208, Institut Camille Jordan,\par
43 boulevard 11 novembre 1918,\par
F-69622 Villeurbanne cedex,  FRANCE
\par\vspace{3mm}e-mail:} rambaud@math.univ-lyon1.fr }}
\begin{document}

\title[An  Asymptotic Preserving Scheme for Relaxation Systems]
{Analysis of an Asymptotic Preserving Scheme for Relaxation Systems}\thanks{F. Filbet is partially supported by the European Research Council ERC Starting Grant 2009,  project 239983-\textit{NuSiKiMo}}

\author{Francis Filbet and Am\'elie Rambaud}

\hyphenation{bounda-ry rea-so-na-ble be-ha-vior pro-per-ties
cha-rac-te-ris-tic}

\maketitle

\begin{abstract} 
We study the convergence of a class of asymptotic preserving  numerical schemes initially proposed by F. Filbet \& S. Jin \cite{filb1} and G. Dimarco \& L. Pareschi \cite{DimarcoP} in the context of nonlinear and stiff kinetic equations. Here, our analysis is devoted to the approximation of a system of transport equations with a nonlinear source term, for which the asymptotic limit is given by a conservation laws. We investigate the convergence of the approximate solution $(\ueps_h,\veps_h)$ to a nonlinear relaxation system, where  $\eps>0$ is a physical parameter and $h$ represents the discretization parameter. Uniform convergence with respect to $\eps$ and $h$ is proved and error estimates are also obtained. Finally, several numerical tests are performed to illustrate the accuracy and efficiency of such a scheme. 
\end{abstract}

\tableofcontents

\section{\bf Introduction}
\label{sec:1}
\setcounter{equation}{0}

Many physical problems are governed by hyperbolic conservation laws with non vanishing stiff source terms. These problems can describe the effect of relaxation toward an equilibrium, as in the kinetic theory of gases for example. Actually for monatomic gases, when an equilibrium state is perturbed, it gradually relaxes to the equilibrium state with Maxwellian velocity distribution. 
The time scale for such a relaxation process may not be short and the phenomenon of thermo-nonequilibrium becomes important. In this case, the compressible Euler or Navier-Stokes equations can be applied. In the domain of kinetic equations, the relaxation parameter is represented by the dimensionless Knudsen number $\eps>0$, defined as the ratio of the mean free path of the particles over a typical length scale, such as the size of the spatial domain. At the continuous level, it has been shown \cite{asympKin2} that, for the Boltzmann equation
$$
\displaystyle{\dt f + v\cdot \nabla_x f = \frac{1}{\eps}\,Q(f)\,,} 
$$
when the Knudsen number $\eps$ goes to zero, the distribution function $f$ converges to a local Maxwellian $\mathcal{M}$,  so the macroscopic model (compressible Euler or Navier-Stokes) becomes more adequate to describe the behavior. For more details about fluid dynamic limits of kinetic equation, we refer to \cite{ asympKin2,asympKin3, asympKin4,asympKin1}.  

In this context, developing robust numerical schemes for kinetic equations that also work in the fluid regime becomes challenging. It has been done in the framework of \emph{Asymptotic-Preserving} (AP) schemes \cite{filb1, jinAP}. As it has already been defined in \cite{jinAP}, a numerical scheme is Asymptotic Preserving if
\begin{enumerate}
 \item it is a suitable scheme for the relaxation problem (the kinetic equation), and when the mesh size and time step are fixed, whereas the parameter $\eps$ goes to zero, the scheme becomes a suitable discretization of the limit (equilibrium) problem;
\item implicit collision terms can be implemented explicitly.
\end{enumerate}

Indeed, from a numerical point of view, the treatment of the stiffness can not be done with explicit schemes, so mathematicians will rather favor the use of semi-implicit of fully implicit schemes.

One solution offered by E. Gabetta, L. Pareschi and G. Toscani \cite{gabetta} to design an Asymptotic Preserving scheme, was to penalize the nonlinear collision operator $Q(f)$ by a linear function $\lambda \, f$ and then absorb the linearly stiff part into the time variable to remove the stiffness.  More recently, F. Filbet \& S. Jin \cite{filb1} proposed to penalize the Boltzmann's operator by the BGK operator in order to build stable schemes with respect to $\eps>0$. If such schemes are now numerically validated and extensively used to discretize kinetic equations  \cite{ DimarcoP,filb1, filb2,gabetta,jinAP}, their mathematical study  has only been done in some particular cases \cite{golse_jin_lev}, because of the complexity of general kinetic equations, where the collision operator is nonlinear. 
In this context,  hyperbolic conservation laws represent a simplified framework where a theoretical study of relaxation schemes can be done \cite{ar_nat, chalabi1, chalabi3, chalabi2, ajout:1,filb1, jinAP, jinRK, ajout:0}. Indeed, there is a strong analogy between the local relaxation approximation of conservation laws, initially proposed in \cite{jinAP}, and the study of fluid dynamical limits of kinetic equations \cite{asympKin1}. In the domain of hyperbolic conservation laws with stiff source terms, the relaxation parameter plays the role of the Knudsen number in kinetic theory, while the source term is the analogous of the collision operator, which we want to be the most general possible.  
Few works are devoted to the mathematical analysis of relaxation scheme for the approximation of conservations laws. We refer for instance to the series of paper by D. Aregba-Driollet and R. Natalini \cite{ar_nat} and A. Chalabi \cite{ chalabi1, chalabi3, chalabi2}. But for all of them, the relaxation operator is relatively simple and can be easily treated explicitly without any additional computational cost. Moreover, the goal of these works mainly consists to approximate the asymptotic limit of the relaxation model and not to develop a scheme which is accurate and efficient in various regimes.

Here we want to focus on  the approximation of general relaxation system with a nonlinear and stiff source term as in the context of the Boltzmann kinetic equation, which is valid both for rarefied (large values of $\eps$) and hydrodynamic ($\eps\rightarrow 0$) regimes. Therefore, we will consider throughout the rest of the paper the following hyperbolic relaxation system. For all $(t,x)$ in $\R^+ \times \R$:
\begin{equation}
\label{our:sys1}
\left\{ 
\begin{array}{l}
\displaystyle{\dt \ueps + \dx \veps = 0 \,,}
\\
\,
\\
\displaystyle{\dt \veps + a\, \dx \ueps = -\frac{1}{\varepsilon} \,\mathcal{R}(\ueps,\veps),}
\end{array}
\right.
\end{equation}
where $ a \, > \,0$ is a constant coefficient to be discussed later, $\varepsilon$ is the relaxation parameter and $\Rcal : \, \R \times \R \, \mapsto \, \R $ is a nonlinear function. The system is completed with the initial conditions:
\begin{equation}
\label{our:sys2}
 \left\{ 
\begin{array}{l}
 \displaystyle{\ueps (0,x)  = \ueps_0 (x) \,,}
\\
\,
\\
\displaystyle{\veps (0,x) = \veps_0 (x) \,.}
\end{array}
\right.
\end{equation}

The system of equations (\ref{our:sys1})-(\ref{our:sys2}) is often referred as a two velocity kinetic equation. Here we will assume that the function $\Rcal\in\C^1(\R\times\R, \R)$  possesses a unique local equilibrium, restricted to the manifold $\displaystyle{\{ v = A(u)\}}$, that is,
 \begin{equation}
\label{hyp:00}
\displaystyle{\Rcal (u,v) = 0 \, \Leftrightarrow \, v = A(u)\,,}
\end{equation}
where $A$ is a locally Lipschitz continuous function with $\displaystyle{A(0) = 0}$. Therefore, under some assumptions on $\Rcal$ and on the initial data, the solution $(\ueps,\veps)$ to (\ref{our:sys1})-(\ref{our:sys2}) converges to $(u,v)$ with $v=A(u)$ and $u$ solution to the conservation laws \cite{chen,liu}
\begin{equation}
\label{equi:sys1}
\left\{ 
\begin{array}{l}
\displaystyle{\dt u + \dx A(u) = 0 \,,\textrm{ in }\R^+ \times \R,}
\\
\,
\\
\displaystyle{u(t=0) = u_0\,,}
\end{array}
\right.
\end{equation}
where the initial datum $u_0$ is given by 
\begin{equation}
\label{defu0}
 u_0 \, = \, \underset{\eps \rightarrow 0}{\lim} u_0^{\eps} \,.
\end{equation}

Concerning the mathematical study of the system (\ref{our:sys1})-(\ref{our:sys2}), we refer for instance  to \cite{nat}.
\begin{theorem}
\label{theo_contin:1}
Assume that the initial datum $(\ueps_0,\veps_0)$ is bounded independently of $\eps$ in $L^{\infty} \cap BV(\R)$. Consider $\Rcal\in\C^1(\R\times\R, \R)$, which satisfies (\ref{hyp:00}) and  $\nabla\Rcal$ is uniformly bounded with respect to $v$ and locally bounded with respect to $u$ such that, for any $(u,v)$ in $[-U_0,U_0]\times \R$:
\begin{equation}
\label{hyp:01}
\left\{
\begin{array}{l}
\displaystyle{ \left|\du \Rcal (u,v)\right| \, \leq \, g(U_0)\,,}
\\
\;
\\
\displaystyle{  0\,< \, \beta_0(U_0) \, \leq \, \dv \Rcal (u,v) \, \leq \, h(U_0) \,,}
\end{array}\right.
\end{equation}
where $\beta_0$, $g$ and $h$ are some constants depending only  on $U_0$. Then there exists a characteristic speed $a>0$ large enough (see the condition (\ref{hyp:03}) below) such that the system (\ref{our:sys1})-(\ref{our:sys2}) admits a unique solution $(\ueps, \veps)$ in $\mathcal{C}\left([0, \infty[, L^{1}_{loc} (\R)^2\right)$ and there exists a constant $C>0$ which only  depends on $a$ and $(\ueps_0,\veps_0)$, such that for any $\eps > 0$:
\begin{equation}
\label{resu:01}
\left\{
\begin{array}{l}
\displaystyle{\left\| \veps(t) \pm \sqrt{a}\, \ueps(t) \right\|_{{\infty}} \, \leq \,C \quad \forall \, t > 0,}
\\
\,
\\
\displaystyle{TV\left(\ueps(t)\right) \,+\,  TV\left(\veps (t)\right)  \,\, \leq\,\,C \quad \forall \, t > 0,}
\\
\,
\\
\displaystyle{\|\ueps(t+\tau) -\ueps(t)\|_{1} \,\,\leq\,\, C\,\tau, \quad \forall t\in\R^+,\,\tau\in\R^+,}
\\
\, 
\\
\displaystyle{\|\veps(t+\tau) -\veps(t)\|_{1} \,\,\leq\,\, \frac{C}{\eps}\,\tau, \quad \forall t\in\R^+,\,\tau\in\R^+,}
\\
\,
\\
\displaystyle{\|\veps(t+\tau) -\veps(t)\|_{1} \,\,\leq\,\, C_\nu \,\tau, \quad \forall t \, \geq \, \nu,\,\tau\in\R^+,}
\end{array}\right.
\end{equation}
where $\nu > 0$, and $C_\nu$ only depends on $a$, $(\ueps_0,\veps_0)$ and $\nu$.
Moreover, there exists $\beta_0>0$ such that,
\begin{equation}
\label{resu:02}
  \left\|\veps(t,\cdot)-A\left(\ueps(t,\cdot)\right)\right\|_{1} \,\,\leq \,\,e^{-\frac{\beta_0 t}{\eps}} \, \left\|v_0^{\eps}-A\left(u_0^{\eps}\right)\right\|_{1} \,+\, C\,\eps.
\end{equation}
Finally, if $\ueps_0$ converges, as $\eps$ goes to zero, to $u_0$ defined by  (\ref{defu0}), then  the sequence $(\ueps, \veps)$  converges to $(u,A(u))$ when $\eps$ goes to $0$, such that, for any $\nu > 0$: 
\begin{equation}
\left\{
\begin{array}{l}
\ueps \, \longrightarrow \, u \quad \mathcal{C}\left([0, \infty); L^1_{loc}(\R)\right),
\\
\,
\\
 \veps \, \longrightarrow \, A(u) \quad \mathcal{C}\left([\nu, \infty); L^1_{loc}(\R)\right),
\end{array}
\right.
\end{equation}
where $u$ is the unique entropic solution to the Cauchy problem (\ref{equi:sys1}).
\end{theorem}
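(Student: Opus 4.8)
The plan is to reduce the system to its characteristic (Riemann) variables and then run the classical program for dissipative $2\times 2$ relaxation systems. First I would set $\weps = \veps + \sqrt{a}\,\ueps$ and $\zeps = \veps - \sqrt{a}\,\ueps$, which diagonalizes the linear part: combining the two equations in (\ref{our:sys1}) yields
$$\dt \weps + \sqrt{a}\,\dx\weps = -\frac{1}{\eps}\Rcal(\ueps,\veps), \qquad \dt\zeps - \sqrt{a}\,\dx\zeps = -\frac{1}{\eps}\Rcal(\ueps,\veps),$$
with $\ueps = (\weps-\zeps)/(2\sqrt{a})$ and $\veps = (\weps+\zeps)/2$. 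Each invariant is transported along the constant speed $\pm\sqrt{a}$ and the two equations are coupled only through the source. The $L^\infty$ bound (first line of (\ref{resu:01})) I would obtain by an invariant-region argument: by (\ref{hyp:00})--(\ref{hyp:01}), whenever $\veps$ exceeds $A(\ueps)$ the dissipativity $\dv\Rcal\geq\beta_0>0$ makes the source $-\Rcal/\eps$ push $\veps$ back toward the equilibrium manifold, and the subcharacteristic condition (\ref{hyp:03}) on $a$ (roughly $\sqrt{a}\geq|A'|$ on the relevant range) guarantees that the region determined by the initial bounds on $\weps,\zeps$ is preserved along the flow. This is a maximum-principle argument applied to the two transport equations.

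Next I would establish the uniform total-variation bound (second line of (\ref{resu:01})). Differentiating the characteristic equations in $x$ gives transport equations for $\dx\weps,\dx\zeps$ with right-hand side $-\frac{1}{\eps}(\du\Rcal\,\dx\ueps + \dv\Rcal\,\dx\veps)$. Integrating $|\dx\weps|+|\dx\zeps|$ in $x$ (Kato's inequality, after regularization) and re-expressing $\dx\ueps,\dx\veps$ in terms of $\dx\weps,\dx\zeps$, the source contributes with a favorable (non-positive) sign precisely when $\dv\Rcal\geq\beta_0>0$ and the subcharacteristic condition make the coefficients $\dv\Rcal\pm\du\Rcal/\sqrt{a}$ nonnegative. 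I expect this to be the main obstacle: one must show that, although the source carries the singular factor $1/\eps$, the dissipativity controls its sign so that $TV(\ueps(t))+TV(\veps(t))$ stays bounded uniformly in both $\eps$ and $t$ rather than blowing up; this is exactly where (\ref{hyp:03}) enters and where the $1/\eps$ factor helps instead of hurting. The time-continuity estimates then follow more easily: $\|\ueps(t+\tau)-\ueps(t)\|_1$ is controlled through $\dt\ueps=-\dx\veps$ and the $TV$ bound, giving the Lipschitz estimate for $\ueps$; the equation $\dt\veps=-a\,\dx\ueps-\frac{1}{\eps}\Rcal$ gives the crude $C/\eps$ bound for $\veps$, while the sharper bound for $t\geq\nu$ uses that $\Rcal(\ueps,\veps)=O(\eps)$ once the solution has relaxed.

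For the relaxation estimate (\ref{resu:02}) I would write, using $\dt\ueps=-\dx\veps$ and the mean value identity $\Rcal(\ueps,\veps)=\Rcal(\ueps,\veps)-\Rcal(\ueps,A(\ueps))=\dv\Rcal(\ueps,\xi)\,(\veps-A(\ueps))$, an evolution equation for $\veps-A(\ueps)$,
$$\dt\big(\veps-A(\ueps)\big) + \frac{1}{\eps}\,\dv\Rcal(\ueps,\xi)\,\big(\veps-A(\ueps)\big) = -a\,\dx\ueps + A'(\ueps)\,\dx\veps,$$
whose right-hand side is bounded in $L^1$ by the $TV$ estimate. A Duhamel/Gronwall argument with decay rate $\beta_0/\eps$ then yields $\|\veps-A(\ueps)\|_1(t)\leq e^{-\beta_0 t/\eps}\|v_0^\eps-A(u_0^\eps)\|_1 + C\eps$, the $C\eps$ coming from $\int_0^t e^{-\beta_0(t-s)/\eps}\,O(1)\,ds\leq C\eps$. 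The Lipschitz-only regularity of $A$ is handled by mollification.

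Finally, convergence as $\eps\to0$ follows from compactness: the uniform space-$BV$ bound together with the time-Lipschitz bound for $\ueps$ give relative compactness in $\C([0,\infty);L^1_{loc}(\R))$, so a subsequence $\ueps\to u$ converges, and by (\ref{resu:02}) $\veps\to A(u)$ for $t\geq\nu$. Passing to the limit in $\dt\ueps+\dx\veps=0$ yields $\dt u+\dx A(u)=0$ in the distributional sense. To identify $u$ as the entropic solution of (\ref{equi:sys1}) I would pass to the limit in the family of relaxation entropy inequalities attached to the system, which, under the subcharacteristic condition, degenerate exactly to the Kruzhkov entropy inequalities for the conservation law. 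Uniqueness of the entropy solution then upgrades subsequential convergence to convergence of the whole family.
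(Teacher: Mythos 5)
The paper does not prove Theorem \ref{theo_contin:1} at all: it is quoted from the literature, the reader being referred to R.~Natalini \cite{nat} (see also \cite{chen,liu}), so there is no in-paper argument to compare against line by line. Your sketch reproduces the standard route of that reference: diagonalization into the Riemann invariants $\veps\pm\sqrt{a}\,\ueps$; an invariant-region/comparison argument for the $L^\infty$ bound resting on the quasi-monotonicity of the source in the diagonal variables (which is exactly where the sub-characteristic condition (\ref{hyp:03}) and $\dv\Rcal\geq\beta_0>0$ enter); a Kato-type $L^1$ estimate on the $x$-derivatives in which the nonnegativity of the coefficients $\dv\Rcal\pm\du\Rcal/\sqrt{a}$ makes the $1/\eps$ source dissipative rather than harmful; a Duhamel/Gronwall bound with rate $\beta_0/\eps$ for $\|\veps-A(\ueps)\|_1$ giving (\ref{resu:02}); and BV-compactness plus relaxation entropy pairs for the $\eps\to 0$ limit. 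All of these steps are sound, and your identification of the sign structure in the BV estimate as the crux is accurate; it is precisely the structure the paper later mirrors at the discrete level in Lemma \ref{lemma:1} and Propositions \ref{prop:01}--\ref{prop:03}.

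Two places where the sketch is thinner than a proof, though neither is a wrong turn. First, you derive a priori estimates for a solution you never construct: existence and uniqueness in $\mathcal{C}([0,\infty);L^1_{loc}(\R)^2)$ require an approximation procedure (fractional-step, mollified system, or fixed point) whose limit inherits the bounds, together with an $L^1$-contraction or stability estimate for uniqueness, and this is omitted. Second, the invariant region is in general strictly larger than the rectangle spanned by the initial data --- the equilibrium value $A(\ueps)$ toward which $\veps$ is pushed need not lie in the initial range of $\veps$ --- which is why the paper introduces the enlarged bounds $U$, $F$, $V$ of (\ref{bound:00}); the assertion that the region determined by the initial bounds on $\weps,\zeps$ is itself preserved needs this standard enlargement to be literally correct.
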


In this paper we propose a rigorous analysis of the Asymptotic Preserving scheme proposed by F. Filbet \& S. Jin \cite{filb1} and G. Dimarco \& L. Pareschi \cite{DimarcoP} for a nonlinear relaxation. In other words, denoting by $\PP^{\eps}$  and $\PP^0$ respectively the relaxation and the equilibrium Cauchy problems, and $\PP_h^{\eps}$ and $\PP_h^0$ the corresponding discrete problems, where $h$ represents the discretization parameter, independent of $\eps$, we will perform a precise analysis of the following Asymptotic Preserving diagram.  

$$
\xymatrix{
{ \PP^{\eps}_h} \ar[dd]_{\begin{array}{l}h  \\\downarrow \\ 0\end{array}} \ar[rrrr]^{ \eps \, \rightarrow \, 0}    &\,&\, &\,& { \PP^0_h} \ar[dd]^{\begin{array}{l}h  \\\downarrow \\ 0\end{array}}   
\\
\, &\,&\, &\, &\,
\\
{ \PP^{\eps}} \ar[rrrr]_{ \eps \, \rightarrow \, 0}   &\,&\,&\,& { \PP^0}   } 
$$

The paper is organized as follows. We present in Section \ref{sec:discretizations} an asymptotic preserving scheme for the  relaxation model, and state the both convergence results of the Asymptotic Preserving scheme when the relaxation parameter $\eps$ goes to zero (Theorem~\ref{theo:00}) and next when the discretization parameter $h$ goes to zero (Theorem~\ref{theo:01}). Then, we establish some \emph{a priori} estimates in $L^{\infty}$ and $BV$ on the numerical solution to the Asymptotic Preserving scheme in Section \ref{sec:apriori} in order to prove both the zero relaxation limit (Section~\ref{sec:deviation}) and the convergence of the scheme (Section~\ref{sec:convergence}). Notice that these {\it a priori} estimates are uniform both with respect to the relaxation parameter $\eps$ and the time step $\Delta t$ and space step $\Delta x$.  Finally, we present some numerical results in Section \ref{sec:simul}. 

\section{\bf Numerical schemes and main results}
\label{sec:discretizations}
\setcounter{equation}{0}

We remind that when $\Rcal(u,v) = v - A(u)$, where $A\in\C^1(\R,\R)$ is a given function,  the necessary and sufficient stability condition is given by the so called sub-characteristic condition \cite{BianHanNat,jinAP}:
\begin{equation}
\label{sub:classic}
|A'(u)| \, < \, \sqrt{a}\,.
\end{equation}
It means that the propagation speed of the equilibrium problem has to be bounded by the speeds of the relaxation system, which has to be dissipative. For more details about this case, we refer to \cite{chen,liu,nat}. 

Hence the sub-characteristic condition reads, in our case:
$$\displaystyle{\left| \frac{\du \Rcal \left(u,v\right)}{\dv \Rcal \left(u,v\right)}\right| \, < \, \sqrt{a}\,.} $$

For the sequel, we define for any $N \,>\,0$ and $\alpha>0$,

\begin{equation}
\label{bound:00}
\left\{
\begin{array}{l}
 U(N,\alpha) := \left( 1 + \frac{1}{\sqrt{\alpha}}\right) \, N \,,
\\
\,
\\
 F(N,\alpha) := \underset{|\xi| \leq U(N,\alpha)}{\sup} |A(\xi)|\,,
\\
\,
\\
 V(N,\alpha) : = U(N,\alpha) \,\,+\,\,  \frac{1}{\sqrt{\alpha}} \,F\left(N,\alpha\right)\,. 
\end{array}
\right.
\end{equation}

We also denote  by $I(N,\alpha)$  the compact set  
\begin{equation}
\label{bound:01}
I(N,\alpha)\,:=\, 
\left[-\sqrt{a}\,V(N,\alpha),\, \sqrt{a}\, V(N,\alpha)\right]^2\,.
\end{equation}
Moreover, we assume that the initial conditions $\ueps_0,\veps_0$ are bounded independently of $\eps$ in $L^\infty(\R)$, such that:
\begin{equation}
\label{linf}
\displaystyle{N_0 : = \max \left\{ \underset{\eps > 0 }{\sup} \| \ueps_0\|_{\infty},  \underset{\eps > 0 }{\sup} \| \veps_0\|_{\infty} \right\} \, < \, \infty \,.}
\end{equation}
Consider any $a_0>0$  and assume that the function $\Rcal\in\C^1(\R\times\R, \R)$ satisfies (\ref{hyp:00}) and (\ref{hyp:01}).  We choose the characteristic speed $\sqrt{a}>0$ and the parameter $\beta>0$ such that
\begin{equation}
\label{hyp:03}
\left\{
\begin{array}{l}
\displaystyle{ \sqrt{a}\, > \, \max \left\{ \sqrt{a_0}\,,\, \frac{g\left(V(N_0, a_0)\right)}{\beta_0\left(V(N_0, a_0)\right)} \right\}\,,}
\\
\,
\\
\beta \,=\, h\left(V(N_0,a_0)\right), 
\end{array}\right.
\end{equation}
where $V$ is given  by (\ref{bound:00}).

\begin{remark}
\label{rem:00} 
Note that if we differentiate with respect to $u$ the equilibrium equation
$$\displaystyle{\Rcal (u,A(u)) = 0\,,} $$
we obtain
\begin{equation}
\label{sub:00}
\displaystyle{A'(u) = - \frac{\du \Rcal \left(u,A(u)\right)}{\dv \Rcal \left(u,A(u)\right)}}
\end{equation}
and thus recover the well known sub-characteristic condition in the case of semi-linear relaxation, namely:
$$\displaystyle{|A'(u)| \, < \, \sqrt{a}\,.} $$
\end{remark}

We present here the splitting Asymptotic Preserving scheme and its relaxed version. We introduce  a space time discretization based on a uniform grid of points $\displaystyle{(x_{j\,+\,1/2})_{j\, \in \, \Z} \, \subset \, \R}$, with space step $\Delta x$, and discrete time $t^n = n\, \Delta t$, $n\in\N$, for which the time step $\Delta t$ satisfies the CFL condition:
\begin{equation}
 \displaystyle{0 \, < \, \lambda := \frac{\sqrt{a}\, \Delta t}{\Delta x} \, < \, 1\,. }  
\label{CFL}
\end{equation}
We denote by $h = (\Delta t, \Delta x)$ the discretization parameter.

 \subsection{\bf An Asymptotic Preserving scheme for the relaxation system }
\label{AP:01}

In this section, we design a  numerical scheme for system (\ref{our:sys1})-(\ref{our:sys2}), by introducing a splitting between the linear transport part, and the nonlinear relaxation part, for which we will take advantage of the knowledge of the equilibrium (\ref{hyp:00}). In the asymptotic regime $\varepsilon\rightarrow 0$, the differential equation (\ref{our:sys1}) becomes stiff and explicit schemes are subject to a stability  constraints  $\Delta t = O(\eps)$. Of course, implicit schemes allow larger time steps, but new difficulty arises in computing the numerical solution of a fully nonlinear problem at each time step.  Here we want to combine both advantages of implicit and explicit schemes, that is, large time step for stiff problems and low computational complexity of the numerical solution at each time step. This is done, as said in the introduction, in the spirit of Asymptotic Preserving schemes for nonlinear relaxation problems introduced by F. Filbet \& S. Jin \cite{filb1} and G. Dimarco \& L. Pareschi \cite{DimarcoP}.

Thus we construct a numerical solution $(u_{h}^{\eps}, v_{h}^{\eps})$ to (\ref{our:sys1})-(\ref{our:sys2}) in  $\R^+\times \R$ as follows
\begin{equation}
\label{uh:vh}
\left\{ \begin{array}{ll}
 \displaystyle{u_{h}^{\eps} (t,x) }&\displaystyle{= \sum_{n\in\N} \,\sum_{j \in \Z}\, u_j^{n} \, \mathbf{1}_{C_j} (x)\, \mathbf{1}_{[t^{n}, t^{n+1}[} (t)  \,,}
\\
\;
\\
\displaystyle{v_{h}^{\eps} (t,x) }&\displaystyle{= \sum_{n\in\N} \,\sum_{j \in \Z}\, v_j^{n} \, \mathbf{1}_{C_j} (x)\, \mathbf{1}_{[t^{n}, t^{n+1}[} (t)  \,,}
\end{array}
\right.
\end{equation}
 where $C_j = ]x_{j-1/2}, x_{j+1/2}[$ are the space cells and the sequences $(u_j^{n})_{n,j}$ and $(v_j^{n})_{n,j}$ depend on $\eps$ and are given below.

First, initial data are computed as the averaged values of (\ref{our:sys2}) through each space cell: for all $j$ in  $\Z$,

\begin{equation}\label{sch:ini}
\left\{ \begin{array}{ll}
 \displaystyle{u_{j}^{0} }&\displaystyle{= \frac{1}{\Delta x} \, \int_{C_j} \, u_0^{\eps}(x) \, \ud x \,,}
\\
\,
\\
\displaystyle{v_{j}^{0} }&\displaystyle{= \frac{1}{\Delta x} \, \int_{C_j} \, v_0^{\eps}(x) \, \ud x \,.}
\end{array}
\right.
\end{equation}

Then, in order to discretize the system (\ref{our:sys1})-(\ref{our:sys2}), we apply a splitting strategy  into a linear transport part and a stiff ordinary differential part as follows. The first part consists to apply a time explicit scheme combined  with a finite volume method to the following linear differential system 
\begin{equation}\label{transport}
 \left\{ 
\begin{array}{l}
\displaystyle{\dt u + \dx v = 0 \,,}
\\
\,
\\
\displaystyle{\dt v + a\, \dx u = 0\,,}
\end{array}
\right.
\end{equation}
and then the second part deals with the stiff ordinary differential equations 
\begin{equation}\label{stiff}
 \left\{ 
\begin{array}{l}
\displaystyle{\dt u = 0 \,,}
\\
\,
\\
\displaystyle{\dt v  = -\frac{1}{\eps}\,\mathcal{R}(u,v)\,.} 
\end{array}
\right.
\end{equation}

We first approximate the linear transport  part  (\ref{transport}), that is,  for a given $\left(u^{n} \,, \, v^{n}\right)$, we compute  $(u^{n+1/2},\, v^{n+1/2})$ at time  $t^{n+1}$ with a standard Finite Volume scheme, that is, for all $j \, \in \, \Z$, 
\begin{equation}
\label{sch:00}
\left\{ 
\begin{array}{l}
\displaystyle{u_j^{n+1/2} \,\,=\,\, u_j^{n} \,\,-\,\, \Delta t \,D_h v_j^{n} \,,} 
\\
\,
\\
\displaystyle{v_j^{n+1/2} \,\,=\,\, v_j^{n} \,\,-\,\, \Delta t \,a\, D_h u_j^{n} \,,}
\end{array}
\right. 
\end{equation}
 where $D_h v_j^{n}$ and $a\,D_h u_j^{n}$ are discrete derivatives with respect to $x$ of $v$ and $u$, given for instance by a Lax-Friedrichs scheme, namely:
$$
\left\{
\begin{array}{l}
\displaystyle{D_h v_j^{n}  = \,\frac{1}{2\, \Delta x}\,\left[\left(v_{j+1}^{n} - v_{j-1}^{n}\right) - \sqrt{a}\,\left(u_{j+1}^{n} - 2\,u_j^{n} + u_{j-1}^{n}\right)\right]\,,}
\\
\,
\\
\displaystyle{a \,D_h u_j^{n} \, = \,\frac{1}{2\, \Delta x}\,\left[a\,\left(u_{j+1}^{n} - u_{j-1}^{n}\right) - \sqrt{a}\,\left(v_{j+1}^{n} -2\, v_j^{n} + v_{j-1}^{n}\right)\right]\,.}
\end{array}
\right.
 $$

\begin{remark}
\label{rem:01}
Of course, there is a wide range of possible choices for the numerical fluxes. As we will see below, the main property of the numerical scheme for the linear transport term that we require  is the TVD (\emph{Total Variation Diminishing}) property, namely, for all $n \, \in \N$,
$$\left\{
\begin{array}{l}
 \displaystyle{TV (u^{n+1/2}) \, \leq \, TV (u^{n})\,, }
\\
\,
\\
\displaystyle{TV (v^{n+1/2}) \, \leq \, TV (v^{n})\,, }
\end{array}
\right.
 $$
where  $\displaystyle{TV(u) \, : = \, \sum_{j\, \in \, \Z}\, \left|u_{j+1} - u_j\right|}$.
\end{remark}

Hence, the second part of the splitting only consists in approximating the nonlinear ordinary differential equation (\ref{stiff}), for all $j \, \in \, \Z$, starting from $(u_j^{n+1/2},v_j^{n+1/2})$. To this aim, we use the decomposition 
$$
\Rcal(u,v) = \left[\Rcal(u,v) - \beta\,\left(v -A(u)\right)\right] \,+\,  \beta\,\left(v - A(u)\right),
$$
where $\beta>0$ is a parameter such that
$$
0\,<\,\sup_{(u,v)}\partial_v \Rcal(u,v) \,\,<\,\, \beta.  
$$ 
Then, we apply  a time exponential scheme on the dissipative part and get the following numerical scheme:
\begin{equation}
\label{sch:01}
\left\{ 
\begin{array}{ll}
\displaystyle{u_j^{n+1}} =&\displaystyle{ u_j^{n+1/2} \,,} 
\\
\,
\\
\displaystyle{v_j^{n+1}} =& \displaystyle{ v_j^{n+1/2} - \left(v_j^{n+1/2} - A(u_j^{n+1/2})\right) \, \left[ 1 - \left(1+\frac{\beta \, \Delta t}{\eps}\right) e^{-{\beta \, \Delta t}/{\eps}} \right]} 
\\
\,
\\
 & \displaystyle{  - \frac{\Delta t}{\eps} \,  e^{-{\beta \, \Delta t}/{\eps}} \, \Rcal \left(u_j^{n+1/2}, v_j^{n+1/2}\right) \,.}
\end{array}
\right. 
\end{equation}

This numerical scheme (\ref{sch:00})-(\ref{sch:01}) allows to define a sequence $(u_j^n,v_j^n)_{(j,n)\in\Z\times\N}$.

 \subsection{\bf Convergence results }

We first establish a convergence result on the asymptotic behavior of the numerical solution to (\ref{sch:00})-(\ref{sch:01}) when $\eps$ tends to zero and $h=(\Delta t, \Delta x)$ is fixed.

\begin{theorem}
\label{theo:00}
Assume that the initial conditions $(\ueps_0,\veps_0)$ are bounded independently of $\eps$ in $BV(\R)$ and such that the assumption (\ref{linf}) is satisfied. Consider $\Rcal\in\C^1(\R\times\R, \R)$, which satisfies (\ref{hyp:00})-(\ref{hyp:01}) and the characteristic speed $\sqrt{a}>0$ and the parameter $\beta>0$ are given by (\ref{hyp:03}). Then, the solution $(u_{h}^{\eps}, v_{h}^{\eps})$ given by (\ref{uh:vh}) to the scheme (\ref{sch:00})-(\ref{sch:01}) with the initial data (\ref{sch:ini}), converges in  $L^1(\R)$, as $\eps \rightarrow 0$, to a numerical solution $(u_h,v_h)$, that is, 
$$
\| u^\eps_h(t) - u_h(t)\|_{1} + \| v^\eps_h(t) - v_h(t)\|_{1} \,\leq\,  C_t\,e^{-\beta_0\Delta t/\eps}\, \left[ 1+\left\|\delta^0\right\|_{1}\right],
$$
where  $(u_h,v_h)$  is a consistent approximation to the conservation laws (\ref{equi:sys1}) with   $v_h=A(u_h)$,
$$
u_h(t,x):=\sum_{n\in\N}\sum_{j\in\Z} \bar u_j^{n}\,\mathbf{1}_{C_j} (x)\, \mathbf{1}_{[t^{n}, t^{n+1}[} (t),
$$
and the sequence 
\begin{equation}
\label{copain:0}
\bar u_j^{n+1} = \bar u_j^n \,+\, \Delta t\, D_hA(\bar u_j^n), \qquad j\in\Z,\,\, n\geq 1, 
\end{equation}
with the initial data 
\begin{equation}\label{schLim:ini}
 \begin{array}{ll}
 \displaystyle{\bar u_{j}^{0} }&\displaystyle{= \frac{1}{\Delta x} \, \int_{C_j} \, u_0(x) \, \ud x \,,}
\end{array}
\end{equation}
where $u_0$ is given by (\ref{defu0}) and $\delta^0 = \ueps_0 - A(\veps_0)$.
\end{theorem}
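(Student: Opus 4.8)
The plan is to monitor the deviation from local equilibrium $w^{\eps,n}_j:=v^{\eps,n}_j-A(u^{\eps,n}_j)$ and to show that the relaxation step (\ref{sch:01}) contracts it by the factor $e^{-\beta_0\Delta t/\eps}$. First I would rewrite (\ref{sch:01}): since $u^{\eps,n+1}_j=u^{\eps,n+1/2}_j$, and since $\Rcal(u,A(u))=0$ from (\ref{hyp:00}) gives $\Rcal(u^{n+1/2}_j,v^{n+1/2}_j)=\dv\Rcal(u^{n+1/2}_j,\xi_j)\,w^{\eps,n+1/2}_j$ for some intermediate $\xi_j$ (mean value theorem), the penalization $\beta$ cancels and (\ref{sch:01}) collapses to
\begin{equation*}
w^{\eps,n+1}_j \;=\; e^{-\beta\Delta t/\eps}\Bigl(1+\tfrac{\Delta t}{\eps}\bigl(\beta-\dv\Rcal(u^{n+1/2}_j,\xi_j)\bigr)\Bigr)\,w^{\eps,n+1/2}_j .
\end{equation*}
By the uniform $L^\infty$ bounds of Section~\ref{sec:apriori} the arguments remain in the range where (\ref{hyp:01}) and (\ref{hyp:03}) force $\beta_0\le\dv\Rcal\le\beta$, so $0\le\frac{\Delta t}{\eps}(\beta-\dv\Rcal)\le\frac{\Delta t}{\eps}(\beta-\beta_0)$; combined with the elementary inequality $1+x\le e^x$ this yields the pointwise contraction $|w^{\eps,n+1}_j|\le e^{-\beta_0\Delta t/\eps}\,|w^{\eps,n+1/2}_j|$.

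Next I would cross the transport step (\ref{sch:00}): the mean value theorem applied to $A$ yields $w^{\eps,n+1/2}_j=w^{\eps,n}_j-\Delta t\bigl(a\,D_hu^{\eps,n}_j-A'(\zeta_j)\,D_hv^{\eps,n}_j\bigr)$, and the uniform $BV$ bounds of Section~\ref{sec:apriori} control $\Delta x\sum_j|D_hu^{\eps,n}_j|$ and $\Delta x\sum_j|D_hv^{\eps,n}_j|$, whence $\|w^{\eps,n+1/2}\|_{1}\le\|w^{\eps,n}\|_{1}+C\Delta t$. Inserting this into the contraction gives $\|w^{\eps,n+1}\|_{1}\le e^{-\beta_0\Delta t/\eps}(\|w^{\eps,n}\|_{1}+C\Delta t)$, and summing the geometric series (with $\|w^{\eps,0}\|_{1}\le C(1+\|\delta^0\|_{1})$) produces
\begin{equation*}
\|w^{\eps,n}\|_{1}\;\le\; C\,e^{-\beta_0\Delta t/\eps}\,(1+\|\delta^0\|_{1}), \qquad n\ge1 .
\end{equation*}
Passing to the limit $\eps\to0$ in (\ref{sch:01}) shows that the relaxation step degenerates into the equilibrium projection $v\mapsto A(u)$; the limit scheme therefore performs the \emph{same} first transport as the $\eps$-scheme (its initial datum $\bar v^0$ is left unprojected), and for $n\ge1$ satisfies $\bar v^n=A(\bar u^n)$ with $\bar u^n$ obeying (\ref{copain:0}) --- precisely the monotone Lax--Friedrichs scheme for $\dt u+\dx A(u)=0$, which is consistent.

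It remains to propagate the error $\phi^n_j:=u^{\eps,n}_j-\bar u^n_j$. Since the two schemes share the same discrete data and an identical first transport step, $\phi^1=0$. For $n\ge1$ I would subtract the $u$-updates; writing $A(u^{\eps,n}_j)-A(\bar u^n_j)=b^n_j\,\phi^n_j$ with $|b^n_j|\le\sup|A'|<\sqrt a$ (the sub-characteristic condition of (\ref{hyp:03}), cf. Remark~\ref{rem:00}), the Lax--Friedrichs viscosity and the flux difference combine into an update whose coefficients are nonnegative and whose column sums equal $1$ under the CFL condition (\ref{CFL}). This renders the $\phi$-update $L^1$-nonexpansive, leaving only the source produced by the deviation,
\begin{equation*}
\|\phi^{n+1}\|_{1}\;\le\;\|\phi^n\|_{1}+\frac{\Delta t}{\Delta x}\,\|w^{\eps,n}\|_{1} .
\end{equation*}
Summing over $n$ and inserting the exponential bound on $\|w^{\eps,n}\|_{1}$ gives $\|\ueps_h(t)-u_h(t)\|_{1}\le C_t\,e^{-\beta_0\Delta t/\eps}(1+\|\delta^0\|_{1})$; the estimate on $\|\veps_h(t)-v_h(t)\|_{1}$ then follows from $v^{\eps,n}-A(\bar u^n)=(A(u^{\eps,n})-A(\bar u^n))+w^{\eps,n}$, the Lipschitz continuity of $A$, and the two preceding bounds.

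The main obstacle is this discrete $L^1$-nonexpansiveness: one must uncover the monotone structure hidden in the coupled transport, where the sub-characteristic bound $|A'|<\sqrt a$ is exactly what keeps every coefficient nonnegative. A secondary but essential point is the bookkeeping of the initial step: the first transport must be matched between the two schemes, so that the residual carries the full factor $e^{-\beta_0\Delta t/\eps}$ rather than leaving an $O(\|\delta^0\|_{1})$ floor.
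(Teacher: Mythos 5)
Your strategy is genuinely different from the paper's, and most of its steps are sound, but as written it proves a weaker estimate than the one claimed. The paper never eliminates $v$: it passes to the diagonal variables $w=-v-\sqrt a\,u$, $z=v-\sqrt a\,u$, notes that the transport step is then \emph{identical} for the $\eps$-scheme and the limit scheme, and compares the two relaxation steps by writing $G_{0,\Delta t}=G_{\eps,\Delta t}-\Delta t\,\Ecal^n_j(\eps)$; the quasi-monotonicity of $G_{\eps,s}$ (Lemma~\ref{lemma:1} $(ii)$--$(iii)$) makes the coupled update $\ell^1$-nonexpansive in the difference of the two solutions, and the only source term is $\Delta t\,|\Ecal^n_j(\eps)|\le e^{-\beta_0\Delta t/\eps}\,|\delta^{n+1/2}_j|$, where the deviation appears \emph{undifferentiated}. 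Summing with $\|\delta^{k+1/2}\|_1\le C\Delta t$ for $k\ge1$ then yields a total error $e^{-\beta_0\Delta t/\eps}\left(\|\delta^0\|_1+C\,t^n\right)$ with a constant independent of $h$. In your version, eliminating $v$ forces the deviation to enter the $u$-error recursion through the discrete flux difference $\frac{\Delta t}{2\Delta x}\bigl(\delta^n_{j+1}-\delta^n_{j-1}\bigr)$. You can bound this either by $\frac{\Delta t}{\Delta x}\|\delta^n\|_1$, which keeps the factor $e^{-\beta_0\Delta t/\eps}$ but accumulates over $n=t/\Delta t$ steps to $\frac{C\,t}{\Delta x}\,e^{-\beta_0\Delta t/\eps}\bigl(1+\|\delta^0\|_1\bigr)$ --- i.e.\ your $C_t$ blows up like $1/\Delta x$ (equivalently $1/\Delta t$ under the CFL condition (\ref{CFL})) --- or by $\Delta t\,TV(\delta^n)$, which is uniform in $h$ but forfeits the exponential decay, since the relaxation step multiplies $\delta_j$ by a $j$-dependent factor and therefore does not contract $TV(\delta)$. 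So your argument does establish convergence as $\eps\to0$ at fixed $h$, but not with the $h$-uniform constant that the paper's diagonal-variable argument delivers and that the notation $C_t$ is meant to convey. The ingredients you do use are correct: the pointwise contraction $|\delta^{n+1}_j|\le e^{-\beta_0\Delta t/\eps}|\delta^{n+1/2}_j|$ and the transport estimate $\|\delta^{n+1/2}\|_1\le\|\delta^n\|_1+C\Delta t$ are exactly Proposition~\ref{prop:03}, and your monotonicity computation (nonnegative coefficients under (\ref{CFL}) and $|A'|<\sqrt a$, unit column sums) is valid.

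One further bookkeeping point: $\phi^1=0$ requires the cell averages of $\ueps_0$ to coincide with those of $u_0=\lim_{\eps\to0}\ueps_0$ (and the limit scheme's unprojected $\bar v^0$ to coincide with $v^{\eps,0}$), which the hypotheses do not guarantee. You should carry the initial discrepancy $\|u^{\eps,0}-\bar u^0\|_1+\|v^{\eps,0}-\bar v^0\|_1$ through the recursion, as the paper does in its final estimate before discarding it.
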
 

The proof of this theorem corresponds is analogous to the one corresponding to the continuous problem. The fact that we are able to establish uniform $BV$ bounds on the numerical solution allow to get error estimates with respect to $\eps$.

On the other hand, we propose a convergence result   of the Asymptotic Preserving scheme when $h=(\Delta t,\Delta x)$ goes to zero and $\eps$ is fixed:

\begin{theorem}
\label{theo:01}
Assume that the initial conditions $\ueps_0,\veps_0$ are bounded independently of $\eps$ in $BV(\R)$ and such that the assumption (\ref{linf}) is satisfied. Consider $\Rcal\in\C^1(\R\times\R, \R)$, which satisfies (\ref{hyp:00})-(\ref{hyp:01}) and the characteristic speed $\sqrt{a}>0$ and the parameter $\beta>0$ are given by (\ref{hyp:03}). Then, the solution $(u_{h}^{\eps}, v_{h}^{\eps})$ given by (\ref{uh:vh}) to the scheme (\ref{sch:00})-(\ref{sch:01}) and the initial data (\ref{sch:ini}), converges in  $L_{loc}^{1} \left(\R^+ \times \R\right)$, as $h \rightarrow (0,0)$, to a weak solution $(\ueps,\veps)$ to the relaxation Cauchy problem (\ref{our:sys1})-(\ref{our:sys2}). More precisely, we have 
$$
\|u_h^\eps(t) - u^\eps(t)\|_1 \,+\,\|v_h^\eps(t) - v^\eps(t)\|_1\,\leq\, \frac{C_t}{\eps} \left(\Delta t\,\left(\frac{\left\|\delta^{0}\right\|_{1}}{\eps} \,+\,  1\,\right) \,+\,\Delta x^{1/2} \right).   
$$
where $\delta^0 = \ueps_0 - A(\veps_0)$.
\end{theorem}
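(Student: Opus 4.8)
The plan is to control directly the discrete error $e_h^n := \big(u_j^n - \overline{\ueps}(t^n),\, v_j^n - \overline{\veps}(t^n)\big)_{j\in\Z}$ measured in $L^1$, where $\overline{(\cdot)}$ denotes cell-averaging of the exact solution $(\ueps,\veps)$ furnished by Theorem~\ref{theo_contin:1}, by means of a consistency--stability decomposition closed by a discrete Gronwall inequality. Throughout, $\eps>0$ is fixed and the dependence on $\eps$ is tracked explicitly. First I would record the \emph{a priori} bounds of Section~\ref{sec:apriori}: they confine $(u_h^\eps,v_h^\eps)$, uniformly in $h$, to the compact set $I(N_0,a_0)$ of (\ref{bound:01}), which by Theorem~\ref{theo_contin:1} also contains $(\ueps,\veps)$. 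On this set (\ref{hyp:01}) gives $|\du\Rcal|\le g$ and $0<\beta_0\le\dv\Rcal\le\beta$, so $\Rcal$ is globally Lipschitz there; I would also invoke the time-modulus estimates (\ref{resu:01}), in particular $\|\veps(t+\tau)-\veps(t)\|_{1}\le C\tau/\eps$, since this stiff modulus is exactly the source of the $1/\eps$ prefactors in the final bound.

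Next, the consistency step. Inserting the exact solution into the one-step map (\ref{sch:00})--(\ref{sch:01}), I would split the local residual into three pieces: (i) the Lax--Friedrichs transport residual for the linear system (\ref{transport}); (ii) the relaxation residual of the exponential integrator for $\dt v=-\frac1\eps\Rcal$; and (iii) the splitting (commutator) error between the two substeps. Part (i) is the classical monotone-scheme residual: for $BV$ data it accumulates to an $L^1$ error of order $\Delta x^{1/2}$ via a Kuznetsov-type interpolation between the $O(\Delta x)$ smooth-data consistency and the $O(1)$ $BV$ bound, the $1/\eps$ entering through the stiff time modulus of $\veps$. For (ii), Taylor-expanding the exponential and using the decomposition $\Rcal=[\Rcal-\beta(v-A(u))]+\beta(v-A(u))$ shows the accumulated error is of order $\frac{\Delta t}{\eps}\big(\frac{\|\delta^0\|_{1}}{\eps}+1\big)$, where the extra $1/\eps$ and the initial deviation from equilibrium $\|\delta^0\|_{1}$ arise from the transient stiff layer and are quantified using the contraction estimate (\ref{resu:02}). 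Part (iii) is absorbed into the same orders.

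For stability I would diagonalize in the characteristic variables $w^\pm=v\pm\sqrt a\,u$, in which the transport substep (\ref{sch:00}) becomes a convex combination under the CFL condition (\ref{CFL}) and is therefore $L^1$-nonexpansive, while the relaxation substep (\ref{sch:01}) contracts $v$ by a factor $e^{-\beta_0\Delta t/\eps}\le 1$: a direct computation shows its derivative with respect to $v$ equals $e^{-\beta\Delta t/\eps}\big(1+\frac{\beta\Delta t}{\eps}-\frac{\Delta t}{\eps}\dv\Rcal\big)$, which lies in $(0,1)$ under (\ref{hyp:03}), so the map is monotone and $L^1$-contractive. The cross-coupling through $\du\Rcal$ is controlled, exactly as at the continuous level, by the sub-characteristic condition $\sqrt a>g/\beta_0$ of (\ref{hyp:03}). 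This yields a one-step inequality $\|e_h^{n+1}\|_{1}\le(1+C\Delta t)\,\|e_h^n\|_{1}+(\text{local residual})$, and summing over $t^n\le t$ by discrete Gronwall produces the stated estimate with $C_t=Ce^{Ct}$.

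The main obstacle I anticipate is obtaining the correct powers of $1/\eps$ without a spurious $e^{Ct/\eps}$ blow-up: a naive Gronwall argument using the Lipschitz constant $\beta/\eps$ of the stiff source would ruin the estimate. Avoiding this requires genuinely exploiting the dissipativity $\dv\Rcal\ge\beta_0>0$, so that the relaxation substep contracts rather than amplifies the error, keeping the Gronwall constant $\eps$-independent and isolating the $1/\eps$'s in the consistency residual alone. A second delicate point is the $\Delta x^{1/2}$ transport rate for merely $BV$ data, where the Kuznetsov interpolation must be carried out while propagating the stiff time modulus $C\tau/\eps$ of $\veps$ through the estimate.
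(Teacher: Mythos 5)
Your proposal is correct and follows essentially the same strategy as the paper: a consistency--stability argument in the characteristic variables $w,z$, where the $L^1$-stability comes from the monotonicity/quasi-monotonicity of the combined transport--relaxation map under (\ref{CFL}) and (\ref{hyp:03}) (the paper's Lemma~\ref{lemma:1} actually yields exact nonexpansiveness of the summed error, so no Gronwall factor is even needed), and the consistency residual carries all the $1/\eps$'s via the stiff time modulus and the deviation estimate (\ref{resu:02}). Your Kuznetsov-type interpolation for the $\Delta x^{1/2}$ transport rate is the same device the paper implements by mollifying the exact $BV$ solution at scale $\delta$ and optimizing $\delta=\Delta x^{1/2}$ at the end.
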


\section{\bf \emph{A priori} estimates}
\label{sec:apriori}
\setcounter{equation}{0}

We first define precisely the parameter $\beta$ and the assumption on the characteristic speed to ensure the stability of the scheme (\ref{sch:00})-(\ref{sch:01}) and  prove  estimates on the solution to the relaxation problem which are uniform with respect to $\eps$. In following  section, we drop  the subscripts $\eps$ for sake of clarity and investigate the stability property  of the Asymptotic Preserving scheme (\ref{sch:00})-(\ref{sch:01}).

\subsection{\bf \emph{A priori} estimate on the relaxation operator}

Let us focus on the second part of the scheme devoted to the approximation of the relaxation source term and give a technical lemma, which establishes a quasi-monotonicity property on the operator $G_{\eps,s}$ with $s>0$. 
 In order to do this, we will rather consider the equivalent formulation on the diagonal variables $w$ and $z$. Let us rewrite the splitting scheme on these variables. For $u$ and $v$ given, we set 
\begin{equation}
\label{def:wz}
\left\{
\begin{array}{l}
\displaystyle w \,=\, -v \,-\, \sqrt{a} \, u \,,
\\ 
\,
\\
\displaystyle z \,=\, +v \,-\, \sqrt{a}\, u\,.
\end{array}\right.
\end{equation}
Therefore, the linear transport scheme (\ref{sch:00}) written for $(w,z)$ exactly coincides with an  upwind finite volume method: for all $j\,  \in \, \Z$,
\begin{equation}
\label{sch:02}
 \left\{ 
\begin{array}{l}
\displaystyle w_j^{n+1/2} \,=\, w_j^n \,- \sqrt{a}\, \frac{\Delta t}{\Delta x}\, \left(w_{j}^n - w_{j-1}^n\right)  \,, 
\\
\,
\\
\displaystyle z_j^{n+1/2} \,=\, z_j^n \,+ \sqrt{a}\, \frac{\Delta t}{\Delta x}\, \left(z_{j+1}^n - z_{j}^n\right)  \,,
\end{array}
\right. 
\end{equation}
whereas the  nonlinear stiff part  (\ref{sch:01}) yields, for all $j \, \in \, \Z$
\begin{equation}
\label{sch:03}
\left\{ 
\begin{array}{l}
\displaystyle w_j^{n+1} \,=\, \displaystyle{ w_j^{n+1/2} \,+\, G_{\eps, \Delta t} \left(w_j^{n+1/2}, z_j^{n+1/2}\right) \,,}
\\
\;
\\
\displaystyle z_j^{n+1} \,=\, \displaystyle{ z_j^{n+1/2} \,-\, G_{\eps, \Delta t} \left(w_j^{n+1/2}, z_j^{n+1/2}\right) \,,} 
\end{array}
\right. 
\end{equation}
with  
\begin{eqnarray*}
\ G_{\eps, \Delta t} (w,z) &=& \displaystyle{ \left(\frac{z- w}{2} - A\left(-\frac{w + z}{2\, \sqrt{a}}\right)\right)\, \, \left[1 - \left(1+ \frac{\beta \, \Delta t}{\eps}\right)\,e^{-{\beta \, \Delta t}/{\eps}} \right]} 
\\
\,
\\
 &+&\displaystyle{ \frac{\Delta t}{\eps} \,  e^{-{\beta \, \Delta t}/{\eps}} \, \Rcal \left(-\frac{w + z}{2\, \sqrt{a}}, \frac{z - w}{2}\right) \,.}
\end{eqnarray*}

The main result of this section is to establish the quasi-monotonicity of the operator $G_{\eps,s}$, which will lead to $L^{\infty}$ an $BV$ estimates.

\begin{lemma}
\label{lemma:1}
Assume the function $\Rcal\in\C^1(\R\times\R, \R)$ satisfies (\ref{hyp:00})-(\ref{hyp:01}) and  choose $a>0$, $\beta>0$ such that  (\ref{hyp:03}) is verified. Then,
\begin{itemize}
\item[$(i)$]  the sub-characteristic condition  is satisfied for all $(w,z)\in I(N_0,a_0)$, that is,
\begin{equation}
\label{sub:01}
   \left| \frac{\du \Rcal}{\dv \Rcal}(u,v) \right| \, < \, \sqrt{a},
\end{equation}
where  $2\sqrt{a}\,u\,:=\,-(w+z)$ and $2\,v \,:=\,z-w$ ;
 
\item[$(ii)$] for all $\eps\,,\, s >0$, the source term operator $G_{\eps,s}$ is quasi-monotone  on the compact set $I(N_0,a_0)$, that is, 
\begin{equation}
\label{quasi:01}
\left\{
\begin{array}{l}
-1\,\leq \,\partial_w G_{\eps,\,s}(w,z) \, \leq \, 0,\,\,\, \forall\, (w, z)\,\in\,I(N_0,a_0), 
\\
\,
\\
\,\,\,0\,\leq\, \,\partial_z G_{\eps,\,s}(w,z) \, \leq \, 1,\,\,\, \forall\, (w, z)\,\in\,I(N_0,a_0);
\end{array}\right.
\end{equation}
\item[$(iii)$] consider for $i=1,2$,  $(w_i^{n+1}, z_i^{n+1})$ two solutions to (\ref{sch:03}) corresponding to 
two initial data $(w_i^{n+1/2}, z_i^{n+1/2})\in I(N_0,a_0)$. Then there exist $w$ and $z \in \R$ such that $|w|$, $|z|\leq \sqrt{a}\,V(N_0,a_0)$ and
\begin{equation}\label{quasi:02}
\left\{
\begin{array}{lll}
  w_1^{n+1} -w_2^{n+1} &=& \left(w_1^{n+1/2} - w_2^{n+1/2}\right)\, \left(1 + \dw G_{\eps,s} (w, z_1^{n+1/2})\right) 
\\
\,
\\
&+&  \left(z_1^{n+1/2} - z_2^{n+1/2}\right)\, \dz G_{\eps,s} (w^{n+1/2}_2, z)\,,
\\
\,
\\
 z_1^{n+1} -z_2^{n+1} &=& \left(z_1^{n+1/2} - z_2^{n+1/2} \right)\, \left(1 - \dz G_{\eps,s} (w^{n+1/2}_2, z)\right) 
\\
\,
\\
&-&  \left(w_1^{n+1/2} - w_2^{n+1/2}\right)\, \dw G_{\eps,s} (w, z_1^{n+1/2})\,.
\end{array}
\right.
 \end{equation}
\end{itemize} 
\end{lemma}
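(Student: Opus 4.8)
The plan is to dispatch the three items in order, since $(ii)$ and $(iii)$ both reduce to elementary calculus once $G_{\eps,s}$ is rewritten in the physical variables $u,v$. For $(i)$, I would note that any $(w,z)\in I(N_0,a_0)$ satisfies $|w|,|z|\le\sqrt{a}\,V(N_0,a_0)$, so through $2\sqrt{a}\,u=-(w+z)$ we get $|u|\le V(N_0,a_0)$. Hence hypothesis (\ref{hyp:01}) applies with $U_0=V(N_0,a_0)$, giving $|\du\Rcal|\le g(V(N_0,a_0))$ and $\dv\Rcal\ge\beta_0(V(N_0,a_0))>0$, whence
$$\left|\frac{\du\Rcal}{\dv\Rcal}(u,v)\right|\,\le\,\frac{g(V(N_0,a_0))}{\beta_0(V(N_0,a_0))}\,<\,\sqrt{a},$$
the last inequality being exactly the first line of (\ref{hyp:03}).

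For $(ii)$ I would abbreviate the two time coefficients in $G_{\eps,s}$ by $\gamma:=1-(1+\frac{\beta s}{\eps})e^{-\beta s/\eps}$ and $\mu:=\frac{s}{\eps}e^{-\beta s/\eps}$, so that $G_{\eps,s}(w,z)=\gamma\,(v-A(u))+\mu\,\Rcal(u,v)$ with $u=-\frac{w+z}{2\sqrt{a}}$, $v=\frac{z-w}{2}$. The one elementary identity that makes everything close is
$$\gamma+\beta\mu\,=\,1-e^{-\beta s/\eps}\,\in\,[0,1),$$
which I would record first, together with $0\le\gamma<1$. Differentiating by the chain rule, using $\dw u=\dz u=-\frac{1}{2\sqrt{a}}$, $\dw v=-\frac12$, $\dz v=\frac12$ and $A'(u)=-\du\Rcal(u,A(u))/\dv\Rcal(u,A(u))$ from (\ref{sub:00}) (valid for all $u$ since $\Rcal(u,A(u))\equiv 0$), yields the symmetric form
$$\dw G_{\eps,s}=-P+Q,\qquad \dz G_{\eps,s}=P+Q,$$
where $P:=\tfrac12(\gamma+\mu\,\dv\Rcal)$ and $Q:=\tfrac{1}{2\sqrt{a}}(\gamma A'(u)-\mu\,\du\Rcal)$.

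The required bounds then follow from two observations. Using $\beta_0\le\dv\Rcal\le\beta=h(V(N_0,a_0))$ one has $\tfrac12(\gamma+\mu\beta_0)\le P\le\tfrac12(\gamma+\beta\mu)<\tfrac12$; and since $|A'(u)|<\sqrt{a}$ by $(i)$ while $g/\sqrt{a}<\beta_0\le\beta$ by (\ref{hyp:03}), one gets $|Q|\le\tfrac{\gamma}{2}+\tfrac{\mu g}{2\sqrt{a}}$. Combining gives $\dw G\ge -\tfrac12(\gamma+\beta\mu)-\tfrac{\gamma}{2}-\tfrac{\beta\mu}{2}=-(\gamma+\beta\mu)=-(1-e^{-\beta s/\eps})>-1$ and $\dw G\le\tfrac{\mu}{2}(g/\sqrt{a}-\beta_0)<0$, and symmetrically $\dz G\ge\tfrac{\mu}{2}(\beta_0-g/\sqrt{a})>0$ and $\dz G\le\gamma+\beta\mu<1$. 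This is the heart of the lemma and the main obstacle is precisely this bookkeeping: one must spend the sub-characteristic gap $g/\sqrt{a}<\beta_0$ to control $Q$ and the upper bound $\dv\Rcal\le\beta$ to absorb $P$ into $\gamma+\beta\mu$, so that the exponential identity pins the estimates at exactly $-1$ and $1$.

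Finally, for $(iii)$ I would subtract the two instances of (\ref{sch:03}) and split the increment of $G$ through the intermediate point $(w_2^{n+1/2},z_1^{n+1/2})$:
$$G(w_1^{n+1/2},z_1^{n+1/2})-G(w_2^{n+1/2},z_2^{n+1/2})=\big[G(w_1^{n+1/2},z_1^{n+1/2})-G(w_2^{n+1/2},z_1^{n+1/2})\big]+\big[G(w_2^{n+1/2},z_1^{n+1/2})-G(w_2^{n+1/2},z_2^{n+1/2})\big].$$
The mean value theorem in $w$ for the first bracket and in $z$ for the second produces $w$ between $w_1^{n+1/2},w_2^{n+1/2}$ and $z$ between $z_1^{n+1/2},z_2^{n+1/2}$ giving the brackets $\dw G(w,z_1^{n+1/2})(w_1^{n+1/2}-w_2^{n+1/2})$ and $\dz G(w_2^{n+1/2},z)(z_1^{n+1/2}-z_2^{n+1/2})$; inserting these into $w_i^{n+1}=w_i^{n+1/2}+G(\cdots)$ and $z_i^{n+1}=z_i^{n+1/2}-G(\cdots)$ reproduces (\ref{quasi:02}) verbatim, and the bounds $|w|,|z|\le\sqrt{a}\,V(N_0,a_0)$ hold because $w,z$ lie on the segments joining points of $I(N_0,a_0)$. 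The only regularity point to address is that $A$ is merely locally Lipschitz, so $G$ is $\mathcal{C}^1$ only where $A$ is differentiable; I would either invoke differentiability of $A$ (consistent with Remark~\ref{rem:00}) or replace the pointwise mean value theorem by the integral form $\int_0^1\dw G\,\ud t$, which inherits the same bounds from $(ii)$.
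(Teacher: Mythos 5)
Your proof is correct and follows essentially the same route as the paper: part $(i)$ is identical, part $(ii)$ is the paper's computation of $\partial_w G_{\eps,s}$ and $\partial_z G_{\eps,s}$ merely repackaged via your $P$ and $Q$ (with the same inputs: the sub-characteristic condition, $\beta_0\le\partial_v\Rcal\le\beta$, and the identity $\gamma+\beta\mu=1-e^{-\beta s/\eps}$), and part $(iii)$ spells out the ``first order Taylor expansion'' that the paper invokes in one line, with exactly the intermediate-point splitting that the stated form of (\ref{quasi:02}) requires. Your side remark on the regularity of $A$ is well taken but harmless, since $\Rcal\in\C^1$ with $\partial_v\Rcal\ge\beta_0>0$ makes $A$ a $\C^1$ function by the implicit function theorem, so $G_{\eps,s}$ is genuinely $\C^1$ and the pointwise mean value theorem applies.
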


\begin{proof}
For any $N_0>0$ and $a_0>0$, we first observe that for $(w,z)\in I(N_0,a_0)$,  
$$
|u| \,=\, \frac{|w+z|}{2\sqrt{a}} \leq V(N_0,a_0).
$$
Therefore, using the assumption (\ref{hyp:01}) and the definition (\ref{bound:00}), we   get that
$$
\left|\frac{\du \Rcal}{\dv \Rcal}(u,v)\right| \, \leq \, \frac{g\left(V(N_0,a_0)\right)}{\beta_0\left(V(N_0,a_0)\right)}\, < \, \sqrt{a}\,,
$$
which proves the first assertion $(i)$.

Now we prove $(ii)$ the quasi-monotonicity property of $G_{\eps,s}$. Computing the partial derivatives of $G_{\eps,s}$, it yields for all $s>0$,
\begin{equation*}
\left\{
\begin{array}{l}
  \displaystyle{\dw G_{\eps,s}  \,=\, -\frac{1}{2}\left(1 - \frac{A'(u)}{\sqrt{a}}\right) \, \left[1- \left(1+\frac{\beta \, s}{\eps}\right)\,e^{-{\beta \, s}/{\eps}} \right] -  \frac{s}{2\,\eps}\,e^{-{\beta \, s}/{\eps}}\, \left(\frac{\du \Rcal}{\sqrt{a}} + {\dv \Rcal}\right)\,,}
\\
\,
\\
\displaystyle{\dz G_{\eps,s} \,=\, +\frac{1}{2} \left(1+ \frac{A'(u)}{\sqrt{a}}\right) \, \left[1- \left(1+\frac{\beta \, s}{\eps}\right)\,e^{-{\beta \, s}/{\eps}} \right] +  \frac{s}{2\,\eps}\,e^{-{\beta \, s}/{\eps}}\, \left(\frac{- \du \Rcal }{\sqrt{a}} + {\dv \Rcal}\right)\,.}
\end{array}
\right.
\end{equation*}

Hence,  from Remark~\ref{rem:00} and the sub-characteristic condition (\ref{sub:01}), we obtain that for all $(u,v)~\in~ I(N_0,a_0)$
$$ 
\dw G_{\eps,s}(w,z)  \,\,\leq\,\, 0\quad {\rm and }\quad \dz G_{a,s}(w,z) \,\,\geq\,\, 0\,.
$$
Moreover, still using condition (\ref{sub:01}), we also get  for all $(w,z)\in I(N_0,a_0)$
$$ \left\{
\begin{array}{l}
 \displaystyle{ \dw G_{\eps,s}(w,z) \,\,\geq\,\,  -\left[1 - \frac{\beta s}{\eps}\, e^{-\beta s/\eps} \right] \,-\, \partial_v \Rcal(u,v)\,\frac{s}{\eps} \, e^{-\beta s/\eps} \,,}
\\
\,
\\
\displaystyle{ \dz G_{\eps,s}(w,z) \leq  \left[1 - \frac{\beta s}{\eps}\, e^{-\beta s/\eps} \right] \,+\, \partial_v \Rcal(u,v)\,\frac{s}{\eps} \, e^{-\beta s/\eps}\,.}
\end{array}
\right.
$$

Now since  $|u|\,\leq\,V(N_0,a_0)$ and from the choice of the parameter $\beta$ in (\ref{hyp:03}), it yields that $|\partial_v \Rcal(u,v)|\leq \beta$. Therefore, we conclude that
$$
-1 \,\,\leq\,\, \dw G_{\eps,s}(w,z), \quad \dz G_{\eps,s}(w,z) \,\,\leq\,\, 1, \quad \forall(w,z)\in I(N_0,a_0).  
$$

Finally  $(iii)$ follows from  a first order Taylor expansion of $G_{\eps, s}$. 
\end{proof}
This Lemma allows to obtain the following comparison principle.

\begin{corollary}\label{coro:1}
Consider for $i=1,2$, two initial data $(w_i^{n+1/2}, z_i^{n+1/2})\in I(N_0,a_0)$ satisfying the monotonicity condition  
$$
w_1^{n+1/2} \leq w_2^{n+1/2} \quad {\rm and }\quad  z_1^{n+1/2} \leq z_2^{n+1/2}.
$$ 
Then,  the numerical solution $(w_i^{n+1},z_i^{n+1})$, given by (\ref{sch:03}) corresponding to the initial data $(w_i^{n+1/2},z_i^{n+1/2})$ for $i=1,2$,  satisfies
$$
w_1^{n+1} \leq w_2^{n+1} \quad {\rm and }\quad  z_1^{n+1} \leq z_2^{n+1}.
$$ 
\end{corollary}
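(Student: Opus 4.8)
The plan is to read the result off directly from the representation formula (\ref{quasi:02}) in Lemma~\ref{lemma:1}$(iii)$ combined with the sign bounds (\ref{quasi:01}) of part $(ii)$; no new estimate is needed, so the whole argument reduces to a sign chase. First I would record the two inequalities furnished by the hypothesis, namely $w_1^{n+1/2} - w_2^{n+1/2} \le 0$ and $z_1^{n+1/2} - z_2^{n+1/2} \le 0$, since these are the only facts about the data that enter.

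Before invoking (\ref{quasi:01}), I would verify that the two intermediate points at which the partial derivatives are evaluated in (\ref{quasi:02}), namely $(w, z_1^{n+1/2})$ and $(w_2^{n+1/2}, z)$, actually belong to the compact set $I(N_0,a_0)$ on which quasi-monotonicity is asserted. This is precisely where the technical content of Lemma~\ref{lemma:1}$(iii)$ is used: it guarantees $|w|, |z| \le \sqrt{a}\,V(N_0,a_0)$, while the assumption $(w_i^{n+1/2}, z_i^{n+1/2}) \in I(N_0,a_0)$ yields $|w_i^{n+1/2}|, |z_i^{n+1/2}| \le \sqrt{a}\,V(N_0,a_0)$. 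Since $I(N_0,a_0)$ is the square $[-\sqrt{a}\,V(N_0,a_0), \sqrt{a}\,V(N_0,a_0)]^2$ by (\ref{bound:01}), each mixed point has both coordinates within the required bound and hence lies in $I(N_0,a_0)$, so (\ref{quasi:01}) may be applied at these points.

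Then for the $w$-equation I would combine $0 \le 1 + \dw G_{\eps,s} \le 1$ with $w_1^{n+1/2} - w_2^{n+1/2} \le 0$ to see that the first term of (\ref{quasi:02}) is non-positive, and $0 \le \dz G_{\eps,s} \le 1$ with $z_1^{n+1/2} - z_2^{n+1/2} \le 0$ to see that the second is non-positive; summing gives $w_1^{n+1} \le w_2^{n+1}$. For the $z$-equation, $0 \le 1 - \dz G_{\eps,s} \le 1$ multiplied by $z_1^{n+1/2} - z_2^{n+1/2} \le 0$ disposes of the first term, while for the second I would note that $\dw G_{\eps,s} \le 0$ times $w_1^{n+1/2} - w_2^{n+1/2} \le 0$ is non-negative, so its negative is non-positive; this yields $z_1^{n+1} \le z_2^{n+1}$.

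There is no genuine obstacle here, as the corollary is essentially a reformulation of the quasi-monotonicity established in Lemma~\ref{lemma:1}. The only step deserving explicit care — and the one I would spell out — is the membership of the two intermediate evaluation points in $I(N_0,a_0)$, since the bounds (\ref{quasi:01}) are valid only on that compact set; once this is in place, the remainder is a mechanical sign analysis.
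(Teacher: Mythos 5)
Your proposal is correct and follows exactly the paper's route: the paper's proof is the one-line remark that the result follows from the identity (\ref{quasi:02}) together with the bounds (\ref{quasi:01}), which is precisely the sign chase you carry out (your extra care about the intermediate evaluation points lying in $I(N_0,a_0)$ is a detail the paper leaves implicit in Lemma~\ref{lemma:1}$(iii)$).
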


\begin{proof}
 Starting from the equality (\ref{quasi:02}), it yields to the result applying the estimates (\ref{quasi:01}).
\end{proof}

\subsection{\bf $L^\infty$ estimates}

We establish a uniform bound on the numerical solution to the scheme (\ref{sch:00})-(\ref{sch:01} with respect to the time-space step $h = (\Delta t, \Delta x)$ such that (\ref{CFL}) is satisfied.

\begin{proposition}
\label{prop:01}
Consider any $a_0>0$ and 
$$
\displaystyle{N_0  = \max \left\{ \underset{\eps > 0 }{\sup} \| u_0\|_{\infty},  \underset{\eps > 0 }{\sup} \| v_0\|_{\infty} \right\} \,.}
$$
We assume that the function $\Rcal\in\C^1(\R\times\R, \R)$ satisfies (\ref{hyp:00})-(\ref{hyp:01}) and  choose $a>0$, $\beta>0$ such that  (\ref{hyp:03}) is verified. Then, for all $n\in\N$
$$
\|u^n\|_{\infty} \,\leq \, V(N_0,a_0), \quad \|v^n\|_{\infty} \,\leq \, \sqrt{a}\,V(N_0,a_0). 
$$
\end{proposition}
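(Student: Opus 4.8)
\emph{Strategy.} The plan is to prove the equivalent statement in the diagonal variables, namely that the numerical solution stays in the compact box $I(N_0,a_0)$, i.e. $|w_j^n|,|z_j^n|\le\sqrt a\,V(N_0,a_0)$ for all $j,n$, by induction on $n$ over the two half-steps. Since $u=-(w+z)/(2\sqrt a)$ and $v=(z-w)/2$, a bound $|w_j^n|,|z_j^n|\le\sqrt a\,V(N_0,a_0)$ gives at once $\|u^n\|_\infty\le V(N_0,a_0)$ and $\|v^n\|_\infty\le\sqrt a\,V(N_0,a_0)$, which is the claim. For the base case, the averaging (\ref{sch:ini}) yields $|u_j^0|,|v_j^0|\le N_0$, hence $|w_j^0|,|z_j^0|\le(1+\sqrt a)\,N_0$; the choice $\sqrt a\ge\sqrt{a_0}$ in (\ref{hyp:03}) together with the definitions (\ref{bound:00}) of $U$ and $V$ makes $(1+\sqrt a)\,N_0\le\sqrt a\,V(N_0,a_0)$, so $(w_j^0,z_j^0)\in I(N_0,a_0)$.

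\emph{Transport half-step.} Written on $(w,z)$, the scheme (\ref{sch:02}) is the pure upwind scheme, and under the CFL condition (\ref{CFL}) each update is a convex combination of neighbouring values. Hence it is $L^\infty$-nonincreasing on each variable separately, $\|w^{n+1/2}\|_\infty\le\|w^n\|_\infty$ and $\|z^{n+1/2}\|_\infty\le\|z^n\|_\infty$, so the box $I(N_0,a_0)$ is preserved; in particular $|u_j^{n+1/2}|\le V(N_0,a_0)$ and $|v_j^{n+1/2}|\le\sqrt a\,V(N_0,a_0)$.

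\emph{Relaxation half-step.} This is where I would use Lemma~\ref{lemma:1}. Two facts are central: first, (\ref{sch:03}) leaves $u$ unchanged, $u_j^{n+1}=u_j^{n+1/2}$, so the $u$-bound is propagated for free; second, a first-order Taylor expansion of $\Rcal(u,\cdot)$ around $A(u)$ (using $\Rcal(u,A(u))=0$) shows that the $v$-update can be written as the convex combination $v_j^{n+1}=(1-\gamma)\,A(u_j^{n+1/2})+\gamma\,v_j^{n+1/2}$ with weight $\gamma=e^{-\beta\Delta t/\eps}\bigl[1+\frac{\Delta t}{\eps}(\beta-\partial_v\Rcal)\bigr]\in[0,1]$, the membership in $[0,1]$ following from $0<\partial_v\Rcal\le\beta$ (guaranteed by (\ref{hyp:01}) and the choice $\beta=h(V(N_0,a_0))$ in (\ref{hyp:03})) together with the elementary inequality $(1+s)e^{-s}\le1$. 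The sub-characteristic estimate Lemma~\ref{lemma:1}$(i)$ gives $|A(u_j^{n+1/2})|\le\sqrt a\,|u_j^{n+1/2}|\le\sqrt a\,V(N_0,a_0)$, so the convex combination keeps $|v_j^{n+1}|\le\sqrt a\,V(N_0,a_0)$. Equivalently, one can invoke the comparison principle Corollary~\ref{coro:1} and compare $(w_j^{n+1/2},z_j^{n+1/2})$ with constant states lying on the equilibrium manifold $\{v=A(u)\}$, which are fixed points of the whole scheme.

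\emph{Main obstacle.} The delicate point in closing the induction is that the \emph{next} transport step requires control of $w$ and $z$ \emph{individually} after relaxation, whereas the easy arguments above directly bound only $u$ and $v$. Since the relaxation drives $v$ toward $A(u)$, it can a priori enlarge $|z|=|v-\sqrt a\,u|$ near the corners of $I(N_0,a_0)$, so the preservation of the box under (\ref{sch:03}) is not immediate. I expect the resolution to rest on the quasi-monotonicity estimates (\ref{quasi:01}) of Lemma~\ref{lemma:1}$(ii)$ combined with the precise calibration of the constants — in particular the slack $V(N_0,a_0)-U(N_0,a_0)=F(N_0,a_0)/\sqrt{a_0}$ built into (\ref{bound:00}) — which is exactly what must absorb the boundary growth produced by the source term. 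This interplay between the sub-characteristic condition and the tuned constants is the technical crux, while the transport step and the base case are routine.
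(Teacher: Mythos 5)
Your setup (diagonal variables, induction over the two half-steps, $L^\infty$-stability of the upwind transport step under CFL, and the rewriting of the relaxation update as the convex combination $v_j^{n+1}=(1-\gamma)A(u_j^{n+1/2})+\gamma\,v_j^{n+1/2}$ with $\gamma\in[0,1]$) is all correct and matches computations that appear in the paper. But the obstacle you flag at the end is not a technicality to be absorbed by ``calibration of the constants'': it is the actual content of the proposition, and your argument does not close. Bounding $\|u^{n+1}\|_\infty\le V$ and $\|v^{n+1}\|_\infty\le\sqrt a\,V$ separately only yields $\|w^{n+1}\|_\infty,\|z^{n+1}\|_\infty\le 2\sqrt a\,V$, so the inductive hypothesis $(w^{n+1},z^{n+1})\in I(N_0,a_0)$ is lost and the next transport step cannot be run. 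The quasi-monotonicity estimates (\ref{quasi:01}) alone will not repair this, because they control differences of two solutions, not the absolute size of one; and your parenthetical fallback --- comparing with constant states on the equilibrium manifold, which are fixed points --- is not what is needed either: to dominate both $w^0$ and $z^0$ by an equilibrium state $(-A(u)-\sqrt a\,u,\;A(u)-\sqrt a\,u)$ one must push $u$ to values for which the required bounds on $A$ and $\nabla\Rcal$ are no longer guaranteed by (\ref{bound:00}) and (\ref{hyp:03}).

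What the paper actually does, and what is missing from your proposal, is a barrier construction. One builds two explicit spatially constant solutions of the full scheme starting from $(\overline w^0,\overline z^0)=(R_0,R_0)$ and $(\underline w^0,\underline z^0)=(-R_0,-R_0)$ with $R_0=(1+\sqrt a)N_0$; these are invariant under transport, and under relaxation their $u$-component is frozen at $|\overline u^0|\le U(N_0,a_0)$ (strictly inside the interval of radius $V$), so that $|A(\overline u^0)|\le F(N_0,a_0)$ and the $v$-component, being your convex combination iterated from $\overline v^0=0$, satisfies $|\overline v^n|\le F(N_0,a_0)$ for all $n$. This is precisely where the slack $V-U=F/\sqrt{a_0}$ is spent: it gives $|\overline w^n|,|\overline z^n|\le \sqrt a\,U+F\le\sqrt a\,V$, so the barriers stay in $I(N_0,a_0)$. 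The true solution is then sandwiched, $\underline w^n\le w_j^n\le\overline w^n$ and $\underline z^n\le z_j^n\le\overline z^n$, by the comparison principle of Corollary~\ref{coro:1} (monotonicity of the upwind step plus quasi-monotonicity of $G_{\eps,s}$), and this sandwiching is what bounds $w$ and $z$ individually and closes the induction. Without this step your argument proves the statement only for $n=0$ and the first half-step.
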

\begin{proof}
We proceed in two  steps and first construct  a particular solution $(\overline w^n,\overline z^n)\in \R^2$ to the scheme (\ref{sch:02})-(\ref{sch:03}) which is uniformly bounded, then we apply the comparison principle on the compact set $I(N_0,a_0)$ to prove an $L^\infty$ bound on $(u^n,v^n)$.

To this aim we choose $R_0=(1+\sqrt{a})\,N_0$ and  construct a numerical solution $(\overline w^n, \overline z^n)$ to (\ref{sch:02})-(\ref{sch:03}) corresponding to the initial data $(\overline w^0,\overline z^0)=(R_0,R_0)$, which does not depend on the space variable so that the transport step (\ref{sch:02}) is invariant. Then we apply the relaxation scheme (\ref{sch:03}), which yields 
$$
\overline w^n\,=\, -\overline v^n \,-\, \sqrt{a}\, \overline u^n,\quad 
\overline z^n\,=\, +\overline v^n\,-\, \sqrt{a} \,\overline u^n
$$  
where $(\overline u^n,\overline v^n)$ are only given by
$$
\left\{
\begin{array}{lll}
\overline u^n &=& \displaystyle{\overline u^0=-\frac{R_0}{\sqrt{a}}=\left(1+\frac{1}{\sqrt a}\right)\,N_0,}
\\
\,
\\
\overline v^n &=& \displaystyle{\left(1 + \frac{\beta\,\Delta t}{\eps}\right) \, e^{-\beta \Delta t/\eps}\, \overline v^{n-1} \,\,+\,\, \left(1-\left(1 + \frac{\beta\,\Delta t}{\eps}\right)\, e^{-\beta \Delta t/\eps} \right) A\left(\overline u^0\right)}
\\
\,
\\
&-& \displaystyle{\frac{\Delta t}{\eps}\, e^{-\beta \Delta t/\eps}\, \Rcal\left(\overline u^0,\overline v^{n-1}\right).}
\end{array}\right.
$$ 
Then, we proceed by induction to show that:
$$  \displaystyle \forall \, n \, \in \, \{0, \dots , N\}\,, \quad (\overline w^{n},\overline z^{n})\in I(N_0,a_0)\,. $$

We  assume that   $(\overline w^{n-1},\overline z^{n-1})\in I(N_0,a_0)$, for some $n\geq1$. Let us  prove that $(\overline w^{n},\overline z^{n})\in I(N_0,a_0)$. On the one hand since $\overline u^n=\overline u^0$,  it  yields 
$$
\|\overline u^n\|_{\infty} \,=\,\|\overline u^0\|_{\infty} \,\leq\, \left[  1\,+\,\frac{1}{\sqrt{a}} \right]\,N_0\,\leq \,U(N_0,a_0),
$$ 
where $U(N_0,a_0)$ is given by (\ref{bound:00}).

On the other hand, using a first order Taylor expansion of the source term $\Rcal(\overline u^0,.)$, we get that there exists $\tilde{v}^{n-1}\in \R$ such that 
\begin{eqnarray*}
\overline v^n &=& \left(1 + \frac{\beta-\partial_v\Rcal(\overline u^0,\tilde{v}^{k})}{\eps}\,\Delta t\right) \, e^{-\beta \Delta t/\eps}\,\, \overline v^{n-1} 
\\
&+& \left(1-\left(1 + \frac{\beta-\partial_v\Rcal(\overline u^0,\tilde{v}^{k})}{\eps}\,\Delta t\right)\, e^{-\beta \Delta t/\eps} \right) A\left(\overline u^0\right).
\end{eqnarray*}
Therefore,  denoting by $\lambda_k\in\R$, the real number such that
$$
\lambda_k:= \left(1 + \frac{\beta-\partial_v\Rcal(\overline u^0,\tilde{v}^{k})}{\eps}\,\Delta t\right)\, e^{-\beta \Delta t/\eps},\qquad \forall k\in\N,
$$ 
with $|\tilde{v}^{k}|\leq F(N_0,a_0)$, hence we get  
$$
\overline v^n = \lambda_{n-1} \,\overline v^{n-1} \,\,+\,\, (1-\lambda_{n-1})\, A(\overline u^0) 
$$
and since $\overline v^0=0$
$$
 \overline v^n \,\,=\,\, \left(1-\prod_{k=0}^{n-1}\lambda_k\right)\, A(\overline u^0).
$$
Moreover, using that $|\overline u^0|\leq U(N_0,a_0)$ and $\tilde{v}^{k}\leq \sqrt{a}\,V(N_0,a_0)$, for all $k\in\N$, we get from (\ref{hyp:01}) and (\ref{hyp:03}),
$$
0\,<\,\left(1 + \frac{\beta-\beta_0}{\eps}\,\Delta t\right)\, e^{-\beta \Delta t/\eps}\,\leq\, \lambda_k \,\leq\,\left(1 + \frac{\beta\,\Delta t}{\eps}\,\right)\, e^{-\beta \Delta t/\eps} \,<\,1,\quad \forall k\in\N.
$$
Therefore, $\|\overline v^n\|_{\infty} \leq F(N_0,a_0)$ and 
$$
\|\overline w^n\|_{\infty} \,, \,\, \|\overline z^n\|_{\infty} \,\,\leq\,\, F(N_0,a_0) \,+\, \,(1+\sqrt{a})\,N_0 \leq \sqrt{a}\,V(N_0,a_0),  
$$
that is, $(\overline w^n,\overline z^n)\in I(N_0,a_0)$.
 
Furthermore, starting from the following initial datum $(\underline w^0,\underline z^0)=(-R_0,-R_0)$, we can also construct another particular solution $(\underline w^n,\underline z^n)\in I(N_0,a_0)$ for all $n\in \{0, \dots, N\}$.

Now, we proceed to the second step which consists in applying the comparison principle of Corollary \ref{coro:1} to prove an $L^{\infty}$ estimate for any initial data $u^0$, $v^0\in L^\infty(\R)$ given by (\ref{sch:ini}). From the definition of $N_0$, we have
$$
 \|u^0\|_{\infty}, \,\,\|v^0\|_{\infty}\, \, \leq \, \, N_0.
$$ 
Then, we have for the initial data $(w^0,z^0)$ 
$$
\|w^0\|_{\infty}\,,\,\,\|z^0\|_{\infty} \,\leq \, (1+\sqrt{a})\,N_0 = R_0 \,\leq \sqrt{a}\,V(N_0,a_0).
$$ 
In other words, we have initially:
$$
\underline w^0\,\leq\, w^0 \,\,\leq\,\, \overline w^0,\quad \underline z^0\,\leq\,z^0\,\,\leq\,\, \overline z^0\,.
$$

Thus, we proceed by induction and assume that 
$$
\underline w^n\,\leq\, w^n \,\,\leq\,\, \overline w^n,\quad \underline z^n\,\leq\,z^n\,\,\leq\,\, \overline z^n\,.
$$
We first consider the linear transport step (\ref{sch:02}) to $(w^n,z^n)$ and get that
$$
\underline w^n\,\leq\, w^{n+1/2} \,\,\leq\,\, \overline w^n,\quad \underline z^n\,\leq\,z^{n+1/2}\,\,\leq\,\, \overline z^n\,.
$$
Thus, we apply Corollary \ref{coro:1} to the two solutions to (\ref{sch:03}) associated to the initial conditions
$(w_1^{n+1/2}, z_1^{n+1/2}) = (w^{n+1/2}, z^{n+1/2})$ and $(w_2^{n+1/2}, z_2^{n+1/2}) = (\underline w^n, \underline z^n)\, $ (and then  $(\overline w^n, \overline z^n)$), we have

$$
\underline w^{n+1}\,\leq\, w^{n+1} \,\,\leq\,\, \overline w^{n+1},\quad \underline z^{n+1}\,\leq\,z^{n+1}\,\,\leq\,\, \overline z^{n+1},
$$
which finally gives for all $n\in\N$, that $(w^n,z^n)\in I(N_0,a_0)$. By construction of $(u^n,v^n)$ we have proved that
$$
\|u^n\|_{\infty} \,\leq \, V(N_0,a_0), \quad \|v^n\|_{\infty} \,\leq \, \sqrt{a}\,V(N_0,a_0). 
$$
\end{proof}

\subsection{\bf $BV$ estimates}
In this section, we obtain a $BV$ estimate on the numerical solution to the scheme (\ref{sch:00})-(\ref{sch:01}) with the time-space step $h = (\Delta t, \Delta x)$ such that (\ref{CFL}) is satisfied.

\begin{proposition}
\label{prop:02}
Assume that $u_0,v_0$ are uniformly bounded with respect to $\eps$ in  $BV(\R)$. For any $a_0>0$ and $\displaystyle N_{0} = \max \left\{ \underset{\eps > 0 }{\sup} \| u_0\|_{\infty},  \underset{\eps > 0 }{\sup} \| v_0\|_{\infty} \right\}$, we assume that the function $\Rcal\in\C^1(\R\times\R, \R)$ satisfies (\ref{hyp:00})-(\ref{hyp:01}) and  choose $a>0$, $\beta>0$ such that  (\ref{hyp:03}) is verified. Then, for all $n \, \in \, \N$, we have:
$$
TV (w^{n+1}) \,+\,  TV (z^{n+1})  \,\, \leq \,\, TV (w^{n}) \,+\,TV (z^{n}). 
$$
\end{proposition}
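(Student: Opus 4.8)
The plan is to prove the combined bound in two stages that mirror the splitting, estimating the transport step \eqref{sch:02} and the relaxation step \eqref{sch:03} separately and then chaining the two. I would first show that the transport step is total-variation diminishing for $w$ and $z$ individually, and then that the relaxation step does not increase the \emph{sum} $TV(w)+TV(z)$, even though it couples the two variables and need not be monotone for either one alone. Combining
$$
TV(w^{n+1}) + TV(z^{n+1}) \,\le\, TV(w^{n+1/2}) + TV(z^{n+1/2}) \,\le\, TV(w^{n}) + TV(z^{n})
$$
then gives the claim.

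For the transport step I would use that, under the CFL condition \eqref{CFL} with $\lambda=\sqrt{a}\,\Delta t/\Delta x\in(0,1)$, the upwind update \eqref{sch:02} writes each new value as a convex combination of neighboring old values, namely $w_j^{n+1/2}=(1-\lambda)\,w_j^{n}+\lambda\,w_{j-1}^{n}$ and $z_j^{n+1/2}=(1-\lambda)\,z_j^{n}+\lambda\,z_{j+1}^{n}$. Differencing between cells $j$ and $j+1$, using $1-\lambda,\lambda\ge 0$ and the triangle inequality, then summing over $j$ telescopes to $TV(w^{n+1/2})\le TV(w^{n})$ and $TV(z^{n+1/2})\le TV(z^{n})$. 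This is the standard monotone/upwind TVD estimate, and I expect it to be routine.

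The heart of the argument is the relaxation step, where I would apply Lemma \ref{lemma:1} $(iii)$ with the two ``solutions'' taken to be the values in adjacent cells, i.e. $(w_1^{n+1/2},z_1^{n+1/2})=(w_{j+1}^{n+1/2},z_{j+1}^{n+1/2})$ and $(w_2^{n+1/2},z_2^{n+1/2})=(w_{j}^{n+1/2},z_{j}^{n+1/2})$. By Proposition \ref{prop:01} both lie in $I(N_0,a_0)$, so the increment identity \eqref{quasi:02} holds with intermediate points $w,z$ satisfying $|w|,|z|\le\sqrt{a}\,V(N_0,a_0)$, whence $(w,z_{j+1}^{n+1/2})$ and $(w_j^{n+1/2},z)$ also lie in $I(N_0,a_0)$. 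Writing $p_j:=\dw G_{\eps,\Delta t}(w,z_{j+1}^{n+1/2})$ and $q_j:=\dz G_{\eps,\Delta t}(w_{j}^{n+1/2},z)$, the quasi-monotonicity bounds \eqref{quasi:01} give $-1\le p_j\le 0$ and $0\le q_j\le 1$. Taking absolute values in \eqref{quasi:02} and using $1+p_j\ge0$, $q_j\ge0$, $1-q_j\ge0$ and $-p_j\ge0$, I obtain, with $\Delta w_j:=w_{j+1}^{n+1/2}-w_j^{n+1/2}$ and $\Delta z_j:=z_{j+1}^{n+1/2}-z_j^{n+1/2}$,
$$
\left|w_{j+1}^{n+1}-w_j^{n+1}\right| \le (1+p_j)\,|\Delta w_j| + q_j\,|\Delta z_j|, \qquad \left|z_{j+1}^{n+1}-z_j^{n+1}\right| \le (-p_j)\,|\Delta w_j| + (1-q_j)\,|\Delta z_j|.
$$

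The key observation, and the reason the estimate closes, is that adding the two inequalities makes the coupling coefficients cancel exactly: the $|\Delta w_j|$ term carries total weight $(1+p_j)+(-p_j)=1$ and the $|\Delta z_j|$ term total weight $q_j+(1-q_j)=1$, so
$$
\left|w_{j+1}^{n+1}-w_j^{n+1}\right| + \left|z_{j+1}^{n+1}-z_j^{n+1}\right| \,\le\, |\Delta w_j| + |\Delta z_j|.
$$
Summing over $j\in\Z$ gives $TV(w^{n+1})+TV(z^{n+1})\le TV(w^{n+1/2})+TV(z^{n+1/2})$, and combining with the transport estimate completes the proof. The main subtlety to watch is exactly this point: the relaxation step is \emph{not} TVD for $w$ or $z$ separately, since the off-diagonal contributions $q_j|\Delta z_j|$ and $(-p_j)|\Delta w_j|$ genuinely appear, so one must argue on the sum and rely on the exact cancellation above. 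That cancellation hinges in turn on the matched sign structure $-1\le\dw G\le 0\le\dz G\le 1$ from Lemma \ref{lemma:1} and on all iterates, together with the intermediate points furnished by \eqref{quasi:02}, remaining in the compact set $I(N_0,a_0)$ so that Lemma \ref{lemma:1} is applicable at every step.
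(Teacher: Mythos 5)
Your proof is correct and follows essentially the same route as the paper: TVD property of the upwind transport step, then Lemma \ref{lemma:1} $(iii)$ applied to adjacent cells with the sign structure of \eqref{quasi:01} to show the relaxation step does not increase $TV(w)+TV(z)$. You merely spell out the exact cancellation $(1+\partial_w G)+(-\partial_w G)=1$ and $\partial_z G+(1-\partial_z G)=1$ that the paper leaves implicit, and correctly note the reliance on Proposition \ref{prop:01} to keep the iterates in $I(N_0,a_0)$.
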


\begin{proof}
First we note that $u^0$, $v^0\, \in \, BV(\R)$, then by construction,  $w^0$, $z^0\in \, BV(\R)$. To prove the $BV$ estimate, we proceed in two steps. On the one hand, using the $TVD$ property of the upwind scheme, we get that
$$
TV(w^{n+1/2}) \,\,\leq \,\, TV(w^n) \quad{\rm and}\quad TV(z^{n+1/2}) \,\,\leq \,\, TV(z^n).
$$ 
On the other hand,  we apply the nonlinear relaxation step (\ref{sch:03})    and  from Lemma~\ref{lemma:1} $(iii)$ with  $w_1^{n+1/2}= w^{n+1/2}_j$ , $z_1^{n+1/2}=z^{n+1/2}_j$ and  $w_2^{n+1/2}=w^{n+1/2}_{j+1}$, $z_2^{n+1/2} = z^{n+1/2}_{j+1}$, it yields for any $j\in\Z$,
$$
 |w_{j+1}^{n+1} -w_j^{n+1}| \,+\, |z_{j+1}^{n+1} -z_j^{n+1}| \,\;\leq\,\, |w_{j+1}^{n+1/2} -w_j^{n+1/2}| + |z_{j+1}^{n+1/2} -z_j^{n+1/2}|\,.
$$
Summing over $j \, \in \, \Z$, we get that
\begin{eqnarray*}
TV(w^{n+1}) \,+\, TV(z^{n+1}) &\leq& TV(w^{n+1/2})\,+\, TV(z^{n+1/2}) 
\\
&\leq & TV(w^{n})\,+\, TV(z^{n}).
\end{eqnarray*}
\end{proof}

\section{\bf Trend  to equilibrium (proof of Theorem~\ref{theo:00})}
\label{sec:deviation}
\setcounter{equation}{0}
For a sequence $u=(u_j)_{j\in\Z}$ we set
$$
\|u\|_{1} \,\,:=\,\, \sum_{j\in\Z} \Delta x\,|u_j|. 
$$ 
In this section we first focus on the asymptotic behavior of the numerical solution to (\ref{sch:00})-(\ref{sch:01}) when $\eps$ goes to zero or when times goes to infinity. Then, we prove that this numerical solution converges to a consistent  approximation of the conservation laws (\ref{equi:sys1}) when $\eps$ goes to zero. Roughly speaking, it corresponds to the limit $\PP^\eps_h\rightarrow \PP^0_h$, when $\eps\rightarrow 0$. 
 
\subsection{\bf Asymptotic behavior}

In this subsection, we drop the subscripts $\eps$ for sake of clarity and estimate the deviation to the local equilibrium $\delta^n = v^n - A(u^n)$.

\begin{proposition}
\label{prop:03}
Assume that $u_0,v_0$ are uniformly bounded with respect to $\eps$ in  $BV(\R)$. For any $a_0>0$ and $N_0$ as before, we assume that the function $\Rcal\in\C^1(\R\times\R, \R)$ satisfies (\ref{hyp:00})-(\ref{hyp:01}) and  choose $a>0$, $\beta>0$ such that  (\ref{hyp:03}) is verified. Then the deviation from the equilibrium, $\delta = v - A(u)$ satisfies for all $n \, \in \, \N$ and all $\eps>0$
\begin{equation}
\label{devi:00}
\left\{
\begin{array}{l}
\left\|\delta^{n+1/2}\right\|_{1} \,\leq\, \left\|\delta^n\right\|_{1} \,+\,  C\,\Delta t,
\\
\,
\\
\left\|\delta^{n}\right\|_{1} \,\leq\, e^{-\beta_0\,t^n/\eps} \,\left\|\delta^{0}\right\|_{1} \,+\,  \, C\,\eps.
\end{array}\right.
\end{equation}
where $C>0$ is a constant only depending  on the parameters $a$, $\beta_0$ and the $BV$ norm of the initial data.

Moreover, if $\eps< \Delta t$ then we get
\begin{equation}
\label{devi:01}
\left\|\delta^{n}\right\|_{1} \,\leq\, e^{-\beta_0\,t^n/\eps} \,\left\|\delta^{0}\right\|_{1} \,+\,  C_a\,\Delta t \,e^{-\beta_0\Delta t/\eps}.
\end{equation}
\end{proposition}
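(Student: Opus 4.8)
The plan is to track the evolution of the deviation $\delta^n = v^n - A(u^n)$ through the two half-steps of the splitting scheme. First I would treat the transport step \eqref{sch:00}, which sends $(u^n,v^n)$ to $(u^{n+1/2},v^{n+1/2})$, and I expect this to produce the first line of \eqref{devi:00}. The key point here is that $u$ is merely transported while $v$ is transported with the same stencil, so $\delta^{n+1/2}=v^{n+1/2}-A(u^{n+1/2})$ differs from the transported value of $\delta^n$ only by a commutator between the discrete transport operator $D_h$ and the nonlinear map $A$. Since $A$ is Lipschitz on the compact invariant set $I(N_0,a_0)$ provided by Proposition~\ref{prop:01}, and since $u^{n}$ has uniformly bounded total variation by Proposition~\ref{prop:02}, this commutator is controlled by $\Delta t$ times a constant depending only on $\|A'\|$, $a$ and $TV$ of the data. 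The $L^1$-contraction (TVD) property of the transport scheme, together with the Lipschitz bound, then yields $\|\delta^{n+1/2}\|_1 \le \|\delta^n\|_1 + C\,\Delta t$.

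Next I would analyze the relaxation step \eqref{sch:01}. Because $u_j^{n+1}=u_j^{n+1/2}$, the deviation evolves only through $v$, and a direct computation of $v_j^{n+1}-A(u_j^{n+1})=v_j^{n+1}-A(u_j^{n+1/2})$ from the exponential update gives
\begin{equation}
\delta_j^{n+1} \;=\; \left(1+\frac{\beta\,\Delta t}{\eps}\right) e^{-\beta\Delta t/\eps}\,\delta_j^{n+1/2}
\;-\; \frac{\Delta t}{\eps}\, e^{-\beta\Delta t/\eps}\,\Rcal\!\left(u_j^{n+1/2},v_j^{n+1/2}\right).
\end{equation}
A first-order Taylor expansion of $\Rcal(u,\cdot)$ around the equilibrium value $A(u)$, using $\Rcal(u,A(u))=0$ from \eqref{hyp:00}, writes $\Rcal(u_j^{n+1/2},v_j^{n+1/2}) = \partial_v\Rcal(u_j^{n+1/2},\xi_j)\,\delta_j^{n+1/2}$ for some intermediate $\xi_j$. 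Substituting this in gives $\delta_j^{n+1}=\mu_j\,\delta_j^{n+1/2}$ with a contraction factor $\mu_j=\bigl[1+(\beta-\partial_v\Rcal)\Delta t/\eps\bigr]e^{-\beta\Delta t/\eps}$; the bound $\beta_0\le\partial_v\Rcal\le\beta$ from \eqref{hyp:01} and the choice \eqref{hyp:03} of $\beta$ force $0<\mu_j\le \bigl(1+(\beta-\beta_0)\Delta t/\eps\bigr)e^{-\beta\Delta t/\eps}\le e^{-\beta_0\Delta t/\eps}$, the last inequality following since $1+x\le e^{x}$. Hence $\|\delta^{n+1}\|_1\le e^{-\beta_0\Delta t/\eps}\,\|\delta^{n+1/2}\|_1$.

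Combining the two half-steps yields the one-step recursion $\|\delta^{n+1}\|_1 \le e^{-\beta_0\Delta t/\eps}\bigl(\|\delta^n\|_1+C\,\Delta t\bigr)$. Iterating this geometric-type inequality from $n=0$, summing the resulting geometric series in the forcing term, and using $t^n=n\Delta t$ gives the second line of \eqref{devi:00}: the homogeneous part decays like $e^{-\beta_0 t^n/\eps}\|\delta^0\|_1$, and the accumulated source contributes $C\Delta t\sum_{k}e^{-k\beta_0\Delta t/\eps}\le C\Delta t/(1-e^{-\beta_0\Delta t/\eps})$, which is bounded by $C\eps$ (up to a constant depending on $\beta_0$) since $1-e^{-x}\ge x/(1+x)$ controls the denominator from below. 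For the refined estimate \eqref{devi:01} under $\eps<\Delta t$, I would instead keep the geometric sum in the sharper form $C_a\,\Delta t\,e^{-\beta_0\Delta t/\eps}/(1-e^{-\beta_0\Delta t/\eps})$ and observe that when $\eps<\Delta t$ the denominator is bounded below by a positive constant, so the whole forcing term collapses to $C_a\,\Delta t\,e^{-\beta_0\Delta t/\eps}$. The main obstacle I anticipate is the transport-step commutator estimate: getting a clean $C\Delta t$ bound on $\|\delta^{n+1/2}\|_1-\|\delta^n\|_1$ requires carefully exploiting both the uniform $BV$ bound (Proposition~\ref{prop:02}) and the Lipschitz continuity of $A$ on the invariant compact set, and verifying that the discrete transport operator does not amplify the $L^1$ norm of the deviation beyond the $O(\Delta t)$ consistency error.
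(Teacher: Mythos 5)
Your proposal is correct and follows essentially the same route as the paper: the same Taylor expansion of $A$ in the transport step controlled by the $BV$ bound, the same factorization $\delta_j^{n+1}=\bigl[1+(\beta-\partial_v\Rcal)\Delta t/\eps\bigr]e^{-\beta\Delta t/\eps}\,\delta_j^{n+1/2}$ in the relaxation step, and the same geometric-series induction. The only cosmetic slip is dropping the factor $e^{-\beta_0\Delta t/\eps}$ from the numerator of the geometric sum in the unrefined estimate (which, taken literally, yields $C(\eps+\Delta t)$ rather than $C\eps$); retaining it and applying your own inequality $(1+x)e^{-x}\le 1$ gives exactly the paper's bound $C\eps/\beta_0$.
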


\begin{proof}
We set for $j \in \Z$, $n \in \N$ the sequence of  deviations from the equilibrium:
 $$\displaystyle{\delta_j^n = v_j^n - A\left(u_j^n\right)\,.}$$
  We first consider the transport step (\ref{sch:00}) of the numerical scheme: for all $j\, \in \, \Z$, we apply a Taylor expansion to $A$, then  there exists $\xi_j^n$ such that $\displaystyle{\left|\xi_j^n\right| \, \leq \, V(N_0,a_0)}$ and
\begin{eqnarray*}
\delta_j^{n+1/2} &=& \delta_j^{n} - \frac{\Delta t}{2\, \Delta x}\,\left[a\,\left(u_{j+1}^n - u_{j-1}^n\right) - \sqrt{a}\,\left(v_{j+1}^n -2\, v_j^n + v_{j-1}^n\right)\right]\\
&-& \frac{\Delta t}{2\, \Delta x}\,A'(\xi_j^n) \,\left[\left(v_{j+1}^n - v_{j-1}^n\right) - \sqrt{a}\,\left(u_{j+1}^n - 2\,u_j^n + u_{j-1}^n\right)\right]\,.
\end{eqnarray*}
Thanks to the uniform $BV$ estimate, proved in Proposition~\ref{prop:02},  the sub-characteristic condition
$$
\left|A'(\xi_j^n)\right| \, < \, \sqrt{a}\,, 
$$
and the TVD property of the numerical fluxes we get the first estimate (\ref{devi:00}), by multiplying by $\Delta x$ and summing over $j\in\Z$:
\begin{equation}
\label{res:1}
 \|\delta^{n+1/2}\|_{1} \, \leq \, \|\delta^n\|_{1} + \Delta t \, C_a\,\left[TV(v^0) + \sqrt{a}\,TV(u^0)\right]\,, 
\end{equation}
where $C_a>0$ is a constant only depending on $a$. 

Then, we consider the second step of the numerical scheme (\ref{sch:01}). On the one hand, since $u^{n+1}=u^{n+1/2}$, it yields
\begin{equation*}
\delta_j^{n+1} \,= \, \delta_j^{n+1/2} \, \left[ 1 \,+\, \beta \,\frac{\Delta t}{\eps}\right]\, e^{-{ \beta \, \Delta t}/{\eps}} \,-  \,\frac{\Delta t}{\eps} \,  e^{-{\beta \, \Delta t}/{\eps}} \, \Rcal (u_j^{n+1/2}, v_j^{n+1/2}) \,. 
\end{equation*}
On the other hand, applying a Taylor expansion, since $\Rcal(u,A(u))=0$ we get that there exists $\eta $ such that $\displaystyle{\left|\eta\right| \, \leq \, \sqrt{a}\,V(N_0,a_0)}$ and:
$$
\Rcal (u_j^{n+1/2}, v_j^{n+1/2})  \,=\, \dv \Rcal(u_j^{n+1/2},\eta) \,\delta_j^{n+1/2}.
$$
Hence, we have
$$
\delta_j^{n+1}  \,=\, \delta_j^{n+1/2} \, \left[1 \,+\, \left(1- \frac{\dv \Rcal(u_j^{n+1/2},\eta)}{\beta}\right) \,\frac{\beta\Delta t}{\eps}\right]\, e^{-{ \beta \, \Delta t}/{\eps}}.
$$

Therefore  under the assumption (\ref{hyp:01}), we set for all $s\geq 0$
$$
g(s) \,=\, \left[1 + \left(1-\frac{\beta_0}{\beta}\right) \,s \right]\, e^{-s}, 
$$
for which we easily show that for all $s\in \R^+$, we have that $e^{-s} \, \leq \,  g\left(s\right) \,\leq\, e^{-\beta_0\,s/\beta}$. Hence, for  $s=\Delta t/\eps$
\begin{equation}
\label{res:2} 
\left\|\delta^{n+1}\right\|_{1} \, \leq \, e^{-\beta_0\,\Delta t/\eps} \, \|\delta^{n+1/2}\|_{1}\,. 
\end{equation}
Finally, gathering (\ref{res:1}) and (\ref{res:2}), we obtain that there exists  a constant $C_1>0$  depending only on $a$, $TV(u^0)$ and $TV(v^0)$ such that
$$
\left\|\delta^{n+1}\right\|_{1}\, \leq \, e^{-\beta_0\,\Delta t/\eps} \, \left[\,\left\|\delta^{n}\right\|_{1} \,+\, C_1\, \Delta t \,\right]\,.  
$$
By induction, we easily get 
\begin{equation}
\label{peintre}
\left\|\delta^{n}\right\|_{1} \,\leq\,  e^{-\beta_0\,t^n/\eps} \,\left\|\delta^{0}\right\|_{1} \,\,+\, \, C_a\,\Delta t \,\frac{e^{-\beta_0\,\Delta t/\eps}}{1-e^{-\beta_0\,\Delta t/\eps}}. 
\end{equation}
To conclude we only observe that  ${x\,e^{-x}} \leq {1-e^{-x}}$,  for any $x\geq 0$, then it gives the second estimate of (\ref{devi:00}): there exists a constant $C>0$, only depending on $a$, $\beta_0$, $TV(u^0)$ and $TV(v^0)$ such that
$$ 
\left\|\delta^{n}\right\|_{1} \,\leq\, e^{-\beta_0\,t^n/\eps} \,\left\|\delta^{0}\right\|_{1} \,+\,  \, C\,\eps.
$$
Moreover, when $\eps< \Delta t$,  we again start from the estimate (\ref{peintre}) and note that   $1/(1-e^{-\beta_0\Delta t/\eps}) \leq 1/(1-e^{-\beta_0})$.
Thus, there exists another constant $C>0$, only depending on $a$, $\beta_0$, $TV(u^0)$ and $TV(v^0)$ such that
$$
\left\|\delta^{n}\right\|_{1} \,\leq\, e^{-\beta_0\,t^n/\eps} \,\left\|\delta^{0}\right\|_{1} \,+\,  C\,\Delta t \,e^{-\beta_0\Delta t/\eps},
$$
which gives (\ref{devi:01}).
\end{proof}

\subsection{\bf Proof of Theorem~\ref{theo:00}}

We are now ready to perform the asymptotic analysis of the numerical scheme (\ref{sch:00})-(\ref{sch:01}) when $\eps$ goes to zero. 

Let us consider the numerical solution $(u^\eps_h,v^\eps_h)$ to the scheme (\ref{sch:00})-(\ref{sch:01}) written in the form (\ref{sch:02})-(\ref{sch:03}) with 
$$
\left\{
\begin{array}{l}
w^\eps_h \,=\, -v^\eps_h \,-\, \sqrt{a} \, u^\eps_h \,,
\\ 
\,
\\
z^\eps_h \,=\, +v^\eps_h \,-\, \sqrt{a}\, u^\eps_h\,,
\end{array}\right.
$$
such that 
$$
\left\{
 \begin{array}{l}
\displaystyle w_h^\eps(t,x)=\sum_{n\in\N}\sum_{j\in\Z} w_j^{\eps,n}\,\mathbf{1}_{C_j} (x)\, \mathbf{1}_{[t^{n}, t^{n+1}[} (t), 
\\
\,
\\
\displaystyle z_h^\eps(t,x)=\sum_{n\in\N}\sum_{j\in\Z} z_j^{\eps,n}\,\mathbf{1}_{C_j} (x)\, \mathbf{1}_{[t^{n}, t^{n+1}[} (t).
\end{array}
\right.
$$
Let us also define $(w_h,z_h)$  the numerical solution to the scheme (\ref{sch:02})-(\ref{sch:03}) in the asymptotic limit $\eps=0$.  $w_h$ and $z_h$ are given by 
$$
\left\{
 \begin{array}{l}
\displaystyle w_h(t,x)=\sum_{n\in\N}\sum_{j\in\Z} w_j^{n}\,\mathbf{1}_{C_j} (x)\, \mathbf{1}_{[t^{n}, t^{n+1}[} (t), 
\\
\,
\\
\displaystyle z_h(t,x)=\sum_{n\in\N}\sum_{j\in\Z} z_j^{n}\,\mathbf{1}_{C_j} (x)\, \mathbf{1}_{[t^{n}, t^{n+1}[} (t),
\end{array}
\right.
$$
where $(w_j^n,z_j^n)$ is given by (\ref{copain:0}). To obtain an error estimates we rewrite the values $(w_j^n,z_j^n)_{(n,j)\in\N\times\Z}$ as a perturbation of the numerical solution to (\ref{sch:02})-(\ref{sch:03}) with a fixed value of $\eps>0$,  
$$
 \left\{ 
\begin{array}{l}
\displaystyle w_j^{n+1/2} \,=\, w_j^n \,- \sqrt{a}\, \frac{\Delta t}{\Delta x}\, \left(w_{j}^n - w_{j-1}^n\right), 
\\
\,
\\
\displaystyle z_j^{n+1/2} \,=\, z_j^n \,+ \sqrt{a}\, \frac{\Delta t}{\Delta x}\, \left(z_{j+1}^n - z_{j}^n\right),
\end{array}
\right. 
$$
and then 
\begin{equation}
\label{ref:031}
\left\{ 
\begin{array}{l}
w_j^{n+1} \;=\, \displaystyle{ w_j^{n+1/2} \,+\, G_{\eps, \Delta t}\left(w_j^{n+1/2}, z_j^{n+1/2}\right)  \,-\, \Delta t\,\Ecal_{j}^{n}(\eps),}
\\
\,
\\
z_j^{n+1} \,=\, \displaystyle{ z_j^{n+1/2} \,-\, G_{\eps, \Delta t}\left(w_j^{n+1/2}, z_j^{n+1/2}\right) \,+\, \Delta t\,\Ecal_{j}^{n}(\eps).}
\end{array}
\right. 
\end{equation}
where $\Delta t\,\Ecal_j^{n}(\eps)$ represents the consistency error of the operator $G_{\eps,\Delta t}$ with respect to $\eps$, that is,
$$
\Delta t\,\Ecal_{j}^{n}(\eps) := G_{\eps, \Delta t} \left(w_j^{n+1/2}, z_j^{n+1/2}\right)- G_{0, \Delta t} \left(w_j^{n+1/2}, z_j^{n+1/2}\right).
$$
Therefore, we apply Lemma~\ref{lemma:1} $(ii)$ and $(iii)$ in (\ref{ref:031}), with $(w_1,z_1)=(w_j^{\eps},z_j^{\eps})$ and $(w_2,z_2)=(w_j,z_j)$, it yields
\begin{eqnarray*}
|w_j^{\eps,n+1} - w_j^{n+1}| \,+\, |z_j^{\eps,n+1} - z_j^{n+1}|  &\leq& |w_j^{\eps,n+1/2} -w_j^{n+1/2}| \,+\, |z_j^{\eps,n+1/2} -z_j^{n+1/2}| 
\\
&+& 2\, \left|\Delta t\,\Ecal_{j}^{n}(\eps)\right|,
\end{eqnarray*}
and by linearity of the transport scheme (\ref{sch:02}), we have for all $n\geq 0$
$$
|w_j^{\eps,n+1} - w_j^{n+1}| \,+\, |z_j^{\eps,n+1} - z_j^{n+1}|  \,\leq\, |w_j^{\eps,n} -w_j^{n}| \,+\, |z_j^{\eps,n} -z_j^{n}| 
\,+\,2\,  \left|\Delta t\,\Ecal_{j}^{n}(\eps)\right|.
$$
Thus, multiplying by $\Delta x$, summing over $j\in\Z$ and applying a straightforward induction, we get the following stability result
\begin{eqnarray*}
\sum_{j\in\Z}\Delta x \left(|w_j^{\eps,n} - w_j^{n}| \,+\, |z_j^{\eps,n} - z_j^{n}|\right)  &\leq& \sum_{j\in\Z}\Delta x \left(|w_j^{\eps,0} -w_j^{0}| \,+\, |z_j^{\eps,0} -z_j^{0}|\right)
\\ 
&+& 2\,\sum_{k=0}^{n-1}\sum_{j\in\Z}\Delta t \,\Delta x\,\left|\Ecal_{j}^{k}(\eps)\right|.
\end{eqnarray*}
It now remains to evaluate the error $\Ecal^{n}_{j}(\eps)$. Using that for any $(u,v)\in I(N_0,a_0)$, the function $\Rcal\in\C^1(\R^+,\R)$ such that $\beta_0\leq\partial_v\Rcal(u,v)\leq \beta$, we have
\begin{eqnarray*}
\Delta t\,\left|\Ecal^{n}_{j}(\eps)\right| &=& e^{-{\beta \, \Delta t}/{\eps}}\,\left|-\left (v_j^{n+1/2} - A(u_j^{n+1/2})\right)\,\left(1+ \frac{\beta \, \Delta t}{\eps}\right) \,+\, \frac{\Delta t}{\eps}\Rcal \left(u_j^{n+1/2},v_j^{n+1/2}\right)\,\right|,
\\
&\leq & e^{-\beta_0\Delta t/\eps}\,\left|v_j^{n+1/2}- A(u_j^{n+1/2})\right|. 
\end{eqnarray*}
Thanks to the estimates (\ref{devi:00}) and (\ref{devi:01}) in Proposition~\ref{prop:03} on the deviation applied to  $v^{n+1/2}- A(u^{n+1/2})$ which is also valid in the asymptotic $\eps\rightarrow 0$, it yields 
$$
\|v^{n+1/2}- A(u^{n+1/2})\|_{1} \leq  
\left\{\begin{array}{ll}
\displaystyle \left\|\delta^0\right\|_{1}\,+\, C\,\Delta t & {\rm if }\,\, n =0,
\\
\,
\\
\displaystyle C\,\Delta t  & {\rm if }\,\, n > 0.
\end{array}\right.
$$
Then, we get for $k\geq 0$ and $\eps\leq \Delta t$,
$$
\sum_{j\in\Z}\Delta x \,\Delta t\,\left|\Ecal^{k}_{j}(\eps)\right| \,\,\leq\,\, \,e^{-\beta_0\Delta t/\eps}\, \left[ \left\|\delta^0\right\|_{1} \,+\, C\,\Delta t\right].
$$ 
Hence summing over $0\leq k \leq n$, it gives
$$
\sum_{k=0}^n\sum_{j\in\Z}\Delta x\,\Delta t\,\left|\Ecal^{k}_{\eps, \Delta t}\right| \,\,\leq\,\,  e^{-\beta_0\Delta t/\eps}\, \left[ \left\|\delta^0\right\|_{1} \,+\, C\,t^{n}\right].
$$ 
Finally, we get the estimate  
\begin{eqnarray*}
\| w^\eps_h(t^n) - w_h(t^n)\|_{1} + \| z^\eps_h(t^n) - z_h(t^n)\|_{1} &\leq&  \| w^\eps_h(0) - w_h(0)\|_{1} + \| z^\eps_h(0) - z_h(0)\|_{1} 
\\
&+&  2\,e^{-\beta_0\Delta t/\eps}\, \left[ {\left\|\delta^0\right\|_{1}} \,+\, C\,t^{n-1}\right]
\end{eqnarray*}
and the result follows  $(u_h^\eps,v_h^\eps)\rightarrow (u_h,v_h)$, when $\eps$ goes to zero. The proof of Theorem~\ref{theo:00} is now complete.

\section{\bf Proof of Theorem \ref{theo:01} }
\label{sec:convergence}
\setcounter{equation}{0}

In this section, we prove the convergence of the relaxation Asymptotic Preserving scheme.  As in the stability analysis of the relaxation scheme, we will rather consider the diagonal variables $w$ and $z$ and drop the subscripts $\eps$ for sake of clarity when it is not necessary.

\subsection{Consistency error}
Consider $(w,z)$ the exact solution to (\ref{our:sys1})-(\ref{our:sys2}) with (\ref{def:wz}). Unfortunately, this solution is not smooth enough to study the consistency error, then we introduce a regularization $(w_\delta,z_\delta)$  given by
$$
\left\{
\begin{array}{l}
w_\delta(t,x) = w\star \rho_\delta(t,x),
\\
\,
\\
z_\delta(t,x) = z\star \rho_\delta(t,x),
\end{array}\right.
$$
where $\star$ denotes the convolution product with respect to $x\in\R$ and 
$$
\rho_\delta(x)=\frac{1}{\delta}\rho\left(\frac{x}{\delta}\right)\quad {\rm and}\quad\rho\,\in\, \C^\infty_c(\R), \quad \rho \,\geq\, 0, \quad \int_{\R} \rho(z)\,\ud z \,=\, 1. 
$$
Thus, the couple $(w_\delta,z_\delta)$ is solution to 
\begin{equation}
\label{soldelta}
\left\{
\begin{array}{l}
\displaystyle\partial_t w_\delta \,+\, \sqrt{a}\,\partial_x w_\delta \,=\, +\frac{1}{\eps}\Rcal_\delta(u,v),
\\
\,
\\
\displaystyle\partial_t z_\delta \,-\, \sqrt{a}\,\partial_x z_\delta \,=\, -\frac{1}{\eps}\Rcal_\delta(u,v),  
\end{array}\right.
\end{equation}
with $\Rcal_\delta  = \Rcal\star \rho_\delta$ and $(u,v)$ solution to (\ref{our:sys1})-(\ref{our:sys2}). Therefore, applying the Duhamel's representation,  the solution can be written as
\begin{equation}\label{form_carac}
\left\{
\begin{array}{l}
\displaystyle w_\delta(t^{n+1},x) \,=\,w_\delta(t^n,x-\sqrt{a}\Delta t) \,+\,\frac{1}{\eps}\int_{0}^{\Delta t}\Rcal_\delta(u,v)(t^n+s,x-\sqrt{a}(\Delta t-s))\,\ud t,
\\
\,
\\
\displaystyle z_\delta(t^{n+1},x) \,=\,z_\delta(t^n,x+\sqrt{a}\Delta t) \,-\,\frac{1}{\eps}\int_{0}^{\Delta t}\Rcal_\delta(u,v)(t^n+s,x+\sqrt{a}(\Delta t-s))\,\ud t.
\end{array}\right.
\end{equation}
Then we set 
\begin{equation}
\label{wztilde}
\tilde w_j^n = \frac{1}{\Delta x} \,\int_{C_j} w_\delta(t^n,x)\,\ud x,\quad \tilde z_j^n = \frac{1}{\Delta x} \,\int_{C_j} z_\delta(t^n,x)\,\ud x.
\end{equation} 
Integrating (\ref{form_carac}) over $x\in  C_j$  and dividing by $\Delta x$, it yields
\begin{equation}
\label{consistance}
\left\{
\begin{array}{l}
\displaystyle\tilde w_j^{n+1}  =  \tilde w_j^{n+1/2}  \,+\, G_{\eps, \Delta t} \left(\tilde w_j^{n+1/2}, \tilde z_j^{n+1/2}\right) \, +\, \Delta t \,\Ecal_{1,j}^n \,+\, {\Delta t}\, \Ecal_{2,j}^n,
\\
\,
\\
\displaystyle\tilde z_j^{n+1}  =  \tilde z_j^{n+1/2}  \,-\, G_{\eps, \Delta t} \left(\tilde w_j^{n+1/2}, \tilde z_j^{n+1/2}\right) \, +\, \Delta t \,\Ecal_{3,j}^n \,+\, {\Delta t} \,\Ecal_{4,j}^n,
\end{array}\right.
\end{equation}
with
$$
\left\{
\begin{array}{l}
\displaystyle \tilde w_j^{n+1/2}  =  \tilde w_j^n \,- \sqrt{a}\, \frac{\Delta t}{\Delta x}\, \left(\tilde w_{j}^n - \tilde w_{j-1}^n\right), 
\\
\,
\\
\displaystyle \tilde z_j^{n+1/2}  =  \tilde z_j^n \,+ \sqrt{a}\, \frac{\Delta t}{\Delta x}\, \left(\tilde z_{j+1}^n - \tilde z_{j}^n\right). 
\end{array}\right.
$$ 
The consistency errors related to the transport operator $\Ecal_{1,j}^n$, $\Ecal_{3,j}^n$ are respectively defined by
$$
\displaystyle \Delta t\,\Ecal^n_{1,j} \,=\,\frac{\varepsilon^n_{1,j+1/2} - \varepsilon^n_{1,j-1/2}}{\Delta x},
\quad\displaystyle \Delta t\,\Ecal^n_{3,j} =\frac{\varepsilon^n_{3,j+1/2} - \varepsilon^n_{3,j-1/2}}{\Delta x},
$$
where $\varepsilon^n_{1,j+1/2}$ and $\varepsilon^n_{3,j+1/2}$ are the consistency errors of the numerical flux and are given by
\begin{equation}
\left\{
\begin{array}{l}
\displaystyle\varepsilon^n_{1,j+1/2} = -\int_{0}^{\sqrt{a}\Delta t} w_\delta(t^n, x_{j+1/2}-s)\ud s \,+\, \sqrt{a}\,\Delta t\,\tilde w^n_j,
\\
\,
\\
\displaystyle\varepsilon^n_{3,j+1/2} = +\int_{0}^{\sqrt{a}\Delta t} z_\delta(t^n, x_{j+1/2}+s)\ud s \,-\, \sqrt{a}\,\Delta t\,\tilde z^n_{j+1},
\end{array}\right.
\label{cop:03}
\end{equation}
whereas the consistency errors $\Delta t\,\Ecal^n_{2,j}$ and $\Delta t\,\Ecal^n_{4,j}$  correspond to the stiff source term and are given by
$$
\left\{
\begin{array}{l}
\displaystyle \Delta t\Ecal^n_{2,j} =  +\frac{1}{\Delta x}\int_{C_j} \int_{0}^{\Delta t}\frac{1}{\eps}\Rcal_\delta\left(u, v\right)(t^n+s,x-\sqrt{a}(\Delta t-s))\,\ud s - G_{\eps, \Delta t} \left(\tilde w_j^{n+1/2}, \tilde z_j^{n+1/2}\right)\ud x,
\\
\,
\\
\displaystyle \Delta t\,\Ecal^n_{4,j} =  -\frac{1}{\Delta x}\int_{C_j} \int_{0}^{\Delta t}\frac{1}{\eps}\Rcal_\delta\left(u, v\right)(t^n+s,x+\sqrt{a}(\Delta t-s))\ud s - G_{\eps, \Delta t} \left(\tilde w_j^{n+1/2}, \tilde z_j^{n+1/2}\right)\ud x.
\end{array}\right.
$$

We then  evaluate successively each consistency error term. On the one hand, we prove the following  consistency error for smooth solutions, which is related to the transport approximation.

\begin{proposition}
\label{prop:E1}
Let $(w,z)$ be given by (\ref{def:wz}), where $(u,v)$ is the exact solution to (\ref{our:sys1})-(\ref{our:sys2}) and such that  $w$, $z\in L^\infty(\R^+,BV(\R))$. Then the consistency error related to the transport part satisfies
$$
\sum_{j\in\Z} \Delta x \,\left[\, |\Ecal_{1,j}^n| \,+\,  |\Ecal_{3,j}^n|\,\right] \,\,\leq \,\, C\,\frac{\Delta x}{\delta}\,\left(\, TV(w(t^n))\,+\,TV(z(t^n)\,\right).
$$ 
\end{proposition}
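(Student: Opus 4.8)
The plan is to reduce the whole statement to a single summable second‑difference estimate on the flux errors, and then to transfer onto the mollifier the two derivatives that appear. First I would use the defining relation $\Delta t\,\Ecal_{1,j}^n = (\varepsilon^n_{1,j+1/2} - \varepsilon^n_{1,j-1/2})/\Delta x$ (and likewise for $\Ecal_{3,j}^n$) to write
$$
\sum_{j\in\Z}\Delta x\,|\Ecal_{1,j}^n| \,=\, \frac{1}{\Delta t}\sum_{j\in\Z}\left|\varepsilon^n_{1,j+1/2} - \varepsilon^n_{1,j-1/2}\right|,
$$
so that it suffices to bound $\sum_j|\varepsilon^n_{1,j+1/2} - \varepsilon^n_{1,j-1/2}|$ by $C\,\sqrt a\,\Delta t\,\Delta x\,\|\partial_{xx}w_\delta(t^n)\|_{L^1(\R)}$. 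I note first that the CFL condition (\ref{CFL}) gives $\sqrt a\,\Delta t < \Delta x$, so in (\ref{cop:03}) the window $[x_{j+1/2}-\sqrt a\,\Delta t,\,x_{j+1/2}]$ lies inside the single cell $\overline{C_j}$; hence $\varepsilon^n_{1,j+1/2}$ compares the exact upwind flux of $w_\delta(t^n,\cdot)$ through $x_{j+1/2}$ with the numerical flux $\sqrt a\,\Delta t\,\tilde w_j^n$, and depends on $w_\delta(t^n,\cdot)$ only through its values on $\overline{C_j}$.

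The heart of the argument is that $\varepsilon^n_{1,j+1/2}$, viewed as a linear functional of $w_\delta(t^n,\cdot)$, vanishes on affine functions: the upwind flux is exact on linear profiles. Substituting $y = x_{j+1/2}-s$ and inserting $w_\delta(y) = w_\delta(x_{j+1/2}) + \int_{x_{j+1/2}}^y \partial_x w_\delta$, I would produce the splitting
$$
\varepsilon^n_{1,j+1/2} \,=\, \frac{\sqrt a\,\Delta t\,(\sqrt a\,\Delta t - \Delta x)}{2}\,\partial_x w_\delta(t^n,x_{j+1/2}) \,+\, R_{j+1/2}^n,
$$
where the remainder $R_{j+1/2}^n$ is a functional of $\partial_{xx}w_\delta(t^n,\cdot)$ localised on $\overline{C_j}$. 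A double integration then gives $|R_{j+1/2}^n| \le C\,\sqrt a\,\Delta t\,\Delta x\int_{C_j}|\partial_{xx}w_\delta(t^n,y)|\,\ud y$. The affine‑exactness is exactly what forces the $O(\Delta x)$ leading coefficient to multiply a genuine nodal \emph{difference} once consecutive flux errors are subtracted.

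Next I would assemble the first difference, treating the two pieces differently. For the leading term I keep the difference structure,
$$
\sum_{j\in\Z}\left|\partial_x w_\delta(t^n,x_{j+1/2}) - \partial_x w_\delta(t^n,x_{j-1/2})\right| \,\le\, \|\partial_{xx}w_\delta(t^n)\|_{L^1(\R)},
$$
while for the remainder, since the $R_{j+1/2}^n$ are carried by disjoint cells, the crude bound $\sum_j|R_{j+1/2}^n - R_{j-1/2}^n| \le 2\sum_j|R_{j+1/2}^n| \le C\,\sqrt a\,\Delta t\,\Delta x\,\|\partial_{xx}w_\delta(t^n)\|_{L^1(\R)}$ is enough. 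Using $|\sqrt a\,\Delta t(\sqrt a\,\Delta t - \Delta x)| \le \sqrt a\,\Delta t\,\Delta x$ from (\ref{CFL}) I obtain $\sum_j|\varepsilon^n_{1,j+1/2} - \varepsilon^n_{1,j-1/2}| \le C\,\sqrt a\,\Delta t\,\Delta x\,\|\partial_{xx}w_\delta(t^n)\|_{L^1(\R)}$.

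Finally I transfer the two lost derivatives onto $\rho_\delta$: since $\partial_{xx}w_\delta = (\partial_x w)\star\rho_\delta'$ and $\partial_x w(t^n,\cdot)$ is a finite measure of total mass $TV(w(t^n))$, Young's inequality yields
$$
\|\partial_{xx}w_\delta(t^n)\|_{L^1(\R)} \,\le\, TV(w(t^n))\,\|\rho_\delta'\|_{L^1(\R)} \,=\, \frac{\|\rho'\|_{L^1(\R)}}{\delta}\,TV(w(t^n)),
$$
which is precisely where the factor $1/\delta$ enters. Dividing by $\Delta t$ gives $\sum_j\Delta x\,|\Ecal_{1,j}^n| \le C\,\frac{\Delta x}{\delta}\,TV(w(t^n))$, and the identical computation, upwinding in the opposite direction so that $\varepsilon^n_{3,j+1/2}$ lives on $\overline{C_{j+1}}$, produces the $\Ecal_{3,j}^n$ contribution with $TV(z(t^n))$; adding the two proves the claim. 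I expect the only delicate point to be the third step: one must retain the discrete‑difference structure for the $O(\Delta x)$ leading term, since bounding it termwise would yield $\sum_j|\partial_x w_\delta(x_{j+1/2})|\sim\|\partial_x w_\delta\|_{L^1(\R)}/\Delta x$ and ruin the estimate, and it is exactly the affine‑exactness of the upwind flux that makes the difference collapse onto $\|\partial_{xx}w_\delta(t^n)\|_{L^1(\R)}$.
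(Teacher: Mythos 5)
Your proof is correct and follows essentially the same route as the paper: both reduce $\sum_j|\varepsilon^n_{1,j+1/2}-\varepsilon^n_{1,j-1/2}|$ to $C\sqrt{a}\,\Delta t\,\Delta x\,\|\partial^2_{xx}w_\delta(t^n)\|_{1}$ by exploiting the first-difference structure of $\Ecal^n_{1,j}$, and then trade the extra derivative for $1/\delta$ via $\|\rho_\delta'\|_{1}=\|\rho'\|_{1}/\delta$ (the paper gets the intermediate bound by a change of variables turning $\varepsilon^n_{1,j+1/2}$ into a double integral of $\partial_x w_\delta$, whereas you use a Taylor expansion with an explicit affine defect, but the content is the same). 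One wording slip: $\varepsilon^n_{1,j+1/2}$ vanishes only on \emph{constants}, not on affine functions — on affine profiles it equals $\tfrac{1}{2}\sqrt{a}\,\Delta t\,(\sqrt{a}\,\Delta t-\Delta x)$ times the slope, which is nonzero but $j$-independent, and that is precisely what your displayed splitting and the subsequent telescoping of the leading term actually use.
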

\begin{proof}
We first study the consistency error for $w\in L^\infty(\R^+,BV(\R))$. We  perform a simple change of variable in (\ref{cop:03}), which yields  since  $\sqrt{a}\Delta t=\lambda\,\Delta x$,
\begin{eqnarray*}
\varepsilon^n_{1,j+1/2} &=&  -\lambda\int_{0}^{\Delta x} w_\delta(t^n,x_{j+1/2} - \lambda s)\ud s \,+\, \lambda\int_{0}^{\Delta x} w_\delta(t^n,x_{j+1/2}-s)\ud s,
\\
&=& \lambda \int_{0}^{\Delta x} \int_{\lambda s}^{s} {\partial_x w_\delta}(t^n,x_{j+1/2}-r)\ud r\,\ud s. 
\end{eqnarray*}
Therefore,  since $w_\delta$ is smooth we have
\begin{eqnarray*}
\left|\Ecal^n_{1,j}\right| &=& \frac{\sqrt{a}}{\Delta x^2}\,\left|\, \int_{0}^{\Delta x} \int_{\lambda s}^{s} \partial_x w_\delta(t^n,x_{j+1/2}-r) \,-\,\partial_x w_\delta(t^n,x_{j-1/2}-r)\ud r\,\ud s\,\right|, 
\\
&\leq&   \sqrt{a}\,\int_{x_{i-3/2}}^{x_{i+1/2}}   \left|\partial^2_{xx} w_\delta(t^n,x)\right|\,\ud x. 
\end{eqnarray*}
By multiplying by $\Delta x$ and summing over $j\in\Z$, we get an estimate  for a smooth solution $w_\delta(t^n)~\in~ W^{2,1}(\R)$,
$$
\sum_{j\in\Z}\Delta x \,\left|\Ecal^n_{1,j}\right| \,\,\leq \,\, 2\,\sqrt{a}\, \Delta x  \,\|\partial^2_{xx} w_\delta(t^n)\|_{1}.
$$
To achieve the proof, we need to estimate $\|\partial^2_{xx} w_\delta(t^n)\|_{1}$ with respect to $w$ and $\rho_\delta$. Using the convolution properties, we easily get 
$$
\|\partial^2_{xx} w_\delta(t^n)\|_{1} \,\leq \, \frac{C}{\delta}\,\|\partial_{x} w_\delta(t^n)\|_{1}\,\leq \, \frac{C}{\delta}\,TV(w(t^n)),
$$
which allows to conclude that
$$
\sum_{j\in\Z}\Delta x \,\left|\Ecal^n_{1,j}\right| \,\,\leq \,\, C\,\frac{\Delta x}{\delta}  \,TV(w(t^n)).
$$
Using a similar technique, we also get  for a smooth solution $z\in L^\infty(\R^+,BV(\R))$,
$$
\sum_{j\in\Z}\Delta x \,\left|\Ecal^n_{3,j}\right| \,\,\leq \,\, C\, \frac{\Delta x}{\delta} \,TV(z(t^n)).
$$
\end{proof}

On the other hand, we treat the consistency errors $\Ecal_{2,j}^n$ and $\Ecal_{4,j}^n$, which are related to the stiff source term.   

\begin{proposition}
\label{prop:E2}
Let $(w,z)$ be given by (\ref{def:wz}), where $(u,v)$ is the exact solution to (\ref{our:sys1})-(\ref{our:sys2}). Assume that  $w$, $z\in L^\infty(\R^+,BV(\R))$. Then there exists a constant $C>0$, only depending on $u$ and $v$ such that  the consistency error related to the stiff source part satisfies
$$
\sum_{j\in\Z}\Delta x\, \left|\Ecal^n_{2,j}\right| \,\,\leq\,\, C\,\left[\frac{\Delta t}{\eps}\,\left(\,e^{-\beta_0\,t^n/\eps} \,\frac{\left\|\delta^{0}\right\|_{1}}{\eps} \,+\,  1\,\right)\,+\,\frac{\Delta x}{\eps}  \,+\,\frac{\delta}{\eps} \right]   
$$
and
$$
\sum_{j\in\Z}\Delta x\, \left|\Ecal^n_{4,j}\right| \,\,\leq\,\, C\,\left[\frac{\Delta t}{\eps}\,\left(\,e^{-\beta_0\,t^n/\eps} \,\frac{\left\|\delta^{0}\right\|_{1}}{\eps} \,+\,  1\,\right)\,+\,\frac{\Delta x}{\eps}  \,+\,\frac{\delta}{\eps} \right].   
$$
\end{proposition}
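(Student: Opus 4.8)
The plan is to read $\Delta t\,\Ecal_{2,j}^n$ as the difference between the exact mollified source accumulated along the characteristic foot and the numerical increment $G_{\eps,\Delta t}$ evaluated at the transported half-step values, and to split this difference into the three contributions producing respectively the terms $\frac{\delta}{\eps}$, $\frac{\Delta x}{\eps}$ and $\frac{\Delta t}{\eps}\bigl(\ldots\bigr)$. First I would remove the convolution: replacing $\Rcal_\delta=\Rcal(u,v)\star\rho_\delta$ by the unmollified composite source $\Rcal(u,v)$ costs, via the standard mollification estimate $\|\Rcal(u,v)-\Rcal(u,v)\star\rho_\delta\|_{1}\le C\,\delta\,TV\bigl(\Rcal(u,v)\bigr)$ (finite since $\Rcal\in\C^1$ and $u,v\in BV$), an error carrying the prefactor $1/\eps$; as this enters $\Delta t\,\Ecal_{2,j}^n$ through a time integral of length $\Delta t$, dividing by $\Delta t$ and summing in $j$ leaves exactly the $\frac{\delta}{\eps}$ term.

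Next I would freeze the spatial dependence. Comparing the cell average over $C_j$ of the source integral to its value at the single point $(\tilde w_j^{n+1/2},\tilde z_j^{n+1/2})$ (converted back to $(u,v)$) yields a first-order spatial error controlled by $\Delta x$ times the spatial variation of the integrand; using the uniform $BV$ bound of Proposition~\ref{prop:02} for $w,z$ rather than their smoothness, this is $\Delta x\cdot C$, and after the $1/\eps$ scaling it is the $\frac{\Delta x}{\eps}$ term. What remains is a purely temporal quantity: the comparison of $\frac{1}{\eps}\int_0^{\Delta t}\Rcal\bigl(u^{n+1/2},v(s)\bigr)\,\ud s$, the source of the frozen relaxation ODE, against $G_{\eps,\Delta t}$.

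This last, core, step is where the exponential integrator earns its keep. The decisive algebraic fact is that $G_{\eps,\Delta t}$ reproduces \emph{exactly} the relaxation-ODE increment when $\Rcal$ is replaced by its penalization $\beta\,(v-A(u))$: a direct computation gives $G_{\eps,\Delta t}=\delta\bigl(1-e^{-\beta\Delta t/\eps}\bigr)$, which is the exact source of $\dt v=-\frac{\beta}{\eps}(v-A(u))$. Consequently, after a first-order Taylor expansion of $\Rcal$ about the local equilibrium — legitimate since $\Rcal(u,A(u))=0$ gives $\Rcal(u,v)=\dv\Rcal(u,\eta)\,\delta$ with $\delta=v-A(u)$ — the residual depends only on $\beta-\dv\Rcal\ge0$ and on $\delta$, and reduces in the scalar variable $s=\Delta t/\eps$ to prototypical expressions $1-(1+s)e^{-s}$ obeying $0\le 1-(1+s)e^{-s}\le\min(1,s^2/2)$. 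The upshot is a per-step residual of size $\lesssim\|\delta^{n+1/2}\|_{1}\,e^{-\beta_0\Delta t/\eps}\min\bigl(1,(\Delta t/\eps)^2\bigr)$; dividing by $\Delta t$ turns the quadratic factor into $\frac{\Delta t}{\eps^2}$, and inserting the deviation bound of Proposition~\ref{prop:03}, $\|\delta^{n+1/2}\|_{1}\le e^{-\beta_0 t^n/\eps}\|\delta^0\|_{1}+C\eps$, produces exactly $\frac{\Delta t}{\eps}\bigl(e^{-\beta_0 t^n/\eps}\frac{\|\delta^0\|_1}{\eps}+1\bigr)$, the transient coming from $e^{-\beta_0 t^n/\eps}\|\delta^0\|_1$ and the $+1$ from the relaxed size $C\eps$.

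The main obstacle is keeping this core estimate uniform as $\eps\to0$: a crude bound on the time integral loses a factor $1/\eps$, and only the interplay between the exponential-integrator construction of $G_{\eps,\Delta t}$ and Proposition~\ref{prop:03} rescues it. Concretely, every stray $1/\eps$ issuing from the fast relaxation must be absorbed either into the exponential weight $e^{-\beta\Delta t/\eps}$ (controlling the regime $\Delta t\gtrsim\eps$) or into the $O(\eps)$ size of the equilibrium deviation (controlling $\Delta t\lesssim\eps$), and the two regimes must be checked separately against the quadratic bound $\min(1,s^2/2)$; this bookkeeping is the delicate point. Finally, $\Ecal_{4,j}^n$ is obtained from $\Ecal_{2,j}^n$ by reversing the sign of $G_{\eps,\Delta t}$, the characteristic direction $x+\sqrt{a}(\Delta t-s)$ and the roles of $w$ and $z$, so the identical argument applied to the $z$-component gives the second inequality verbatim.
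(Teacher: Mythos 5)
Your decomposition follows the same three-way split as the paper (mollification error giving $\delta/\eps$, spatial freezing giving $\Delta x/\eps$, exponential-integrator error giving the $\Delta t/\eps$ term), and your treatment of the last piece — the exactness of $G_{\eps,\Delta t}$ for the penalized linear source, the Taylor expansion about $v=A(u)$, the elementary bound $0\le 1-(1+s)e^{-s}\le \min(1,s^2/2)$, and the insertion of the deviation estimate of Proposition~\ref{prop:03} — is precisely how the paper bounds its terms $\Ecal^n_{21,j}$ and $\Ecal^n_{22,j}$. However, there is one genuine gap: after removing the convolution and the spatial dependence, what remains is \emph{not} ``the source of the frozen relaxation ODE.'' It is
$$
\frac{1}{\eps}\int_0^{\Delta t}\Rcal(u,v)\bigl(t^n+s,\,x-\sqrt{a}(\Delta t-s)\bigr)\,\ud s,
$$
where $(u,v)$ is the exact PDE solution evaluated along the characteristic; within the time step this solution evolves under the full coupled dynamics (transport plus stiff relaxation), not under the relaxation ODE with $u$ frozen at $u^{n+1/2}$. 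Replacing the integrand by its value at $s=0$ (equivalently, reducing to your frozen-ODE picture) incurs an additional error — the paper's term $\Ecal^n_{23,j}$ — whose control requires differentiating $\Rcal(u,v)$ along the characteristic and hence bounding $\|\partial_t v\|_{1}$, which contains the stiff contribution $\frac{1}{\eps}\|\Rcal(u,v)\|_{1}\sim\frac{1}{\eps}\|v-A(u)\|_{1}$. This is where the \emph{continuous} a priori estimates (\ref{resu:01})--(\ref{resu:02}) of Theorem~\ref{theo_contin:1} enter, yielding a contribution $C\frac{\Delta t}{\eps}\bigl(TV(u)+TV(v)+\frac{e^{-\beta_0 t^n/\eps}}{\eps}\|\delta^0\|_{1}+1\bigr)$. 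Your proposal never invokes these continuous estimates and silently identifies the characteristic trace of the PDE solution with a frozen-$u$ ODE solution, which is unjustified as stated.

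The gap is repairable without changing your final bound, since this extra term has exactly the same form as your core estimate; but it is a distinct error source requiring a distinct tool (the $L^1$ control of $\partial_t v$ for the exact solution, via Theorem~\ref{theo_contin:1}), not a consequence of the exponential-integrator algebra or of the discrete deviation bound of Proposition~\ref{prop:03} that you rely on. You should also make explicit that the deviation bound you apply to $\|\delta^{n+1/2}\|_{1}$ concerns the cell averages $(\tilde u_j^{n+1/2},\tilde v_j^{n+1/2})$ of the mollified exact solution rather than the numerical solution for which Proposition~\ref{prop:03} is stated; the paper glosses this too, but it deserves a remark.
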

\begin{proof}
We  first define $(\tilde u_j^n,\tilde v_j^n)$ such that 
$$
\tilde u_j^n = -\frac{\tilde w_j^n+\tilde z_j^n}{2\sqrt a}\quad {\rm and }\quad \tilde v_j^n = \frac{\tilde z_j^n-\tilde w_j^n}{2}.
$$ 
Therefore, we split the consistency error $\Ecal^n_{2,j}$  as
$$
\Ecal^n_{2,j} \,\,=\,\,  \Ecal^n_{21,j}\,+\,  \Ecal^n_{22,j}\,+\, \Ecal^n_{23,j} \,+\, \Ecal^n_{24,j}\,+\, \Ecal^n_{25,j},
$$
with 
$$
\left\{
\begin{array}{l}
\displaystyle \Delta t\Ecal^n_{21,j} = -\left[1 - \left(1+ \frac{\beta \, \Delta t}{\eps}\right)\,e^{-{\beta \, \Delta t}/{\eps}} \right] \,\left(\tilde v_j^{n+1/2} - A\left(\tilde u_j^{n+1/2}\right)\right),
\\
\;
\\
\displaystyle \Delta t\Ecal^n_{22,j} = \left(1-e^{-{\beta \, \Delta t}/{\eps}}\right) \,  \frac{\Delta t}{\eps} \,\Rcal(\tilde u_j^{n+1/2},\tilde v_j^{n+1/2}), 
\\
\,
\\
\displaystyle \Delta t\Ecal^n_{23,j} = \frac{1}{\eps\Delta x}\int_{C_j}\int_{0}^{\Delta t}\Rcal_\delta(u,v)(t^n+s,x-\sqrt{a}(\Delta t-s)) - \Rcal_\delta(u,v)(t^n,x-\sqrt{a}(\Delta t)) \ud s\ud x,
\\
\,
\\
\displaystyle \Delta t\Ecal^n_{24,j} = \frac{\Delta t}{\eps\,\Delta x}\int_{C_j}\Rcal_\delta(u,v)(t^n,x-\sqrt{a}(\Delta t)) - \Rcal(u,v)(t^n,x-\sqrt{a}(\Delta t))\ud x, 
\\
\,
\\
\displaystyle \Delta t\Ecal^n_{25,j} = \frac{\Delta t}{\eps\,\Delta x} \int_{C_j}\Rcal(u,v)(t^n,x-\sqrt{a}(\Delta t))- \Rcal\left(\tilde u^{n+1/2}_j,\tilde v_j^{n+1/2}\right)\,\ud x. 
\end{array}
\right.
$$
On the one hand, the two terms $ \Ecal^n_{21,j}$ and  $\Ecal^n_{22,j}$ can be easily evaluated using a Taylor expansion of $s\mapsto e^{-\beta s/\eps}$, it yields
$$
\Delta t\,|\Ecal^n_{21,j}| \,\leq\, \frac{1}{2}\left(\frac{\beta\Delta t}{\eps}\right)^2 \left|\tilde v_j^{n+1/2} - A\left(\tilde u_j^{n+1/2}\right) \right|. 
$$ 
Using that $\Rcal(u,A(u))=0$ and $\Rcal\in\C^1(\R^2,\R)$ with $\partial_v \Rcal(u,v)\leq \beta$, we also obtain that 
$$
\Delta t\,|\Ecal^n_{22,j}| \,\leq\, \left(\frac{\beta\Delta t}{\eps}\right)^2\, \left|\tilde v_j^{n+1/2} - A\left(\tilde u_j^{n+1/2}\right) \right|. 
$$ 
Therefore, from (\ref{devi:00}) in Proposition~\ref{prop:03}, we have
\begin{equation}
\label{res:e21}
\sum_{j\in\Z} \Delta x\,\left[\, |\Ecal^n_{21,j}| \,+\, |\Ecal^n_{22,j}| \,\right] \,\,\leq\,\, \,C\,\frac{\Delta t}{\eps} \, \left(\,e^{-\beta_0\,t^n/\eps} \,\frac{\left\|\delta^{0}\right\|_{1}}{\eps} \,+\,  1\,\right).
\end{equation}
On the other hand, we proceed to the evaluation of the terms $\Ecal^n_{23,j}$, $\Ecal^n_{24,j}$ and $\Ecal^n_{25,j}$. 
First, for $s\in [0,\Delta t]$, we set 
$$
\varphi_{\delta,x}(s) = \left[\Rcal(u,v)\star\rho_\delta\right](t^n+s,x-\sqrt{a}(\Delta t-s)).
$$
Then, from (\ref{hyp:01}) and (\ref{hyp:03}), we know that  $|\partial_u \Rcal(u,v)|\leq \sqrt{a}\,\beta$ and $|\partial_v \Rcal(u,v)|\leq \beta$, for any $(u,v)\in~ I(N_0,a_0)$, we obtain
\begin{eqnarray*}
\sum_{j\in\Z}\Delta x\Delta t\,\left|\Ecal^n_{23,j}\right| &\leq& \frac{1}{\eps}\int_{\R}\left|\int_{0}^{\Delta t}\int_0^{s} \varphi^\prime_{\delta,x} (\eta) \,\ud \eta \,\ud s\,\right|\ud x,
\\
&\leq& C\,\frac{\Delta t}{\eps}\,\int_{\R}\int_{t^n}^{t^{n+1}} \left(\left| \partial_t u_\delta\right| \,+\,\left|\partial_xu_\delta \right|\right)(t,x) \,\ud t \,\ud x
\\
&+& C\,\frac{\Delta t}{\eps}\,\int_{\R}\int_{t^n}^{t^{n+1}} \left(\left|\partial_t v_\delta\right|\,+\,\left|\partial_x v_\delta\right|\right)(t,x) \,\ud t \,\ud x.
\end{eqnarray*}
Thus we can use the estimates on the continuous relaxation system listed in Theorem \ref{theo_contin:1}. Indeed, since
\begin{equation*}
\left\{
\begin{array}{l}
\displaystyle\dt u_\delta \,=\,- \dx v_\delta,
\\
\,
\\
\displaystyle\dt v_\delta \, = \,-\, a\,\dx u_\delta  -\frac{1}{\eps}\Rcal_\delta(u,v),  
\end{array}\right.
\end{equation*}
we obtain, by applying a first order Taylor expansion of $\Rcal$, the inequalities
\begin{eqnarray*}
\int_{\R}\,\left(\left| \partial_t u_\delta\right| \,+\,\left|\partial_xu_\delta \right|\right)(t,x)\, \ud x &\leq& TV(u(t))\,+\, TV(v(t)), 
\\
\int_{\R}\, \left(\left|\partial_t v_\delta\right|\,+\,\left|\partial_x v_\delta\right|\right)(t,x) \,\ud x &\leq& C\, \left(TV(u(t)) + \frac{1}{\eps}\, \left\|(v-A(u))(t)\right\|_{1}\right).
\end{eqnarray*}
Hence, integrating over $t \in (t^n, t^{n+1})$ and using (\ref{resu:01}) and (\ref{resu:02}), we get:

\begin{equation}
\label{res:e23}
\sum_{j\in\Z}\Delta x\,\left|\Ecal^n_{23,j}\right| \,\,\leq\,\, C\,\frac{\Delta t}{\eps}\, \left(\,TV(u(t^n))\,+\, TV(v(t^n)) \,+\, \frac{e^{-\beta_0 t^n/\eps}}{\eps}\,\|{\delta^0}\|_{1} \,+\, 1\,\right),
\end{equation}
where $C>0$ only depends on $\sqrt{a}$ and $\beta$.

Now we treat the term $\Ecal^n_{24,j}$ using the smoothness properties  (\ref{hyp:01}) and (\ref{hyp:03}) on $\Rcal$, it gives 
\begin{eqnarray*}
\sum_{j\in\Z}\Delta x\,|\Ecal^n_{24,j}| &=& \frac{1}{\eps}\int_{\R}\left|\int_{\R}\left[\Rcal(u,v)(t^n,x-y-\sqrt{a} \Delta t) - \Rcal(u,v)(t^n,x-\sqrt{a} \Delta t)\right]\,\rho_\delta(y)\,\ud y\right|\,\ud x, 
\\
&\leq & \frac{C}{\eps} \int_{\R^2}\left[\left|u(t^n,x)-u(t^n,x-y)\right| \,+\,\left|v(t^n,x)-v(t^n,x-y)\right|\,\right]\rho_\delta(y)\,\ud y \,\ud x. 
\end{eqnarray*}
Thus, applying Fubini's theorem the $BV$ estimate on the exact solution (\ref{resu:01}) and the value of the integral of $\rho _{\delta}$,  we get 
\begin{equation}
\label{res:e24}
\sum_{j\in\Z}\Delta x\,\left|\Ecal^n_{24,j}\right| \,\,\leq\,\, C\,\frac{\delta}{\eps}\left[\, TV(u(t^n)) \,+\, TV(v(t^n)) \,\right].
\end{equation}
Finally, to deal with the last term $\Ecal_{25,j}^n$, we split it in two parts
\begin{eqnarray*}
\sum_{j\in\Z}\Delta x\,|\Ecal^n_{25,j}| &\leq& \frac{1}{\eps} \int_{\R}\left|\Rcal\left(u,v\right)(t^n,x-\sqrt{a}\Delta t) - \Rcal\left(u_\delta,v_\delta\right)(t^n,x-\sqrt{a}\Delta t)\right|\,\ud x
\\
&+& \frac{1}{\eps} \sum_{j\in\Z}\int_{C_j}\left|\Rcal\left(u_\delta,v_\delta\right)(t^n,x-\sqrt{a}\Delta t) - \Rcal\left(\tilde u^{n+1/2}_j,\tilde v_j^{n+1/2}\right)\right|\,\ud x
\end{eqnarray*}
and treat the different terms as for $\Ecal_{24,j}^n$, we get for the first one
$$
\int_{\R}\left|\Rcal\left(u,v\right)(t^n,x) - \Rcal\left(u_\delta,v_\delta\right)(t^n,x)\right|\,\ud x\,\leq\, C\,\delta\,\left[\, TV(u(t^n)) \,+\, TV(v(t^n)) \,\right]. 
$$
and for the latter one using the $BV$ estimate on the exact solution (\ref{resu:01}),
\begin{eqnarray*}
\sum_{j\in\Z}\int_{C_j}\left|\Rcal\left(u_\delta,v_\delta\right)(t^n,x-\sqrt{a}\Delta t) - \Rcal\left(\tilde u^{n+1/2}_j,\tilde v_j^{n+1/2}\right)\right|\,\ud x
\\
\,\leq\,  C\,\Delta x\,\left[\,\|\partial_x u_\delta(t^n)\|_{1} \,+\, \|\partial_x v_\delta(t^n)\|_{1},\right].
\end{eqnarray*}
Thus, we have
\begin{equation}
\label{res:e25}
\sum_{j\in\Z}\Delta x\,\left|\Ecal^n_{25,j}\right| \,\leq\, C\left(\frac{\delta}{\eps}+\frac{\Delta x}{\eps}\right)\,\left[\, TV(u(t^n)) \,+\, TV(v(t^n)) \, \right].
\end{equation}
Gathering (\ref{res:e21}), (\ref{res:e23}) , (\ref{res:e24}) and (\ref{res:e25}), and finally using the uniform bound on the BV norms of $(u,v)$ given in (\ref{resu:01}),  it yields
$$
\sum_{j\in\Z}\Delta x\, \left|\Ecal^n_{2,j}\right| \,\,\leq\,\, C\,\left[\frac{\Delta t}{\eps}\,\left(\,e^{-\beta_0\,t^n/\eps} \,\frac{\left\|\delta^{0}\right\|_{1}}{\eps} \,+\,  1\,\right)\,+\,\frac{\Delta x}{\eps} \,+\,\frac{\delta}{\eps} \right].   
$$
Using the same arguments we also prove that 
$$
\sum_{j\in\Z}\Delta x\, \left|\Ecal^n_{4,j}\right| \,\,\leq\,\, C\,\left[\frac{\Delta t}{\eps}\,\left(\,e^{-\beta_0\,t^n/\eps} \,\frac{\left\|\delta^{0}\right\|_{1}}{\eps} \,+\,  1\,\right)\,+\,\frac{\Delta x}{\eps} \,+\,\frac{\delta}{\eps} \right].   
$$
\end{proof}
\subsection{Convergence proof.}
Now we perform a rigorous analysis of the numerical scheme (\ref{sch:00})-(\ref{sch:01}) when $h=(\Delta t,\Delta x)$ goes to zero.  We consider the numerical solution $(u^\eps_h,v^\eps_h)$ to the scheme (\ref{sch:00})-(\ref{sch:01}) and $(u^\eps,v^\eps)$ the exact solution to (\ref{our:sys1})-(\ref{our:sys2}) and define $(w^\eps,z^\eps)$ using  (\ref{def:wz}). Then we denote by
$$
\bar w_j^{n} = \frac{1}{\Delta x} \,\int_{C_j} w^\eps(t^n,x)\,\ud x,\quad \bar z_j^n = \frac{1}{\Delta x} \,\int_{C_j} z^\eps(t^n,x)\,\ud x
$$
and $(w_j^n,z_j^n)_{(j,n)\in\Z\times\N}$ the numerical solution given by (\ref{sch:02})-(\ref{sch:03}). 
Thus,
\begin{eqnarray*}
\sum_{j\in\Z} \Delta x \left[\,|w_j^n-\bar w_j^n| \,+\, |z_j^n-\bar z_j^n|\;\right] &\leq&  \sum_{j\in\Z} \Delta x \left[\,|w_j^n-\tilde w_j^n| \,+\, |z_j^n-\tilde z_j^n|\;\right] 
\\
&+& \sum_{j\in\Z} \Delta x \left[\,|\tilde w_j^n-\bar w_j^n| \,+\, |\tilde z_j^n-\bar z_j^n|\;\right],
\end{eqnarray*}
where $(\tilde w_j^n,\tilde z_j^n)_{(j,n)\in\Z\times \N}$ is given by (\ref{wztilde}). On the one hand, we estimate the second terms of the right hand side using the convolution properties and have
\begin{equation}
\label{vitaly:00}
\sum_{j\in\Z} \Delta x \left[\,|\tilde w_j^n-\bar w_j^n| \,+\, |\tilde z_j^n-\bar z_j^n|\;\right]\,\leq\, C\,\delta\, \left[\,TV(u)\,+\,TV(v)\,\right].
\end{equation}
On the other hand, we apply the consistency error analysis to estimate the first term of the right hand side.  Applying (\ref{quasi:01})-(\ref{quasi:02}) established in Lemma~\ref{lemma:1} with $(\tilde w_j,\tilde z_j)$ and $(w_{j},z_j)$, it yields  
\begin{eqnarray*}
\sum_{j\in\Z}\Delta x\,|\tilde w_j^{n+1} - w_j^{n+1}|  &\leq&  \sum_{j\in\Z}\Delta x\,|\tilde w_j^{n+1/2}- w_j^{n+1/2}| \,\left( 1+\partial_wG_{\eps,\Delta t}(w_j,z_j^{n+1/2})\right)
\\
&+& \sum_{j\in\Z}\Delta x |\tilde z_j^{n+1/2} -z_j^{n+1/2}|\, \partial_zG_{\eps,\Delta t}(\tilde w_j^{n+1/2},z_j)
\\
&+&  \sum_{j\in\Z} \Delta x\,\Delta t\left[\, |\Ecal_{1,j}^n| \,+\, |\Ecal_{2,j}^n|   \,\right]
\end{eqnarray*}
and
\begin{eqnarray*}
\sum_{j\in\Z}\Delta x\,|\tilde z_j^{n+1} - z_j^{n+1}|  &\leq&  \sum_{j\in\Z}\Delta x\,|\tilde z_j^{n+1/2}- z_j^{n+1/2}| \,\left( 1-\partial_zG_{\eps,\Delta t}(\tilde w_j^{n+1/2},z_j)\right)
\\
&-& \sum_{j\in\Z}\Delta x |\tilde w_j^{n+1/2} -w_j^{n+1/2}|\, \partial_wG_{\eps,\Delta t}(w_j,z_j^{n+1/2})
\\
&+&  \sum_{j\in\Z} \Delta x\,\Delta t\left[\, |\Ecal_{3,j}^n| \,+\, |\Ecal_{4,j}^n|   \,\right].
\end{eqnarray*}
Summing the two inequalities and using that the scheme (\ref{sch:03}) is TVD, we get the following inequality
\begin{eqnarray*}
\sum_{j\in\Z}\Delta x\,\left[\, |\tilde z_j^{n+1} - z_j^{n+1}| \,+\, |\tilde w_j^{n+1} - w_j^{n+1}|\,\right]  &\leq&  \sum_{j\in\Z}\Delta x\,\left[\, |\tilde z_j^{n} - z_j^{n}| \,+\, |\tilde w_j^{n} - w_j^{n}|\,\right] 
\\
&+&  \sum_{j\in\Z} \Delta x\,\Delta t\left[\, |\Ecal_{1,j}^n| \,+\, |\Ecal_{2,j}^n|\,+\;|\Ecal_{3,j}^n| \,+\, |\Ecal_{4,j}^n|   \,\right].
\end{eqnarray*}
Therefore,
\begin{eqnarray*}
\sum_{j\in\Z}\Delta x\,\left[\, |\tilde z_j^{n+1} - z_j^{n+1}| \,+\, |\tilde w_j^{n+1} - w_j^{n+1}|\,\right]  \,\leq\,  \sum_{j\in\Z}\Delta x\,\left[\, |\tilde z_j^{0} - z_j^{0}| \,+\, |\tilde w_j^{0} - w_j^{0}|\,\right] 
\\
+\,  \sum_{k=0}^{n}\sum_{j\in\Z} \Delta x\,\Delta t\left[\, |\Ecal_{1,j}^k| \,+\, |\Ecal_{2,j}^k|\,+\;|\Ecal_{3,j}^k| \,+\, |\Ecal_{4,j}^k|   \,\right].
\end{eqnarray*}
Finally the consistency error analysis performed in Propositions~\ref{prop:E1} and \ref{prop:E2}, we have taking $\delta=\Delta x^{1/2}$
\begin{eqnarray}
\nonumber
\sum_{j\in\Z}\Delta x\,\left[\, |\tilde z_j^{n+1} - z_j^{n+1}| \,+\, |\tilde w_j^{n+1} - w_j^{n+1}|\,\right]  \leq  \sum_{j\in\Z}\Delta x\,\left[\, |\tilde z_j^{0} - z_j^{0}| \,+\, |\tilde w_j^{0} - w_j^{0}|\,\right] 
\\
+ \frac{C}{\eps}\,\left(\Delta t\,(\Delta t+\eps)\,\left(\frac{\left\|\delta^{0}\right\|_{1}}{\eps} \,+\,  1\,\right)   \,+\, t^n\,\left[\Delta x\,+\, \eps\Delta x^{1/2}  \,+\,\Delta x^{1/2} \right] \right).  
\label{vitaly:01}
\end{eqnarray}
Gathering (\ref{vitaly:00}) and (\ref{vitaly:01}), the right hand side converges to zero when $h=(\Delta t,\Delta x)$  goes to zero, which proves the convergence of the numerical solution (\ref{sch:00})-(\ref{sch:01}) to the exact solution to (\ref{our:sys1})-(\ref{our:sys2}).

Therefore, from the smoothness of the exact solution and the initial data $(u_0,v_0)$, it proves that for any discretization parameter $h$, for all $\eps > 0$:  
$$
\int_{\R}|u_h^\eps(t,x) - u^\eps(t,x)| \,+\,|v_h^\eps(t,x) - v^\eps(t,x)|\;\ud x \leq \frac{C}{\eps} \left(\Delta t\,\left(\frac{\left\|\delta^{0}\right\|_{1}}{\eps} \,+\,  1\,\right) \,+\,\Delta x^{1/2} \right).   
$$

\section{\bf Numerical simulations}
\label{sec:simul}
\setcounter{equation}{0}

\subsection{Generalized Jin\& Xin model}
We consider as a first numerical test the Jin \& Xin model with a nonlinear source term
\begin{equation}
\label{jinxin}
\left\{ 
\begin{array}{l}
\displaystyle{\dt u + \dx v = 0 \,,}
\\
\,
\\
\displaystyle{\dt v + a\, \dx u = -\frac{1}{\varepsilon} \,\mathcal{R}(u,v),}
\end{array}
\right.
\end{equation}
with $\Rcal(u,v)$ given by
$$
\Rcal(u,v) = \frac{v-u^2}{u^2+v^2}.
$$

We compute an approximation of the error for different meshes in space and time in order to evaluate the order of accuracy of the numerical scheme and different regimes corresponding to small and large values of the relaxation parameter $\eps>0$.  Therefore we choose two different initial data : the first one is a smooth initial datum given by  
\begin{equation}
\label{am:01} 
u_0(x)\,=\,\sin(2\pi\,x), \quad v_0(x)=0,\quad \forall x\in(0,1)
\end{equation}
whereas the second one is discontinuous
\begin{equation}
\label{am:02} 
u_0(x)\,=\,\, \left\{
\begin{array}{ll}
0.5  & \textrm{ if } x \leq 0,
\\
0.125  & \textrm{ if } x > 0,
\end{array}\right.
\end{equation}
and $v_0=0$.

We define an estimate of the numerical error  by  
$$
\mathcal{E}_1(h) \,=\, \|u_h^{\eps} - u_{2h}^{\eps}\|_1 \,+\, \| v_h^{\eps} - v_{2h}^{\eps}\|_1, 
$$ 
where $h=(\Delta x,\Delta t)$ is the discretization parameter and $\|\cdot\|_1$  is the discrete $L^1$ norm. In Figure~\ref{fig0a}, we present the curves corresponding to the order of accuracy with respect to $h$, computed from $\mathcal{E}_1(h)$ and observe that the order is relatively closed to one when the solution is smooth, whereas it decreases to $1/2$ when the solution is discontinuous which occurs when $\eps$ goes to zero or when the initial datum is discontinuous. Furthermore, as it has been proved in this paper, the numerical error is not too much affected by the variations the relaxation parameter $\eps$.

 \begin{center}
\begin{figure}[ht]
   \begin{tabular}{cc}
  \includegraphics[width =7.cm]{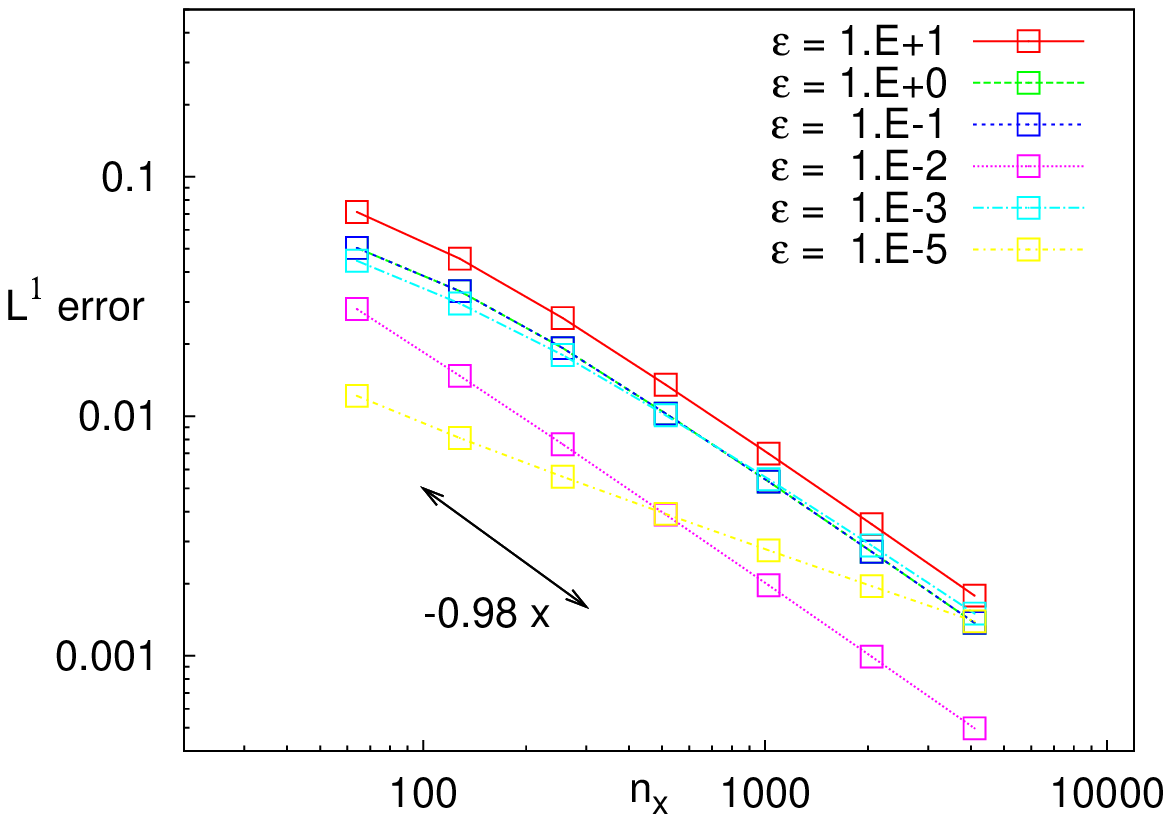}
  &
  \includegraphics[width =7.cm]{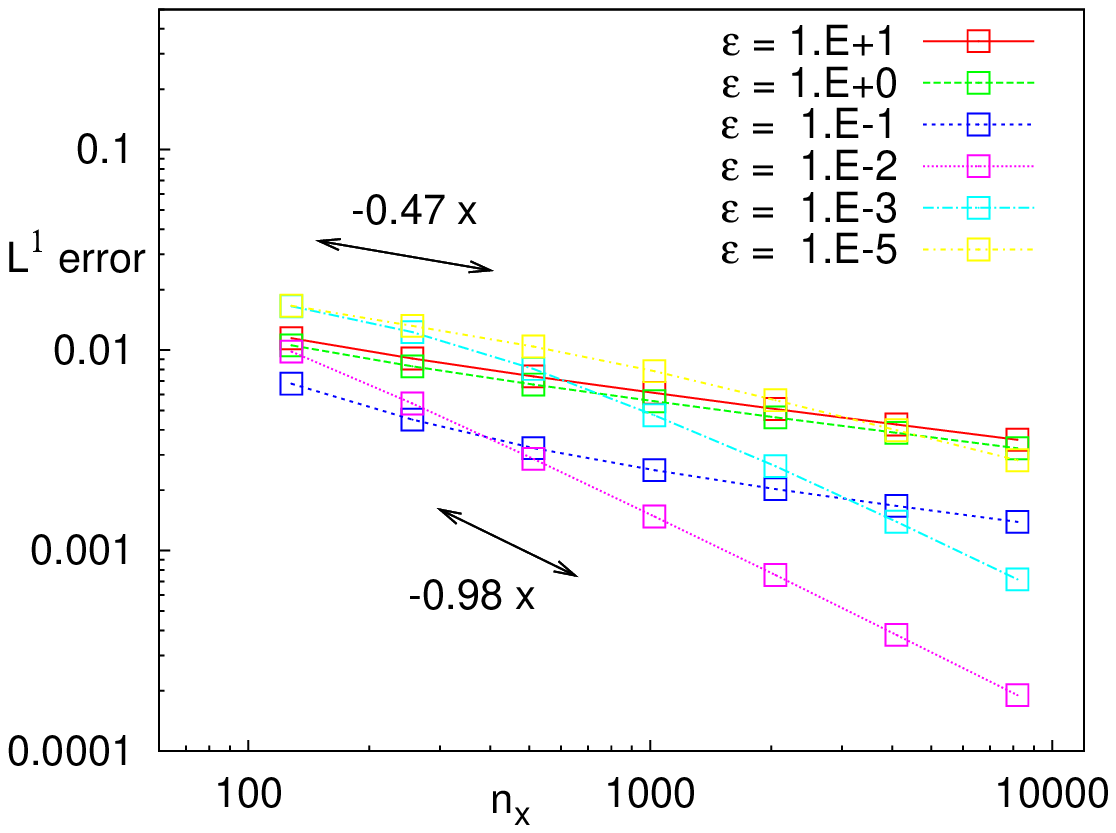}   
\\
$(a)$ & $(b)$
\end{tabular}
  \caption{Test 1 :  $L^1$ error {\it (a) smooth initial data (\ref{am:01}) and (b) discontinuous initial data (\ref{am:02})  for different regimes with initial data (\ref{ini2}).}}
  \label{fig0a}
 \end{figure}
 \end{center}

\subsection{The Broadwell model}
We now apply our scheme to the Broadwell model, which can be seen as a simple one-dimensional lattice Boltzmann equation, with only three discrete velocities \cite{broadwell, levB}.  The gas is defined by a density function in phase space satisfying the equation
\begin{equation}
\label{broadwell}
\left\{
\begin{array}{l}
\displaystyle \dt f_{+} + \dx f_{+} \,=\, \frac{1}{\eps}\, \left(f_0^2 \,-\,f_{+}\, f_{-}\right), 
\\
\,
\\
\displaystyle \dt f_0 \,= \, \frac{1}{2 \, \eps}\, \left(f_{+}\, f_{-} \,-\, f_0^2\right), 
\\
\,
\\
\displaystyle \dt f_{-} - \dx f_{-}\,=\, \frac{1}{\eps}\, \left(f_0^2\,-\,f_{+}\, f_{-}\right).
\end{array}
 \right.
\end{equation}
Here $f_+$, $f_-$ and $f_0$ denote the particle density distribution at time $t$, position $x$ with velocity $1$, $-1$ and $0$ respectively; whereas $\eps$ is the mean free path. 

We can rewrite the system with fluid dynamical variables. First, we define the density $\rho$, the momentum $m$ and $z$ as
\begin{equation}
\label{broad_kin}
\left\{
\begin{array}{l}
 \rho \,=\, f_+ \,+\, 2\, f_0 \,+\, f_-\,,
\\
m \,=\, f_+ \,-\, f_-\,,
\\
z \,=\, f_+ \,+\, f_-\,.
\end{array}
\right.
\end{equation}
 and the system (\ref{broadwell})  reads as
\begin{equation}
\label{broad1}
\left\{
\begin{array}{l}
\displaystyle \dt \rho + \dx m  \,=\, 0, 
\\
\,
\\
\displaystyle  \dt m + \dx z \,=\, 0\,, 
\\
\,
\\
\displaystyle \dt z + \dx m \,=\,  - \frac{1}{\eps}\,\left( \rho \, z - \frac{1}{2}\, \left(\rho^2 + m^2\right)\right)\,.
\end{array}
 \right.
\end{equation}
The local equilibrium corresponding to (\ref{broad1}) is then defined by $z = A\left(\rho,m\right)$, where 
$$
A\left(\rho,m\right)  = \frac{1}{2}\, \left(\rho + \frac{m^2}{\rho}\right). 
$$
Hence, when $\eps$ goes to zero, we obtain the following system of equations
\begin{equation}
\label{euler}
\left\{
\begin{array}{l}
\displaystyle\dt \rho + \dx m \,=\;0,
\\
\,
\\
\displaystyle\dt m + \dx A\left(\rho,m\right) \,=\, 0.
\end{array}\right.
\end{equation}
We still perform several numerical simulations on this model in order to verify the order of accuracy and the asymptotic behavior of the numerical solution for different values of $\eps>0$.

\paragraph{\it Test 1 : Approximation of smooth solutions.} For this numerical test, we consider a smooth initial data \cite{Par-Ru}: 
\begin{equation}\label{ini2}
 \left\{\begin{array}{l}
\rho_0(x) = \rho_g + 0.2\, \sin(\pi \, x),
\\
\,
\\
  m_0(x) = m_g,
\\
\,
\\
\displaystyle z_0(x) = \frac{1}{2}\, \left(\rho_0 + \frac{m_0^2}{\rho_0}\right),
 \end{array}\right.
\end{equation}
which is initially at the local equilibrium  $z_0=A\left(\rho_0,m_0\right)$. We apply periodic boundary condition in space and a CFL number $\lambda = 0.9$.

On the one hand, we compute as in the previous case an estimate of the numerical error for different meshes in space and time in order to evaluate the order of accuracy of the  scheme with respect to $h$ and for different regimes. In Figures \ref{fig3a}, we present the plots in log scale of the error estimate given by
$$
\mathcal{E}_p \,=\, \|\rho_h^{\eps} - \rho_{2h}^{\eps}\|_p \,+\, \| m_h^{\eps} - m_{2h}^{\eps}\|_p \,+\, \| z_h^{\eps} - z_{2h}^{\eps}\|_p, 
$$ 
where $h=(\Delta x,\Delta t)$ is the discretization parameter and $\|\cdot\|_p$  is the discrete $L^p$ norm with $p\in\{1,2,\infty\}$. We observe that in this case the order of accuracy of our method is relatively closed to one both for $L^1$ and $L^\infty$ norms and the numerical error is not affected by the large variations of the relaxation parameter $\eps$.

 \begin{figure}[ht]
  \begin{center}
  \begin{tabular}{cc}
  \includegraphics[width =7.cm]{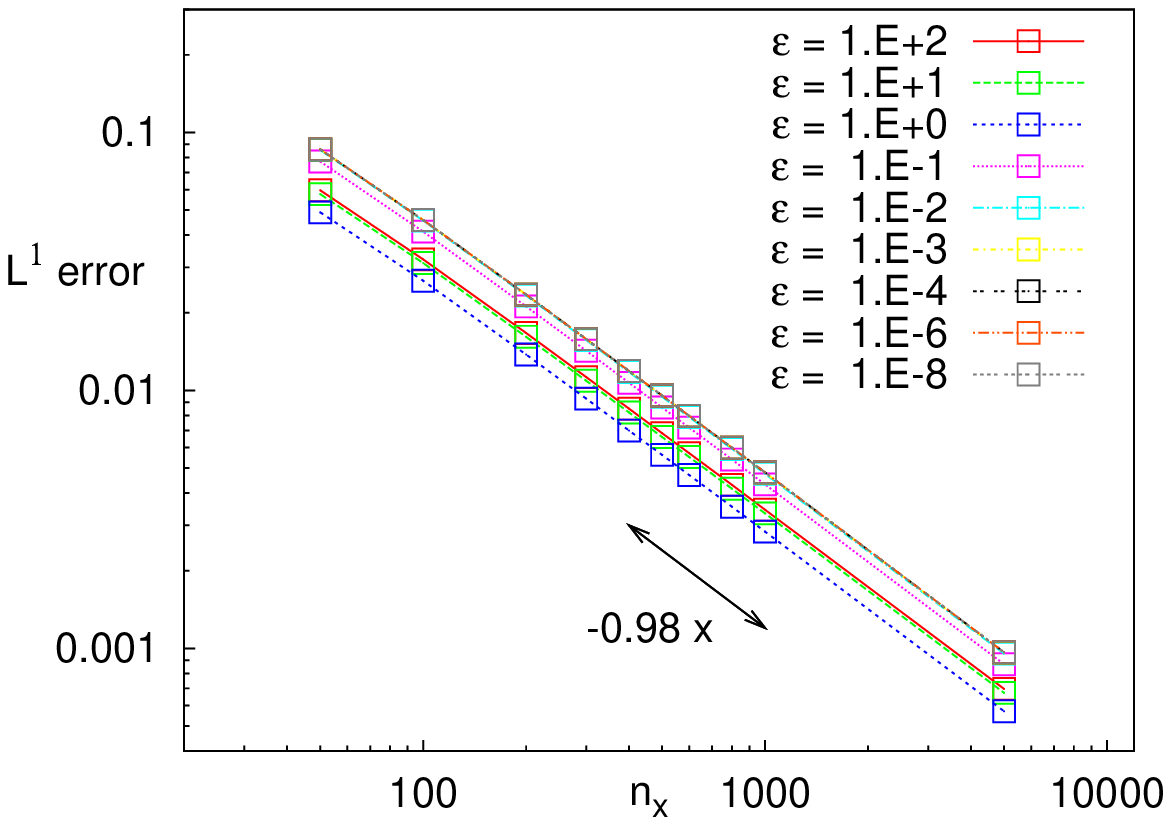}
  &
  \includegraphics[width =7.cm]{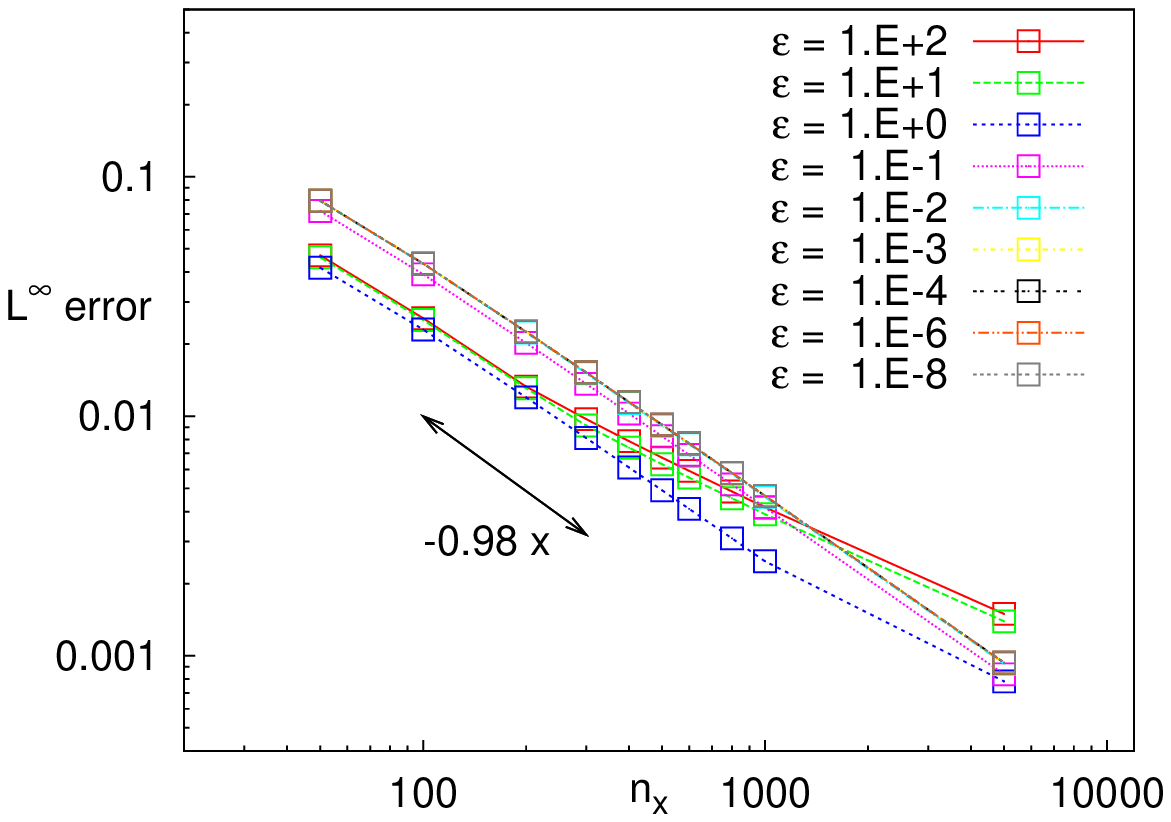}   
\\
$(a)$ & $(b)$
\end{tabular}
  \caption{Test 1 : Approximation of smooth solutions.  {\it (a) $L^1$ error and (b) $L^\infty$ error at time $t=1$ for different regimes with initial data (\ref{ini2}).}}
  \label{fig3a}
  \end{center}
 \end{figure}

On the other hand, we investigate numerically the long-time behavior of the solution to the Broadwell system (\ref{broadwell}). We consider $f = (f_+, f_0, f_-)^T$ a smooth solution to (\ref{broadwell}) with periodic boundary conditions in space. This solution is expected to converge to the global equilibrium $(\rho_g,m_g)$ as $t$ goes to $+ \infty$. In order to observe this trend to equilibrium, we investigate the behavior of the quantities
$$
\mathcal{S}_\rho(t) \,=\, \|\rho(t) - \rho_g\|_{1}, \quad \mathcal{S}_m(t) \,=\, \|m(t) - m_g\|_{1}, 
$$
where the initial data is a perturbation of this global equilibrium is $\rho_g = 1$, $m_g = 0$ and $z_g = A\left(\rho_g,m_g\right)$. In Figure \ref{fig4}, we present the time evolution of $\mathcal{S}_\rho(t)$, $\mathcal{S}_m(t)$ for the relaxation model and the hydrodynamic limit (\ref{euler}). Thus, we observe that the frequency of oscillations does not depend on $\eps$, whereas the  damping rate  is proportional to $\eps$. In other words, it appears that the trend to equilibrium is much faster (exponentially fast) in the rarefied regime, which corresponds to large values of $\eps>0$ than  in the hydrodynamic one. In the hydrodynamic regime $\eps=0$, the solution does not converge to the equilibrium since dissipative terms disappear. 
 \begin{center}
\begin{figure}[ht]
  \begin{tabular}{cc}
  \includegraphics[width = 7.cm]{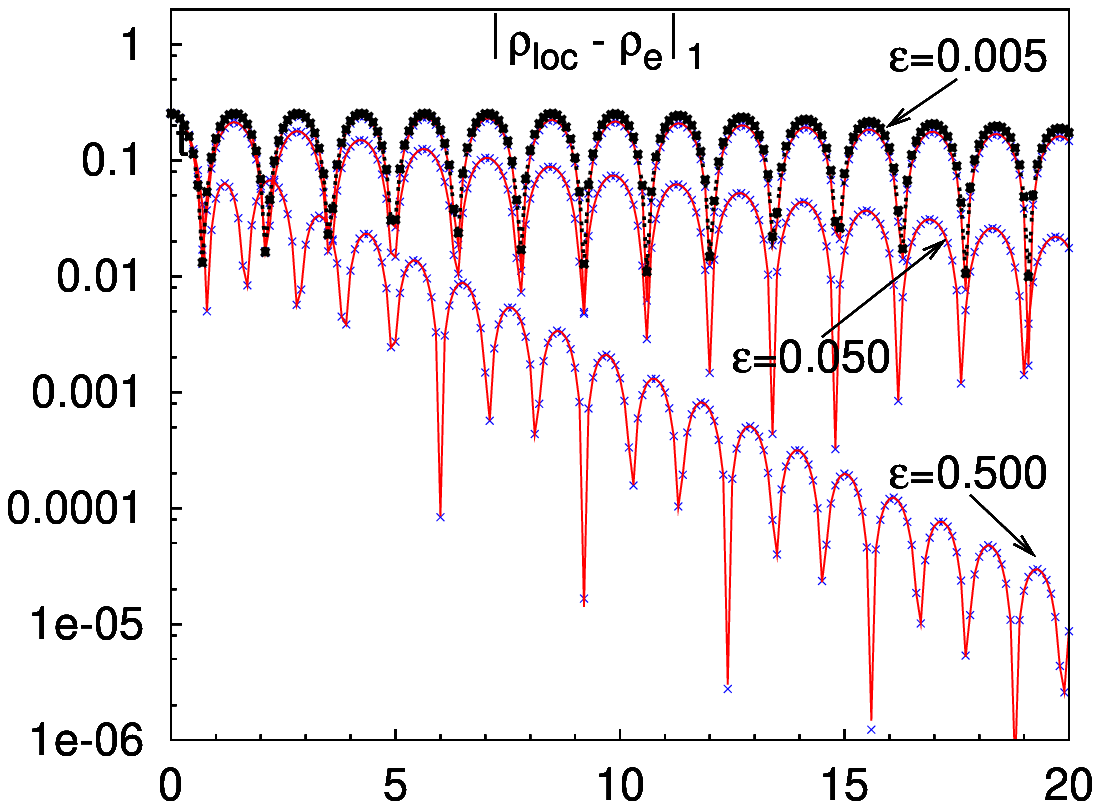}
  &
  \includegraphics[width = 7.cm]{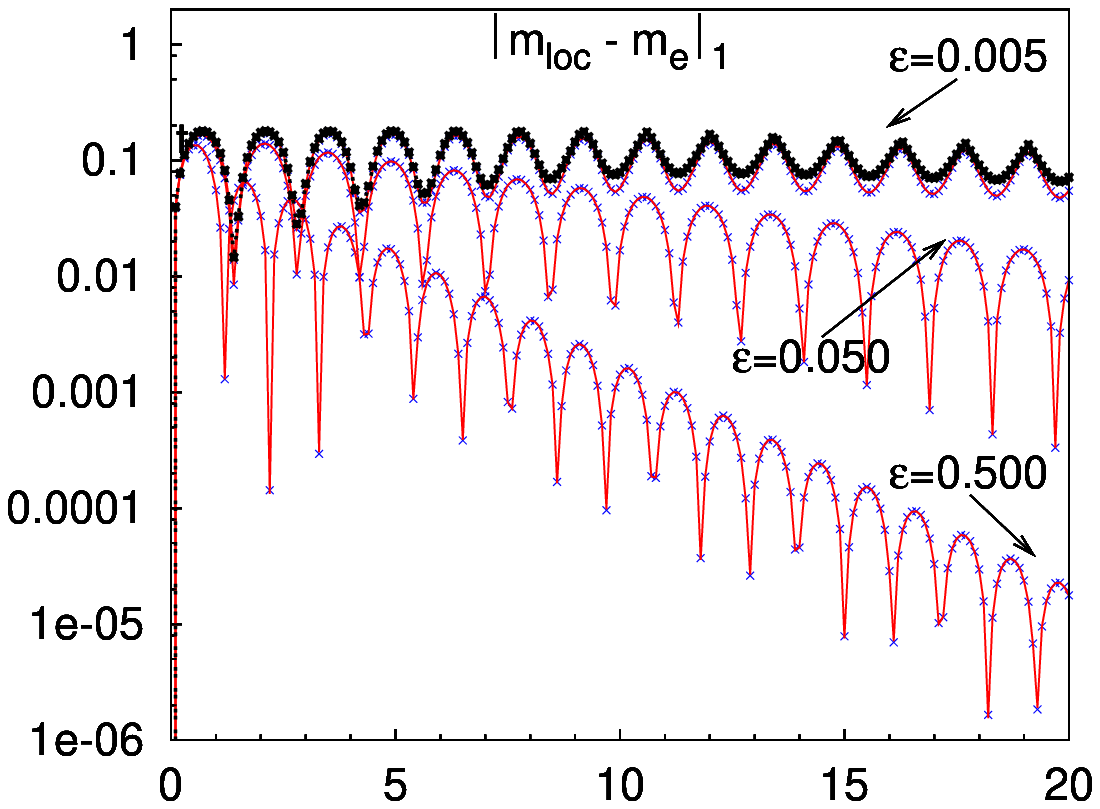} 
  \end{tabular}
  \caption{Test 1 : Approximation of smooth solutions. {\it Influence of the relaxation parameter $\eps$: evolution of $\mathcal{S}_\rho$ (left) and $\mathcal{S}_m$ (right) for $\eps =  0.5$, $0.05$, $0.005$ and comparison with the hydrodynamic limit (\ref{euler}) (dashed black line). }}
  \label{fig4}
 \end{figure}
 \end{center}

\paragraph{\it Test 2 : The Riemann problem.} We now consider a discontinuous initial datum and propose numerical simulations to (\ref{broadwell}) with different relaxation parameters $\eps$, from rarefied to fluid regimes. The initial condition is given by the local equilibrium 
\begin{equation}
\label{ini1}
  (\rho_0,m_0, z_0) = 
 \left\{
 \begin{array}{ll}
  (1, 0, 1), & {\rm if} \,-1 \leq x \leq 0,
 \\
 \,
 \\
 (0.25, 0, 0.125) ,& {\rm if }\, 0 < x \leq 1.
  \end{array}\right.
\end{equation}
 We approximate the Broadwell system in $(-1, 1)$, with reflecting boundary condition and a CFL number $\lambda = 0.9$ and present the evolution of the numerical solution. We only use $100$ grid points, so that the time step is fixed equal to $\Delta x=0.002$, and compare the results with the ones obtained from a fully explicit solver for which the time step has to be of the order of $\eps$ for different values of the relaxation parameter.

We first compute an approximation of the $L^p$ error with different meshes in space and time in order to evaluate the order of accuracy.  In Figure \ref{fig2}, we present the plots of $\mathcal{E}_1(h)$ and $\mathcal{E}_2(h)$  in log scale where $h=(\Delta x,\Delta t)$ is the discretization parameter. In that case, the order of accuracy is not fixed and depends on the different regimes, but in the worst case the numerical error  $\mathcal{E}_1(h)$ is of order $\sqrt{h}$, as in the  estimate (\ref{vitaly:01}). 

\begin{figure}[ht]
  \begin{center}
  \begin{tabular}{cc}
\includegraphics[width =7.cm]{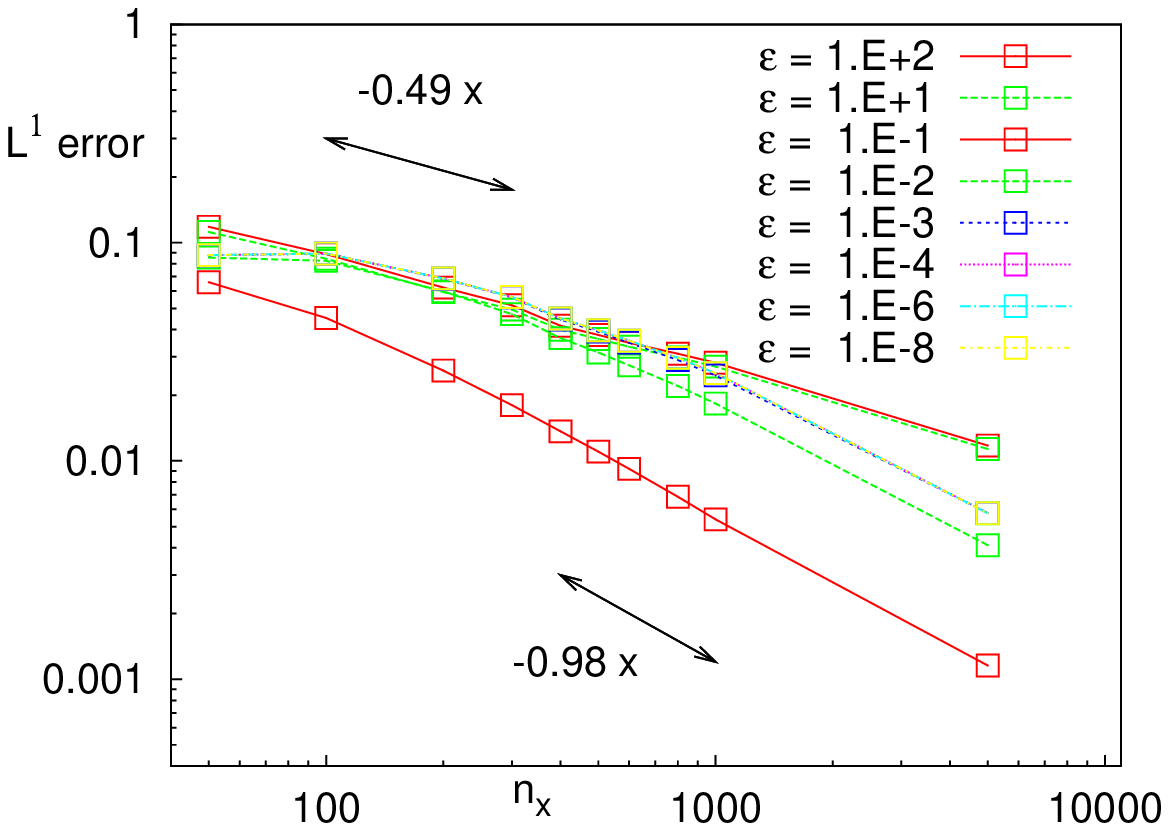}
  &
\includegraphics[width =7.cm]{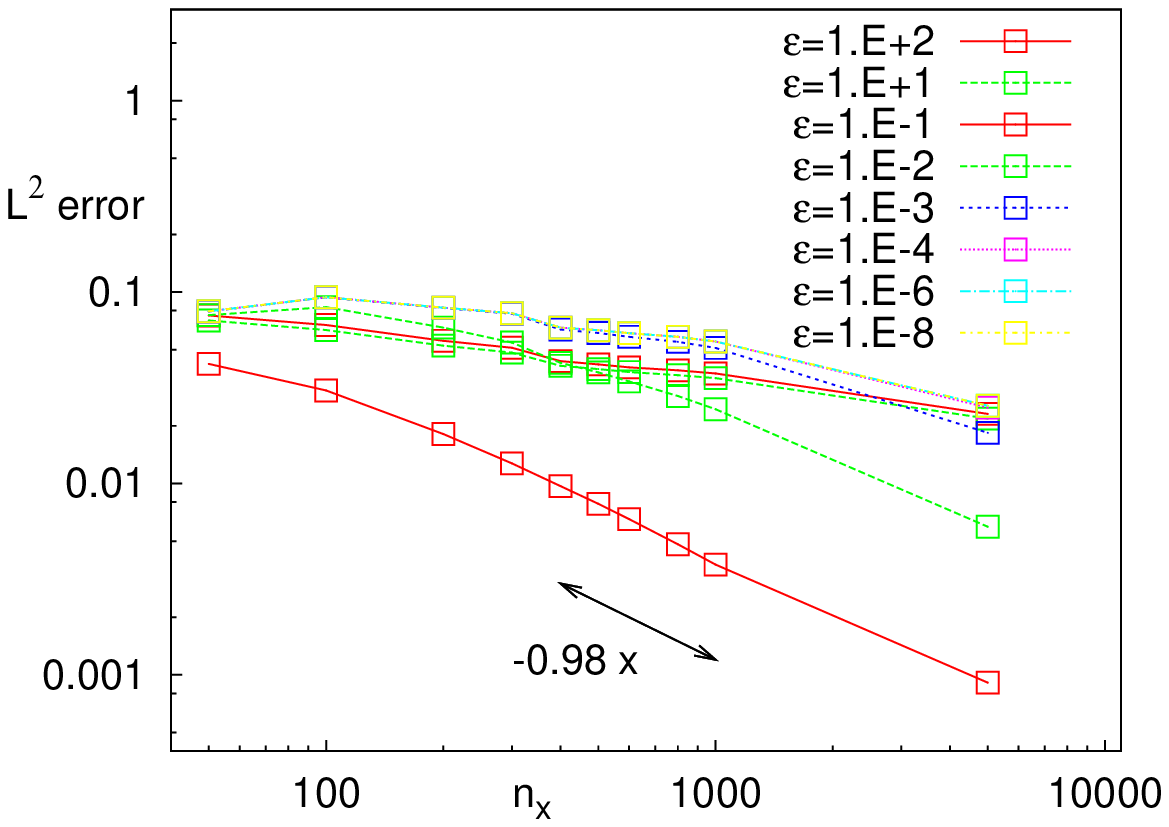}   
\\
$(a)$ & $(b)$
    \end{tabular}
  \caption{ Test 2 : The Riemann problem. {\it  (a) $L^1$ error and (b) $L^2$ error at time $t=1$ for different regimes, with initial data (\ref{ini1}). }}
  \label{fig2}
  \end{center}
 \end{figure}

In Figure \ref{fig1}, we take $\eps = 0.5$ and $0.1$. For such values of $\eps$, the problem is not stiff and this test is performed to compare the accuracy of our AP scheme with a fully explicit scheme (global Lax-Friedrichs method with slope limiters and explicit Euler discretization in time).
The density $\rho$, the momentum $(m,z)$, and the deviation to the equilibrium $z - A\left(\rho,m\right)$ are plotted at different times $t = 0.05$, $0.2$, $0.35$ and $0.5$. At the kinetic regime, we can observe that our method gives the same accuracy as a standard fully explicit scheme.

\begin{figure}[ht]
\begin{center}
\begin{tabular}{cc}
\includegraphics[width=7.cm]{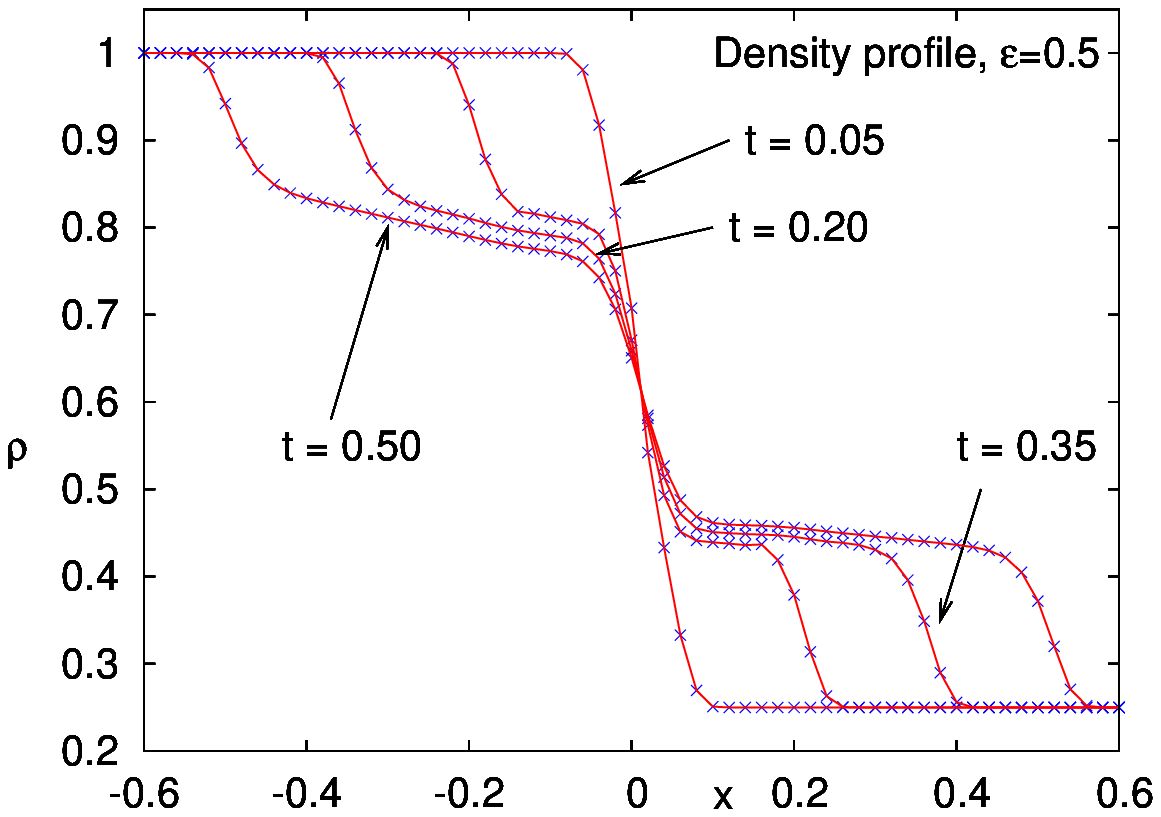}&
\includegraphics[width=7.cm]{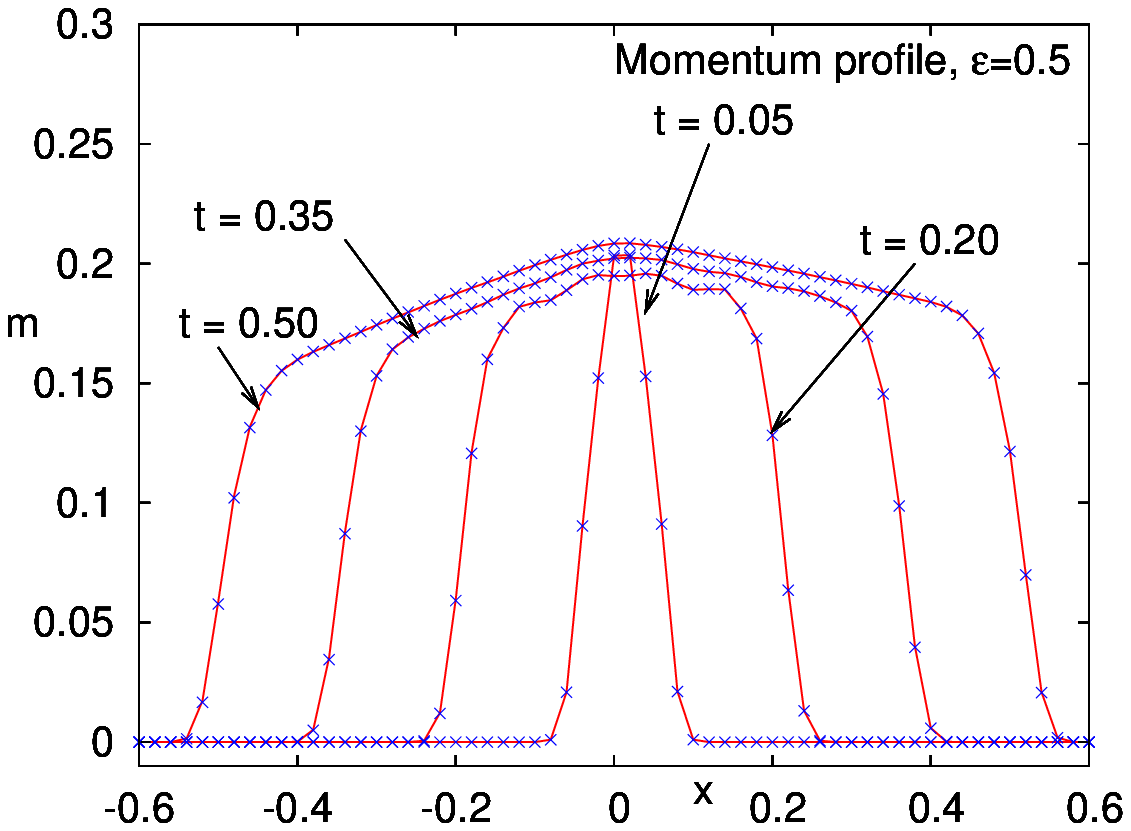} 
\\
\includegraphics[width=7.cm]{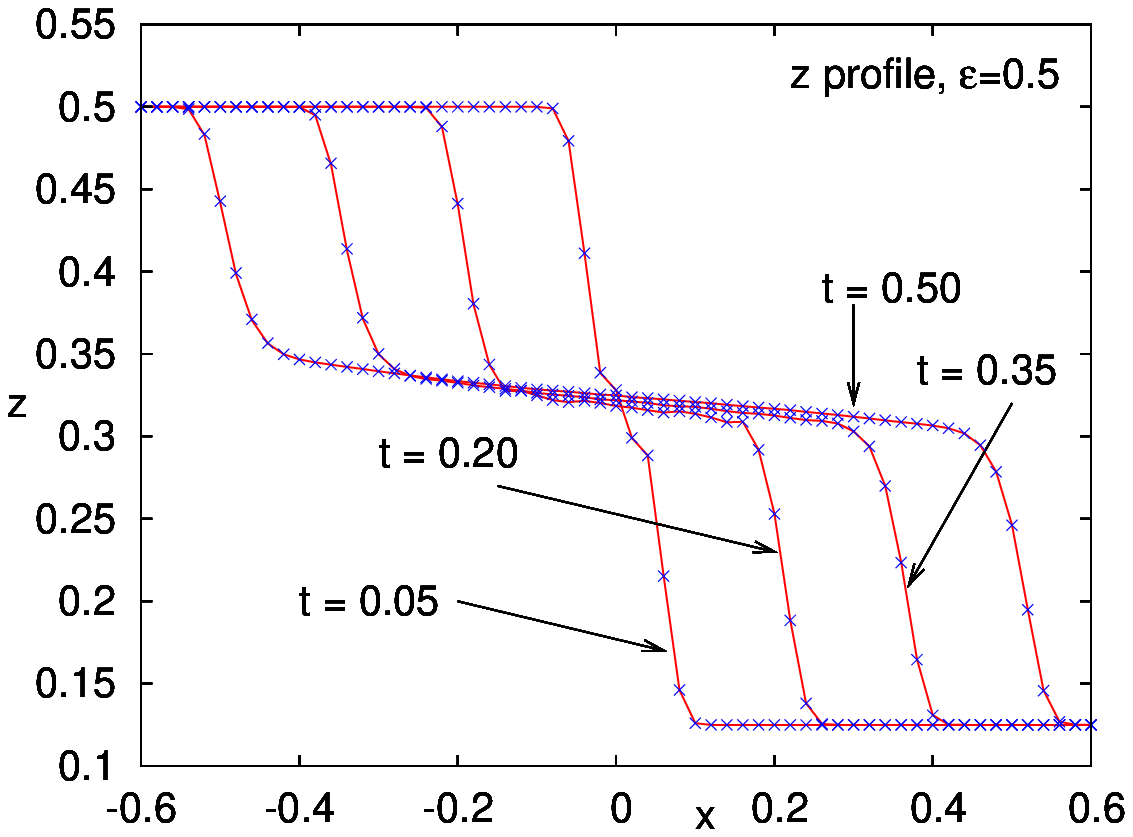}&
\includegraphics[width=7.cm]{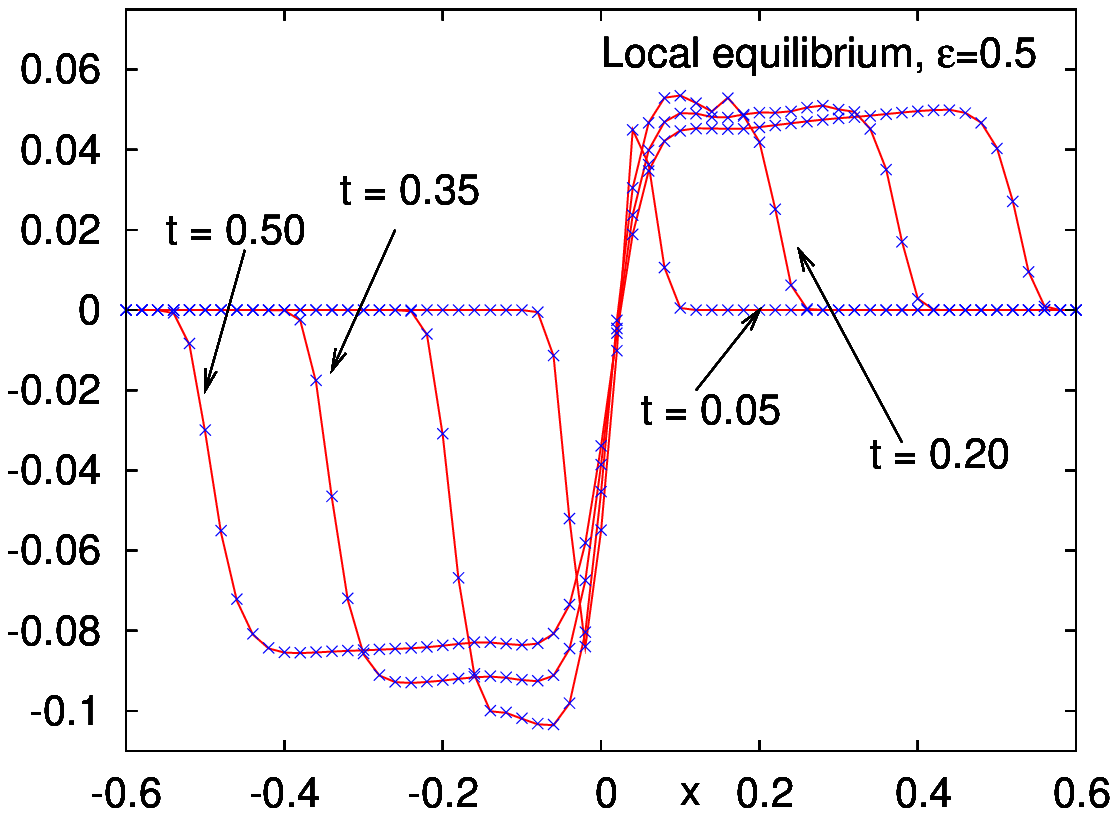}  
\end{tabular}
\caption{ Test 2 : The Riemann problem. {\it Solution to the AP scheme (crosses) and solution the explicit solver (lines) with initial data (\ref{ini1}) in the kinetic regime: $\eps = 0.5$ for the density $\rho$, the momentum $(m,z)$ and the deviation to the local equilibrium $z - A(\rho,m)$.}}
  \label{fig1}
  \end{center}
 \end{figure}

\begin{figure}[ht]
\begin{center}
\begin{tabular}{cc}
\includegraphics[width=7.cm]{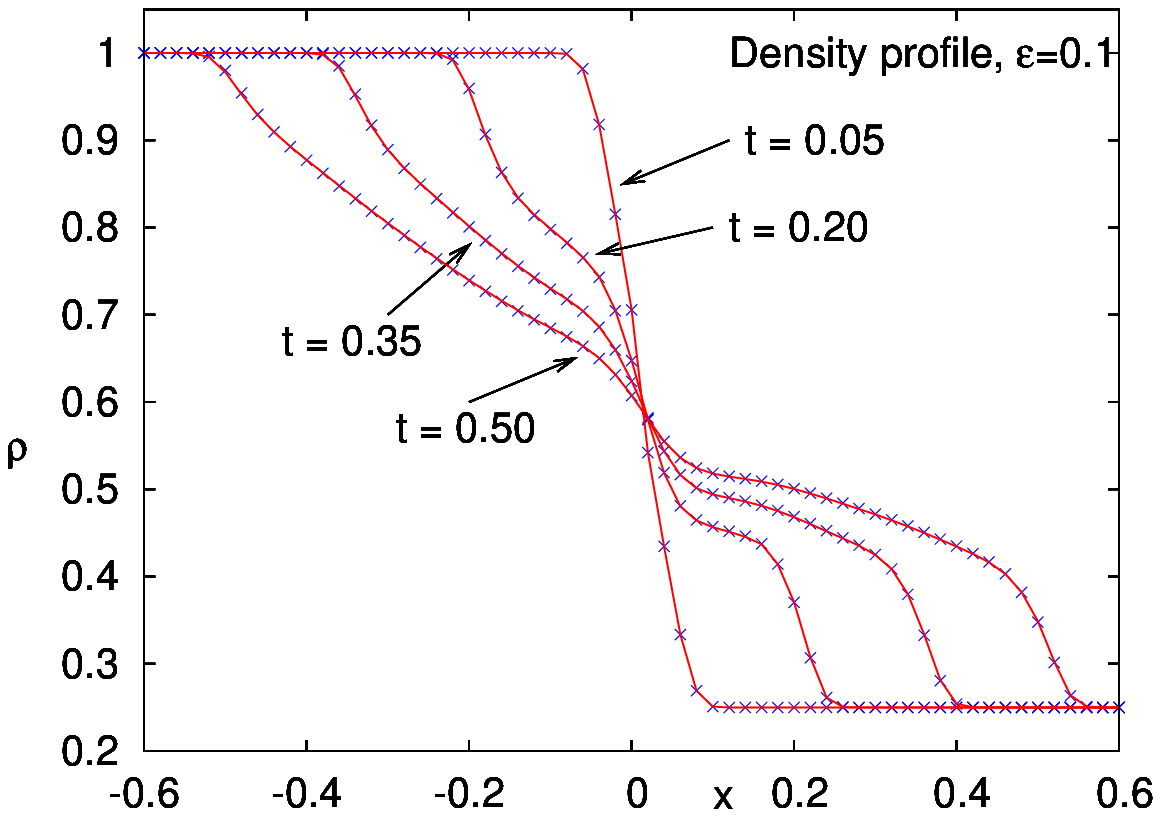}&
\includegraphics[width=7.cm]{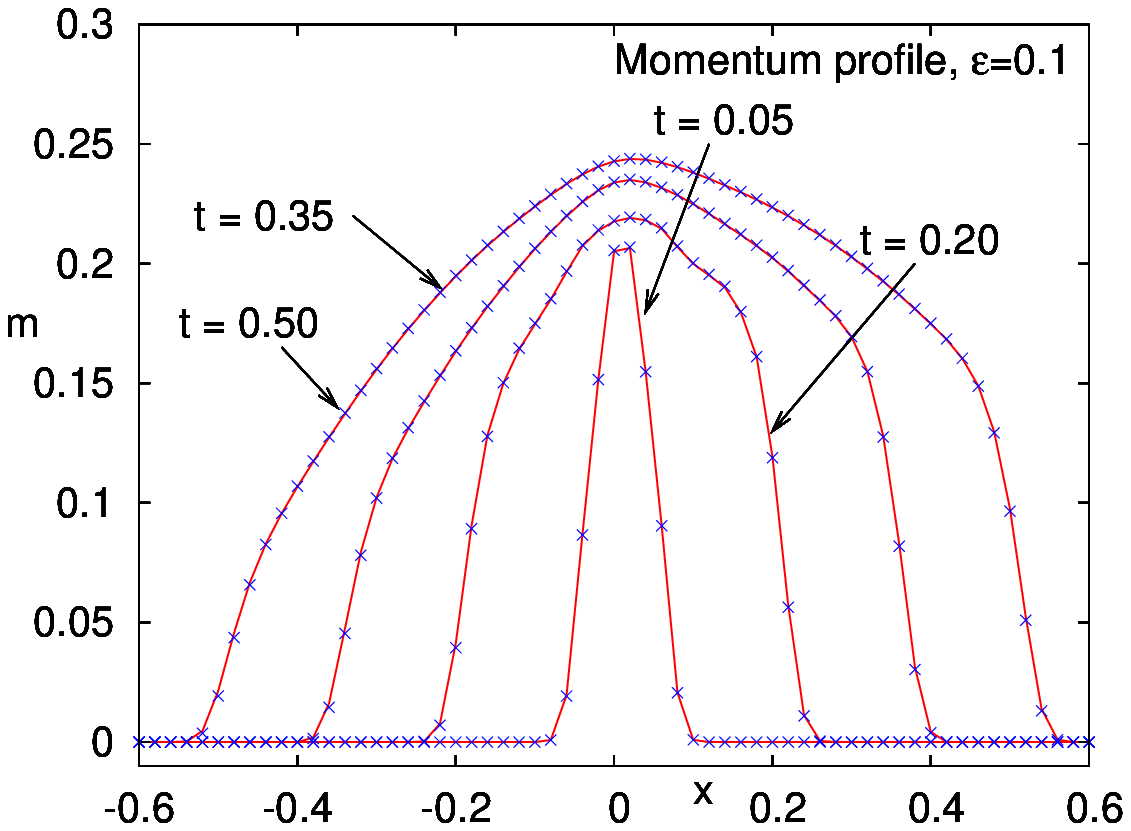} 
\\
\includegraphics[width=7.cm]{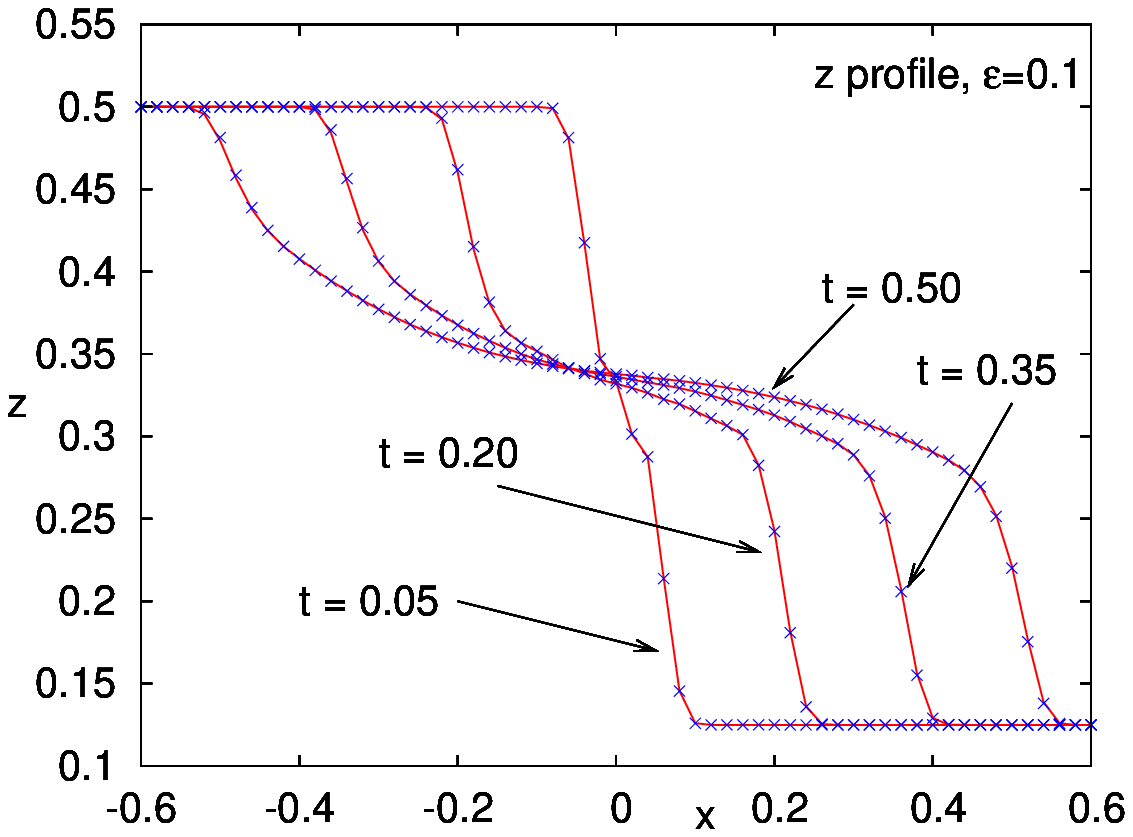}&
\includegraphics[width=7.cm]{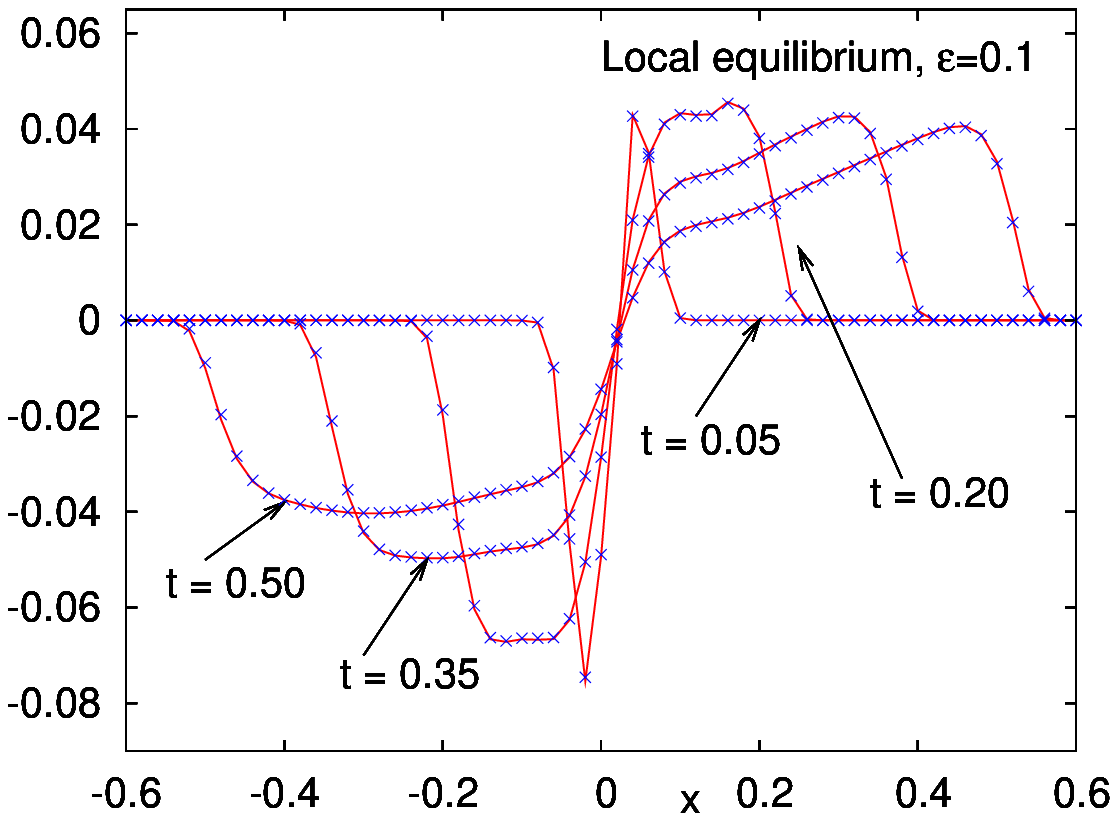}
\end{tabular}
\caption{ Test 2 : The Riemann problem. {\it Solution to the AP scheme (crosses) and solution the explicit solver (lines) with initial data (\ref{ini1}) in the kinetic regime: $\eps = 0.1$ for the density $\rho$, the momentum $(m,z)$ and the deviation to the local equilibrium $z - A(\rho,m)$.}}
  \label{fig1bis}
  \end{center}
 \end{figure}

Next we investigate the cases of small values of $\eps$. The same time step for the AP scheme is used, whereas the fully explicit scheme  requires it to be of order $O(\eps)$. We report the numerical results for $\eps = 10^{-2}$ and $\eps = 10^{-3}$ in Figure \ref{fig1bis}. In this case, we add in the comparison the numerical solution to the hydrodynamic limit  problem (\ref{euler}), obtained with a standard first order finite volume scheme.

\begin{figure}[ht]
\begin{center}
\begin{tabular}{cc}
\includegraphics[width=7.cm]{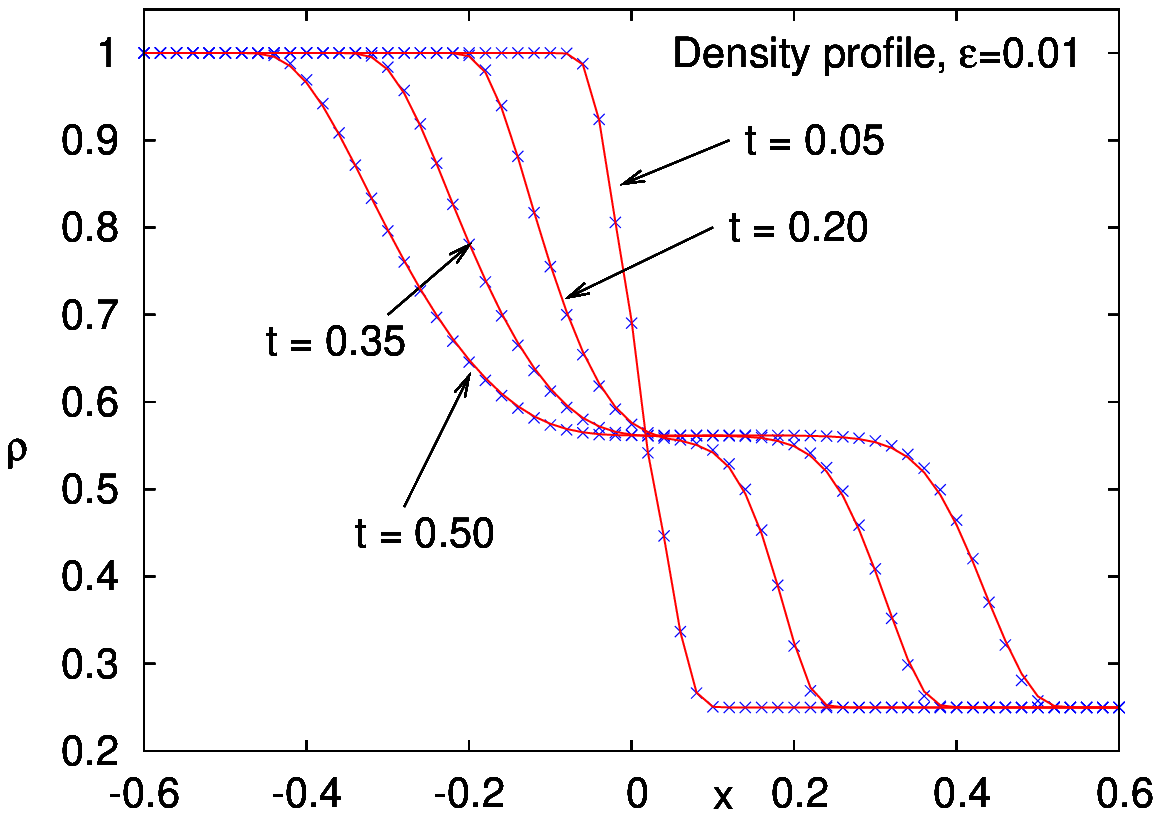}&
\includegraphics[width=7.cm]{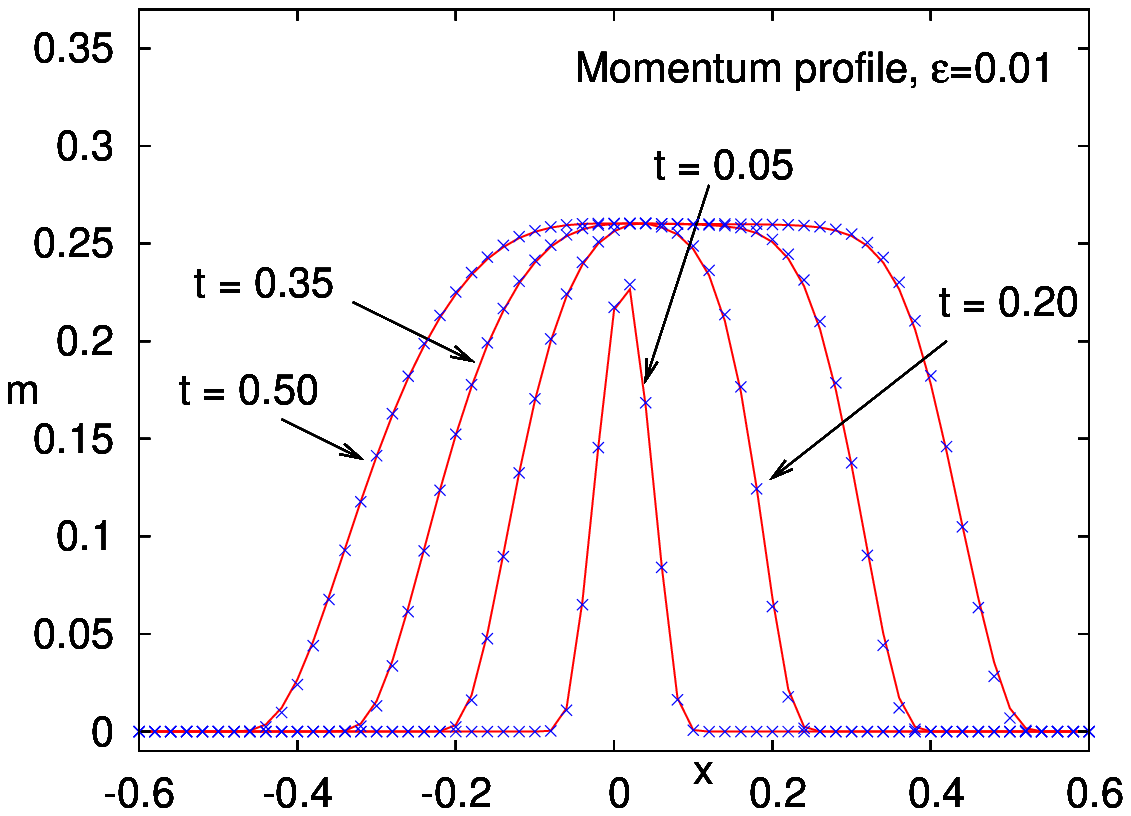}
\\ 
\includegraphics[width=7.cm]{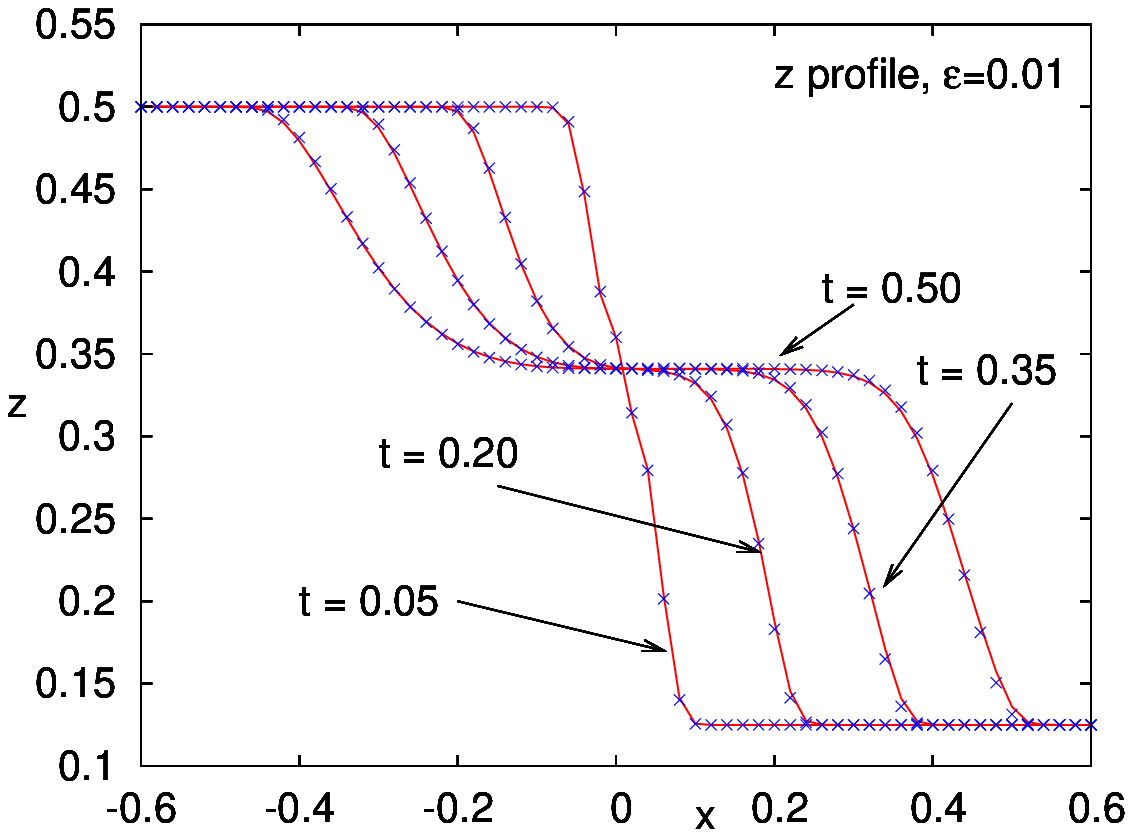}&
\includegraphics[width=7.cm]{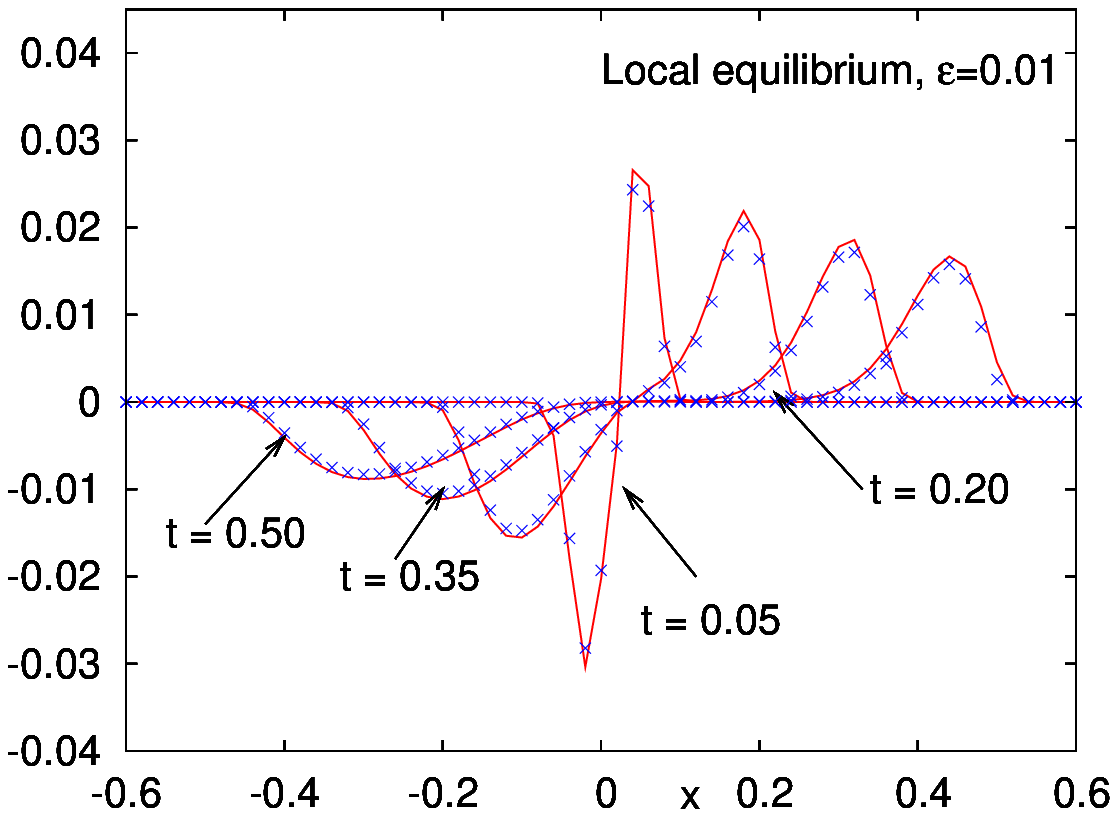}
\\
&\,
\\
&\,
\end{tabular}
\caption{ Test 2 : The Riemann problem.  {\it Solution to the AP scheme (crosses) and solution the explicit solver (lines) with initial data (\ref{ini1}) in the kinetic regime: $\eps = 0.01$ for the density $\rho$, the momentum $(m,z)$ and the deviation to the local equilibrium $z - A(\rho,m)$.}}
\label{fig1ter}
\end{center}
\end{figure}

\begin{figure}[ht]
\begin{center}
\begin{tabular}{cc}
\includegraphics[width=7.cm]{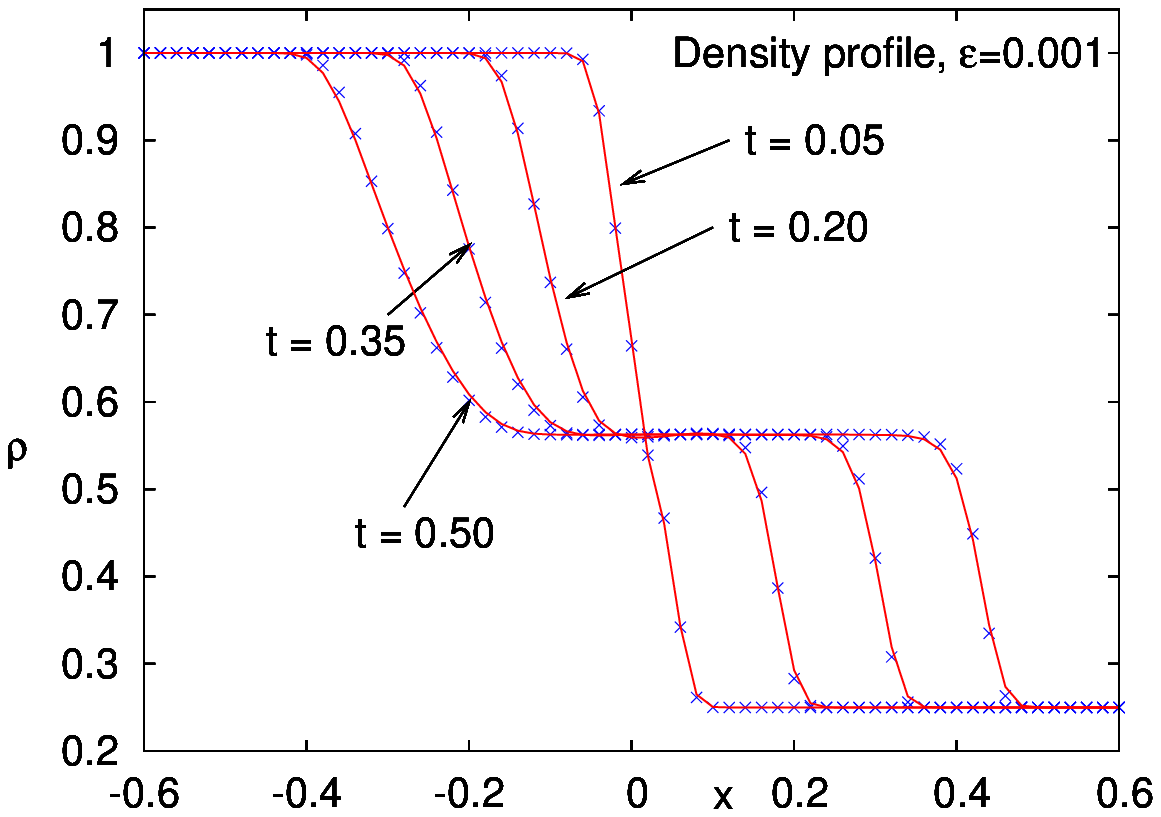}&
\includegraphics[width=7.cm]{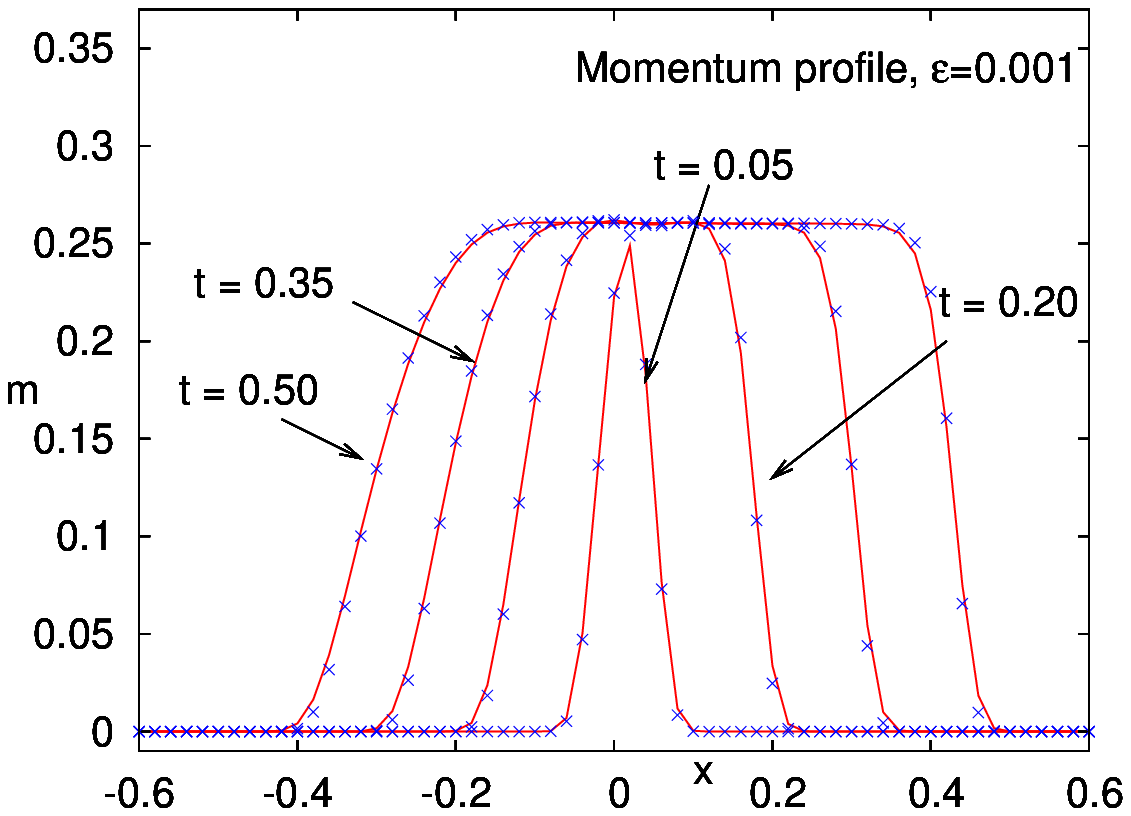} 
\\
\includegraphics[width=7.cm]{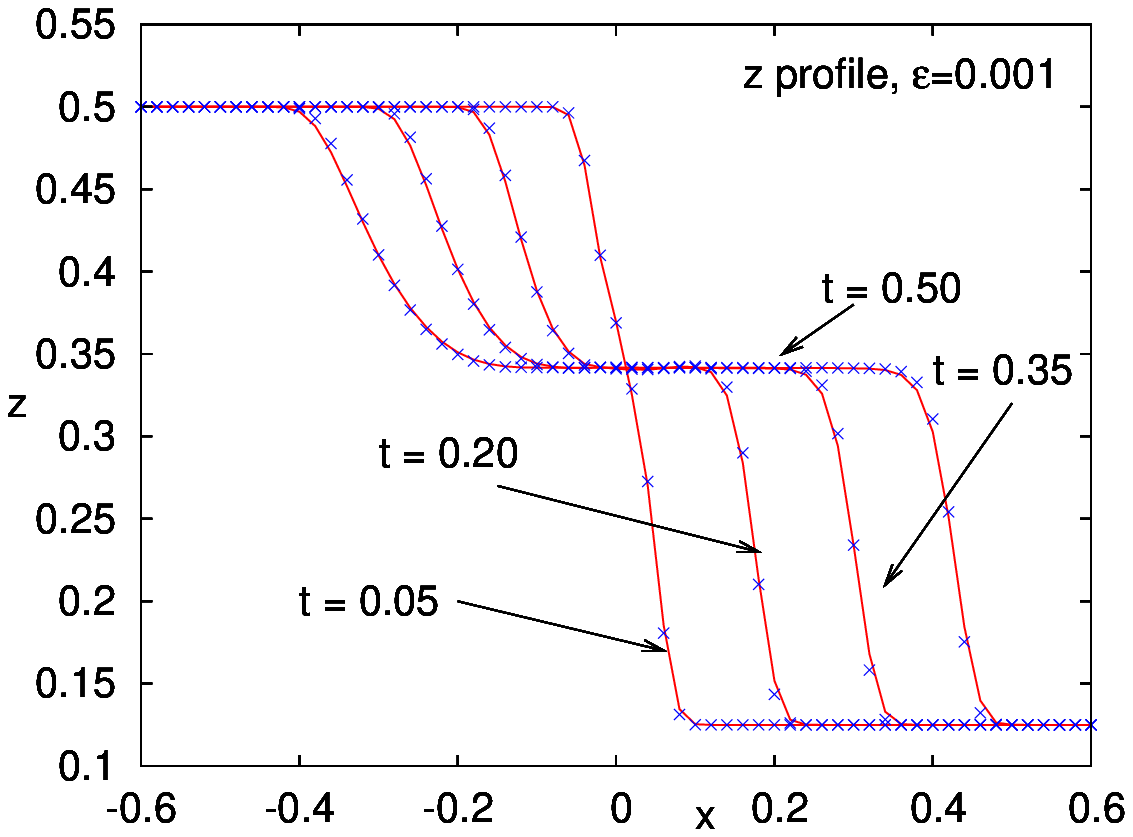}&
\includegraphics[width=7.cm]{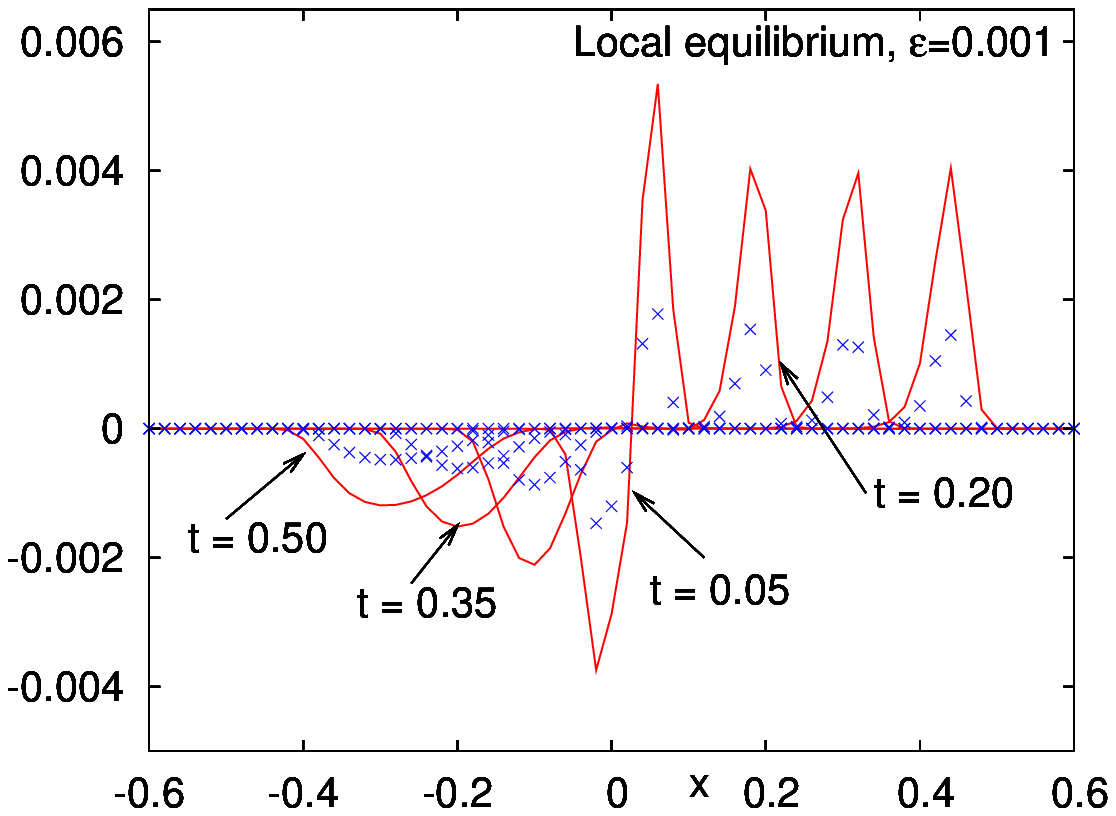}  
\end{tabular}
\caption{ Test 2 : The Riemann problem.  {\it Solution to the AP scheme (crosses) and solution the explicit solver (lines) with initial data (\ref{ini1}) in the kinetic regime: $\eps = 0.01$ for the density $\rho$, the momentum $(m,z)$ and the deviation to the local equilibrium $z - A(\rho,m)$.}}
\label{fig1quat}
\end{center}
\end{figure}

We observe that the AP scheme and the fully explicit scheme still agree, even if the time step is at least ten times larger with our method. Moreover, now that we are closer to the fluid regime, we see that the macroscopic quantities are in good agreement with the ones obtained with the limit problem. Yet some differences between the AP and the explicit schemes can be observed for $\eps = 10^{-3}$, this comes from the fact that we used a very small number of points for both discretizations.

\section{Conclusion}

In this paper we proposed a rigorous convergence proof of an asymptotic preserving  numerical scheme  applied to a system of transport equations with a nonlinear  and stiff source term, for which the asymptotic limit is given by a conservation laws. We have proved the convergence of the approximate solution $(\ueps_h,\veps_h)$ to a nonlinear relaxation system, where  $\eps>0$ is a physical parameter and $h$ represents the discretization parameter. Uniform convergence with respect to $\eps$ and $h$ is proved and error estimates are also obtained. This theoretical result allows justify the efficiency of this new scheme for multi-scale problems.


\begin{flushleft} 
\signff 
\end{flushleft}
\vspace{-4.25cm}
\begin{flushright} 
\signar
\end{flushright}

\end{document}